\theoremstyle{plain}
\newtheorem{thm}{Theorem}[section]
\newtheorem{lem}[thm]{Lemma}
\newtheorem{prop}[thm]{Proposition}
\newtheorem{theorem}[thm]{Theorem}
\theoremstyle{definition}
\newtheorem{definition}{Definition}[section]
\newtheorem{remark}[thm]{Remark}
\newtheorem{assumption}{Assumption}[section]
\theoremstyle{definition}
\newtheorem{defi}{Definition}[section]
\newcommand{\N}{\mathbb{N}}
\newcommand{\R}{\mathbb{R}}
\newcommand{\Z}{\mathbb{Z}}
\newcommand{\B}{\mathcal{B}}
\newcommand{\Fcal}{\mathcal{F}}
\newcommand{\e}{\mathrm{e}}
\newcommand{\ds}{\displaystyle}
\newcommand{\dd}{\mathrm{d}}
\title{Slowly oscillating periodic solutions in a nonlinear Volterra equation with non-symmetric feedback}
\author[1]{Quentin Griette}
\author[1,2]{Franco Herrera}
\affil[1]{\small LMAH, Université Le Havre Normandie, 76600, Le Havre, France; quentin.griette@univ-lehavre.fr; franco.herrera-granda@etu.univ-lehavre.fr}
\affil[2]{Instituto de Matemáticas, Universidad de Talca, Talca, Chile; franco.herrera@utalca.cl}
\date{\today}
\newcommand{\writefoot}[1]{
    \renewcommand{\thefootnote}{}
    \footnotetext{\hspace{-16.5pt}\scriptsize#1}
    \renewcommand{\thefootnote}{\arabic{footnote}}
}
\numberwithin{equation}{section}
\begin{document}

\maketitle

\writefoot{\small \textbf{AMS subject classifications (2020).} Primary: 34K13; Secondary: 45M15, 45D05, 35B10, 34K25, 34K26. \smallskip}
\writefoot{\small \textbf{Keywords.} Volterra equation,  delay-differential equation, slowly oscillating solution, singular perturbation
\smallskip
}
\writefoot{\small \textbf{Acknowledgements:} 
F.H. acknowledges the support of ANID-Subdirección de Capital Humano/Doctorado Nacional/2024-21240616.
Q.G. acknowledges the support of ANR grant  ``Indyana'' number  ANR-21-CE40-0008. 
F.H. and Q.G. acknowledge the support of the Math AmSud program for project 22-MATH-09 ``STEMDYNEPID''. 
}

\begin{abstract}
    In this work we study a nonlinear Volterra equation with non-symmetric feedback that arises as a particular case of the Gurtin-MacCamy model in population dynamics. We are particularly interested in the existence of  slowly oscillating periodic solutions when the trivial stationary state is unstable.  Here the absence of symmetry of the nonlinearity prevents the use of many traditional strategies to obtain a priori estimates on the solution. Without a precise knowledge of the period of the solution, we manage to prove the forward invariance of a carefully constructed set of initial data  whose properties imply the slowly oscillating character of all continuations. We prove the existence of periodic solutions by constructing a homeomorphism between our set and a convex subset of a different Banach space,  thereby showing that it possesses the fixed-point property.  Finally, in a singular limit of a parameter, we show that this periodic solution converges to the solution of a well-known discrete difference equation. We conclude the paper with some numerical simulations to illustrate the existence of the periodic orbit as well as the singular limit behavior.
\end{abstract}

\section{Introduction}

In this work we are interested in the existence of slowly oscillating solutions for the Volterra-type integral equation
\begin{equation}\label{eq:prob}
    b(t) = \frac{1}{\varepsilon} \int_{1-\varepsilon/2}^{1+\varepsilon/2} f\big(b(t-s)\big)\dd s = \frac{1}{\varepsilon}\int_{t-1-\varepsilon/2}^{t-1+\varepsilon/2} f\big(b(s)\big)\dd s,
\end{equation}
where $\varepsilon\in(0,1)$ is a scaling parameter and $f$ is a negative feedback nonlinearity, i.e. a continuous function satisfying $xf(x)<0$ (see Figure \ref{fig:1} below). 

This equation  arises naturally as a particular case of the  Gurtin-MacCamy equation
\begin{equation}\label{eq:Gurtin-MacCamy}
	\begin{cases}
        (\partial_t + \partial_a)u(t,a) = -\mu u(t,a), & t>0,~ a>0 \\
        u(t,0) = f\left( \int_0^\infty \gamma(a)u(t,a) \dd a \right), & t>0, \quad u(0,\cdot)=u_0\in L^1_+(\R_+), 
\end{cases}
\end{equation}
that describes a theoretical population structured by chronological age. Equation \eqref{eq:Gurtin-MacCamy} has been proposed by Gurtin and MacCamy \cite{Gurtin-MacCamy-1974} and has been extensively studied since then, see e.g. Webb \cite{Webb-1985}, Thieme \cite{Thieme-2003}, Inaba \cite{Inaba-2017}, Magal and Ruan \cite{Magal-Ruan-2018}, and the references therein. Recently,  Ma and Magal \cite{MMa} studied the global asymptotic stability of the unique positive stationary solution of the model \eqref{eq:Gurtin-MacCamy} with a Ricker's-type nonlinearity $f(u)= \alpha ue^{-u}$. They showed that the global asymptotic stability holds whenever $1<\alpha<e^2$ (the stationary solution does not exists for $\alpha<1$) and that a Hopf bifurcation may occur at $\alpha=\alpha^\star>e^2$ depending on the function $\gamma$; and for each $\alpha^\star>e^2$ they provide a $\gamma$ for which a Hopf bifurcation effectively occurs at $\alpha=\alpha^\star$. By reducing \eqref{eq:Gurtin-MacCamy} to a nonlinear convolution equation such as \eqref{eq:prob}, Herrera and Trofimchuk \cite{HT} proved the global asymptotic stability of the unique equilibrium under more generic assumptions on $f$, namely that $f$ is unimodal function with a unique nontrivial equilibrium. In that work, the authors showed that the asymptotic stability can be obtained just from the analysis of the one-dimensional map associated to the equation, as requiring $f$ to have a negative Schwarzian derivative, which are independent of the kernel $\gamma$ involved, so they can be called \emph{absolute convergence conditions}; as well as other conditions related to particular forms of the kernel. We explain the link between \eqref{eq:prob} and \eqref{eq:Gurtin-MacCamy} in Section \ref{sec:gurtin}. In this work, we are not concerned with the global asymptotic stability of the stationary state in \eqref{eq:prob}, but with the existence of periodic solutions of maximal period, whose existence is suggested by the result of Ma and Magal \cite{MMa} at least in some cases.

Equation \eqref{eq:prob} is also an instance of nonlinear Volterra equation that are also interesting on their own. These equations, that can be written as  
\begin{equation}\label{eq:Volterra}
    b(t) = \int_0^{+\infty} \beta(a) f(b(t-s)) \dd s
\end{equation}
for a kernel $\beta$ normalized by $\int_0^{+\infty} \beta(s)\dd s=1$, have indeed attracted an independent interest, see e.g. \cite{asselah,Diekmann,CDM,DiekmannGils,Nussbaum} or \cite{N1979} for an integro-differential version; we also refer to \cite{MalletNussbaum, Magal-Ruan-2018} and the encyclopedia \cite{GripenbergLonden} for further references and historical notes.
\medskip

Slowly oscillating solutions to delay differential equations are particular solutions that have a minimal period that is somehow maximal with respect to the considered problem. This notions can be tracked back to the late '70s with the works of Kaplan and Yorke '75 \cite{Kaplan-Yorke-1975}, Nussbaum '79 \cite{Nussbaum-1979}, Walther '79 and '81\cite{Walther-1981}. For a fixed delay, slowly oscillating solutions are solutions with the property that the distance between two consecutive zeros is greater than the delay. 
For delay-differential equations that are more complex, this notions has to be adapted to the specific equation considered. In the context of delay differential equations with dynamic delays (the delay being a part of the differential equation), Arino, Hadeler and Hbid \cite{Arino} proposed the following definition: the distance between the left-hand side of two consecutive intervals on which the solution is identically zero, has to be no less than the maximal value that can be taken by the delay.  Such a strong definition would correspond in our case to solutions for which the distance between two zeros is no less than $1+\varepsilon/2$; however it can be proved that such solutions do not exist for \eqref{eq:prob}.
In this work we will call \textit{slowly oscillating solution} a solution that changes sign only once on any interval of size $1-\varepsilon/2$ (see the precise statement in Definition \ref{def:slowly-oscillating}).
Remark that the notion of slowly oscillating \textit{solution} should not be confused with that of slowly oscillating \textit{function}, which is a completely different notion independent of delay differential equations.

To the extent of our knowledge, the existence of slowly oscillating solutions for nonlinear Volterra equations such as \eqref{eq:prob} with negative feedback has only been established under strong symmetry assumptions on both the function $f$ and the solution. 
Nussbaum \cite{Nussbaum-1978} proved in 1978 that, under some conditions, nonlinear delay-differential equations with several delays may possess slowly oscillating solutions under the assumption that all nonlinearities are \textit{odd}; in our context, this amounts to assuming that the function $f$ in \eqref{eq:prob} is odd.    
Chow, Diekmann and Mallet-Paret \cite{CDM}  proved the existence and uniqueness of periodic solutions of fixed period 2, more precisely of sine-type solutions. 
More recently, Breda et al. \cite{Diekmann} showed, by means of numerical tools, the existence of a bifurcation in this type of renewal equations and also the existence of periodic solutions. Kennedy \cite{Kennedy} proved the existence of periodic solutions for a slight generalization of \eqref{eq:prob} under symmetric conditions for $f$.

In this work, we present a result of existence for a periodic, slowly oscillating  orbit of \eqref{eq:prob} without any symmetric condition over $f$ by means of a fixed-point-argument. Our argument is inspired by Arino, Hadeler and Hbid \cite{Arino}, although the method we use turns out to be quite different. The core of our argument is the study of the first-return or Poincaré map for the equation acting on an appropriate set of initial data that is consistent with slowly oscillating solutions. 
We chose to work with the set $\mathcal{B}^\alpha$ of functions over $[-1-\varepsilon/2, 0]$ that are bounded by a constant,  positive on a large interval and change sign exactly once between $-\varepsilon$ and $0$ in a non-degenerate manner (see the precise definition in Assumption \ref{as:Balpha}). 
This set shares some similarities with a cone but fails to be convex, which complicates the application of fixed-point theorems (in particular, we cannot apply the Schauder fixed-point Theorem directly).
We first prove that the Poincaré map is well-defined, continuous and leaves $\mathcal{B}^\alpha$ positively invariant by the Poincaré map. A key argument in our proof is the comparison with an eigenvector of the linearised equation of \eqref{eq:prob} far away from the zeros of the solution, which is made possible by the fact that the integral operator is monotone whenever the solution does not change sign in the whole domain of integration; and  when it does, a steepness argument shows that the solution is allowed to cross any eigenvector only once. 
In the process we show that  any initial data in $\mathcal{B}^\alpha$ yields a slowly oscillating solution of \eqref{eq:prob}. 
Then, we prove that our set $\mathcal{B}^\alpha$ possesses the fixed point property (see \cite{GD}) by constructing a homeomorphism to a convex subset of a different Banach space. This finishes the proof of existence of a slowly oscillating periodic solution of \eqref{eq:prob}.
\medskip

We also consider the singular limit of the constructed periodic orbit of \eqref{eq:prob} for a non-trivial subset of admissible $f$ and show that, when $\varepsilon\to 0$, it converges to a solution of the limit discrete-difference equation 
\begin{equation} \label{eq:difference}
    b(t) = f\big(b(t-1)\big) 
\end{equation}
that is slowly oscillating (see Ivanov and Sharkovsky \cite{Ivanov-Sharkovsky-1992} for detailed results on the solutions of \eqref{eq:difference}).

The paper is organized as follows. In Section \ref{sec:main-results} we present our main assumptions and results. Section \ref{sec:eigenvalue} gives a treatment of the linearized equation around 0 and the existence of the principal eigenvalue in the space of 2-periodic sine-type functions. In Section \ref{sec:space} we present a first result concerned to the norm-preserving property of the extension for any initial data. Section \ref{sec:Poincare} is devoted to the definition and main properties of the Poincaré map, such as its continuity and compactness. In section \ref{sec:Invariance} we present some a priori estimates for the continuation of an initial data in the phase space, as well as its connection with a convex set in a different Banach space. Section \ref{sec:proofs} is devoted to the proofs of our mains results. In Section \ref{sec:gurtin} we show how our result can be fitted in the setting of the Gurtin-MacCamy population model. Finally, in Section \ref{sec:simulations} we illustrate our results by means of numerical simulations where some phenomena can be observed, such as the Gibbs phenomenon in the non-monotone case.

\section{Main results}
\label{sec:main-results}

We start by our requirement concerning the function $f$ (see Figure  \ref{fig:1} for an illustration).
\begin{assumption}\label{as:f}
    $f\colon \R\to\R$ is a continuous function with $f'(0)<-1$, and satisfies the negative-feedback condition $xf(x)<0$. We let $\kappa_1$ and $\kappa_2$ be respectively the first negative and first positive solution of the equation $f(x)=-x$:
    \begin{align*}
	\kappa_1&:=\inf\{x<0\,:\, f(z)>-z, \quad  \,^\forall z\in [x, 0)\}, \\ 
	\kappa_2&:=\sup\{x>0\,:\, f(z)<-z,\quad  \,^\forall z\in (0, x]\}. 
    \end{align*}
    Furthermore, the following conditions hold
\[
    \limsup_{x\to +\infty} f(x)<0<\liminf_{x\to-\infty} f(x), \quad \text{and} \quad \liminf_{x\to \pm \infty} \frac{f(x)}{x} > -1.
\]
\end{assumption}

\begin{figure}[t]
    \centering
    \begin{tikzpicture}[scale=0.7]
    \draw[->] (-5,0) -- (5,0) node [anchor=west] {$x$};
    \draw[->] (0,-3) -- (0,4) node [anchor=south] {$y$};
    \draw (-4,4) -- (3,-3) node [anchor=north] {$y=-x$};
    \draw (-5,3.4) .. controls (-4,3.3) and (-3.5,3.15) .. (-3,3) .. controls (-2,2.67) and (-1,2) .. (0,0) .. controls (0.5,-1.5) and (1.5,-1.75) .. (2,-2) .. controls (3,-2.5) and (5,-2.8) .. (5,-2.83) node [anchor=west] {$y=f(x)$};
    \draw[dashed] (2,-2) -- (2,0) node [anchor=south] {$\kappa_2$};
    \draw[dashed] (-3,3) -- (-3,0) node [anchor=north] {$\kappa_1$};
    \end{tikzpicture}
    \caption{Graph of the function $f\colon \R \to \R$ with $\kappa_1<0<\kappa_2$.}
    \label{fig:1}
\end{figure}

Let us make precise the notion of solution that we will use in our study.  Given $b\in C([-1-\varepsilon/2,0])$ (the \textit{initial data}) it is possible to construct its continuation, namely $x_b\colon [-1-\varepsilon/2,+\infty) \to\R$, obeying the rule given by the equation, that is
\begin{equation}\label{eq:xb}
    x_b(t) = \begin{cases}
        b(t), & t\in[-1-\varepsilon/2, 0), \\
        \frac{1}{\varepsilon}\int_{1-\varepsilon/2}^{1+\varepsilon/2} f(x_b(t-s)) \dd s, & t\geq 0.
    \end{cases}
\end{equation}
Note that we do not impose the continuity of $x_b$ and in fact, $x_b(t)$ may possess a jump discontinuity at $t=0$. However, clearly, the restriction of $x_b$ to $[-1-\varepsilon/2, 0)$ and $[0, +\infty)$ are continuous. Moreover, $x_b$ is differentiable at all points $t\geq 0$ but $t\in\{0, 1-\varepsilon/2, 1+ \varepsilon/2 \}$, and the derivative is given by
\[
    x_b'(t) = \frac{1}{\varepsilon} [ f(x_b(t-1+\varepsilon/2)) - f(x_b(t-1-\varepsilon/2)) ].
\]

Next we define the notion of \textit{slowly oscillating solution} (see \cite[Definition 3.8]{Arino}).
\begin{definition}[Slowly oscillating solution]\label{def:slowly-oscillating}
    Let $t_0\in [-\infty, +\infty)$ be given and  $b:[t_0, +\infty)\to \mathbb{R}$ satisfy \eqref{eq:prob} for all $t\geq t_0+1+\frac{\varepsilon}{2}$. We will say that  $b(t)$ is a \textit{slowly oscillating solution} of \eqref{eq:prob} if the set $\{t\,:b(t)\neq 0\}$ can be written as a succession of nonempty open intervals, the distance between the left end of two consecutive intervals is no less than $1-\frac{\varepsilon}{2}$, and $b$ has different signs at each consecutive interval.  
\end{definition}

Before we can state our first result we need to describe the set of initial data that we will use in our main argument. First of all, consider the Banach space
\begin{align*}\label{def:C-0}
    C_0([a,b]) := \{ u\in C([a,b]) \colon u(a)=0 \}.
\end{align*}
We will define the set $\mathcal{B}^\alpha$ as  the set of continuous functions 
 $b\in C_0([-1-\varepsilon/2,0])$ 
 that satisfy the following assumption with $\varepsilon\in(0,1/4)$ and $\alpha>0$  given (since $\varepsilon$ is fixed we omit it in the notation of $\mathcal{B}^\alpha$). Importantly, we define
 \begin{equation} \label{eq:princ-eigvec}
     \varphi_0^\tau(t):=\sin\big(\pi(t+1+\tau)\big).
 \end{equation}
 The use of this function $\varphi_0$ as a lower barrier will be justified by the fact that $\varphi_0$ is a principal eigenvector of the linearised equation near $b(t)\equiv 0$.

Since the Assumption \ref{as:Balpha} is somewhat complex, we illustrate the two principal cases in Figure \ref{fig:B}.
\begin{assumption}\label{as:Balpha}
Let $b\in C_0([-1-\varepsilon/2,0])$. There exists a unique $\tau\in (0,\varepsilon]$ such that
    \begin{enumerate}[label={\roman*)}]
    \item $b(-\tau)=0$.
    \item The following bounds hold:
    \begin{align*}
        b(t)&\geq \gamma_\tau(t), \quad\forall t\in[-1-\varepsilon/2,-\tau], \\
        b(t)&\leq \alpha \varphi_0^{\tau}(t), \quad \forall t\in[-\tau,0],
    \end{align*}
    where $\varphi_0^\tau$ is defined by \eqref{eq:princ-eigvec} and the function $\gamma_\tau$ is a barrier of eigenfunctions depending on the values of $\varepsilon$ and $\tau$. More precisely, if $\tau\in(0,\varepsilon/2]$,
    \[
        \gamma_\tau(t) = \max_{\hat{\tau}\in[\tau,\varepsilon/2]} \alpha \varphi_0^{\hat{\tau}}(t);
    \]
    while if $\tau\in(\varepsilon/2,\varepsilon]$,
    \[
        \gamma_\tau(t) = \begin{cases}
            \displaystyle \max_{\hat{\tau}\in\{\tau,\varepsilon/2\}} \alpha\varphi_0^{\hat{\tau}}(t), & t\in[\tau^*-\varepsilon/2, \tau^*+ \varepsilon/2], \\
            \displaystyle (1-\Phi_{\tau^*-\varepsilon,\tau^*-\varepsilon/2}(t)) \varphi_0^{\varepsilon/2}(t) + \Phi_{\tau^*-\varepsilon,\tau^*-\varepsilon/2}(t) \varphi_0^\tau(t), & t\in(-1-\varepsilon/2,-\tau^*-\varepsilon/2), \\
            \displaystyle (1- \Phi_{\tau^*+\varepsilon/2,\tau^*+ \varepsilon}(t) ) \varphi_0^{\varepsilon/2}(t) + \Phi_{\tau^*+\varepsilon/2,\tau^* +\varepsilon}(t) \varphi_0^\tau(t), & t\in(\tau^* + \varepsilon/2, -\tau),
        \end{cases}
    \]
    where $\Phi_{a,b}(t)$, $a<b$, is a smooth non-decreasing function which satisfies
    \[
        0\leq \Phi_{a,b} \leq 1, \quad \Phi_{a,b}(t)=0, \quad t\leq a, \quad \Phi_{a,b}(t) = 1, \quad t\geq b;
    \]
     and $\tau^*$ is defined as the point at which $\varphi_0^{\varepsilon/2}$ and $\varphi_0^\tau$ intersects in $(-1-\varepsilon/2,-\tau)$, namely
    \[
        \tau^* = -\frac{1}{2}-\left( \frac{\varepsilon}{4} + \frac{\tau}{2} \right).
    \]
    \end{enumerate}
\end{assumption}
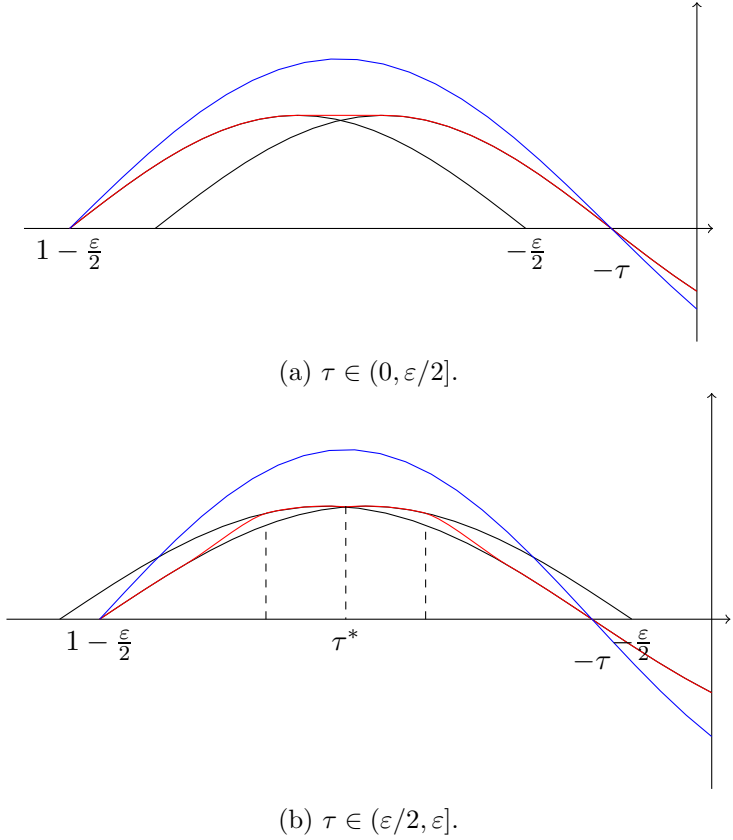
\begin{figure}[t]
    \centering
    \begin{subfigure}{\textwidth}
    \centering
    \begin{tikzpicture}[xscale=6, yscale=3]
        \foreach \e/\a in {0.75 / 0.5}
        {
            \draw[->] (-1-\e/2-0.1,0) -- (1pt,0);
            \draw[->] (0,-\a) -- (0,2*\a);
            \draw[domain=-1-\e/2:-\e/2] plot (\x, {\a*sin(pi*(1+\e/2+\x) r)});
            \draw[domain=-1-\e/4:0] plot (\x, {\a*sin(pi*(1+\e/4+\x) r)});
            \draw[color=red, domain=-1-\e/2:-1/2-\e/2] plot (\x, {\a*sin(pi*(1+\e/2+\x) r)});
            \draw[color=red] (-1/2-\e/2,\a) -- (-1/2-\e/4,\a);
            \draw[color=red, domain=-1/2-\e/4:0] plot (\x, {\a*sin(pi*(1+\e/4+\x) r)});
            \draw[color=blue,domain=-1-\e/2:0] plot (\x, {1.5*\a*sin(pi/(1+\e/4)*(1+\e/2+\x  ) r)});
            \draw (-1-\e/2,0) node [anchor=north] {$1-\frac{\varepsilon}{2}$};
            \draw (-\e/2,0) node [anchor=north] {$-\frac{\varepsilon}{2}$};
            \draw (-\e/4,-0.1) node [anchor=north] {$-\tau$};
        }
    \end{tikzpicture}
    \caption{$\tau\in(0,\varepsilon/2]$.}
    \end{subfigure}
    \\
    \begin{subfigure}{\textwidth}
    \centering
    \begin{tikzpicture}[xscale=7, yscale=3]
        \foreach \e/\a in {0.3 / 0.5}
        {
            \draw[->] (-1-3/4*\e-0.1,0) -- (1pt,0);
            \draw[->] (0,-1.5*\a) -- (0,2*\a);
            \draw[domain=-1-\e/2:-\e/2] plot (\x, {\a*sin(pi*(1+\e/2+\x) r)});
            \draw[domain=-1-3/4*\e:0] plot (\x, {\a*sin(pi*(1+3/4*\e+\x) r)});
            \draw[color=red, domain=-1-\e/2:-1/2-13/8*\e] plot (\x, {\a*sin(pi*(1+\e/2+\x) r)});
            \draw[color=red] (-1/2-13/8*\e,0.49*\a) .. controls (-1/2-12/8*\e,0.59*\a) and (-1/2-10/8*\e,0.89*\a) .. (-1/2-9/8*\e,0.935*\a);
            \draw[color=red, domain=-1/2-9/8*\e:-1/2-5/8*\e] plot (\x, {\a*sin(pi*(1+3/4*\e+\x) r)});
            \draw[color=red, domain=-1/2-5/8*\e:-1/2-1/8*\e] plot (\x, {\a*sin(pi*(1+\e/2+\x) r)});
            \draw[color=red] (-1/2-1/8*\e,0.935*\a) .. controls (-1/2,0.89*\a) and (-1/2+2/8*\e,0.59*\a) .. (-1/2+3/8*\e,0.49*\a);
            \draw[color=red, domain=-1/2 + 3/8*\e:0] plot (\x, {\a*sin(pi*(1+3/4*\e+\x) r)});
            \draw[dashed] (-1/2-5/8*\e, \a) -- (-1/2-5/8*\e,0) node [anchor=north] {$\tau^*$};
            \draw[color=blue, domain=-1-\e/2:0] plot (\x, {1.5*\a*sin(pi/(1-\e/4)*(1+\e/2+\x  ) r)});
            \draw[dashed] (-1/2-9/8*\e,0) -- (-1/2-9/8*\e,0.8*\a);
            \draw[dashed] (-1/2-1/8*\e,0) -- (-1/2-1/8*\e, 0.8*\a);
            \draw (-1-\e/2,0) node [anchor=north] {$1-\frac{\varepsilon}{2}$};
            \draw (-\e/2,0) node [anchor=north] {$-\frac{\varepsilon}{2}$};
            \draw (-3/4*\e,-0.1) node [anchor=north] {$-\tau$};
        }
    \end{tikzpicture}
    \caption{$\tau\in(\varepsilon/2,\varepsilon]$.}
    \end{subfigure}
    \caption{Typical shape of the functions (in blue) which belong to $\B^\alpha$. The barrier of eigenfunction is plotted in red. (a) for $\tau\in(0, \varepsilon/2]$. (b) for $\tau\in (\varepsilon/2, \varepsilon]$.}
    \label{fig:B}
\end{figure}

Now we define $\B^\alpha = \{ b\in C_0([-1-\varepsilon/2,0]) \colon \text{Assumption \ref{as:Balpha} holds} \}$. That is, the functions which belong to this set maintain above and below some barrier of shifts of the eigenfunction for the linear problem, $\varphi_0$. We provide an illustration in Figure \ref{fig:B}.
\begin{remark}
Even though the definition of $\gamma_\tau$ for $\tau\in(\varepsilon/2,\varepsilon]$ is quite complex, we mainly use the following property: 
\[
    \gamma_\tau(t) \geq \begin{cases}
    \displaystyle
    \min_{\hat{\tau}\in\{\tau,\varepsilon/2\}} \alpha\varphi_0^{\hat{\tau}}(t), & t\in(-1-\varepsilon/2,-\tau)\setminus[\tau^*-\varepsilon/2,\tau^*+\varepsilon/2], \\
    \displaystyle \max_{\hat{\tau}\in\{\tau,\varepsilon/2\}} \alpha\varphi_0^{\hat{\tau}}(t), & t\in[\tau^*-\varepsilon/2,\tau^*+\varepsilon/2].
    \end{cases}
\]
Actually, our definition of $\gamma_\tau$ aims at correcting the discontinuity of the function defined on the right-hand side of the above inequality.
\end{remark}

We are now in a position to state our main results.  The first two concern the existence of slowly oscillating solutions of \eqref{eq:prob}. 
\begin{theorem}\label{thm:slowly-oscillating-extension}
	Let $\varepsilon\in (0, 1/4)$ and $\alpha>0$ be given. If $b\in \mathcal{B}^\alpha$ then its continuation $x_b(t)$, defined by \eqref{eq:xb}, is slowly oscillating. 
\end{theorem}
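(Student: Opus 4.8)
The plan is to pin down the zero set of $x_b$ on $[-1-\varepsilon/2,0]$ directly from Assumption~\ref{as:Balpha}, then to propagate it forward one ``half period'' at a time using three elementary observations about \eqref{eq:xb}: (a) since $uf(u)<0$, the sign of $x_b(t)$ is opposite to the common sign of $x_b$ on the window $[t-1-\varepsilon/2,\,t-1+\varepsilon/2]$ whenever $x_b$ does not change sign there; (b) from $x_b'(t)=\tfrac1\varepsilon\big[f\big(x_b(t-1+\varepsilon/2)\big)-f\big(x_b(t-1-\varepsilon/2)\big)\big]$, when the window straddles exactly one sign--changing zero of $x_b$ the two endpoint values of $x_b$ have opposite signs, so $x_b'(t)$ keeps a fixed sign and $x_b$ is strictly monotone across that zero (the ``steepness''); and (c) the shifted sines $\alpha\varphi_0^{\sigma}$ of \eqref{eq:princ-eigvec} serve as barriers — on windows of constant sign the operator $T[u](t)=\tfrac1\varepsilon\int_{t-1-\varepsilon/2}^{t-1+\varepsilon/2}f\big(u(s)\big)\,\dd s$ is order--reversing, and by (b) the solution may cross a given barrier only once near each zero. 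A preliminary uniform bound on $x_b$, coming from $\limsup_{x\to+\infty}f(x)<0<\liminf_{x\to-\infty}f(x)$ and $\liminf_{|x|\to\infty}f(x)/x>-1$, ensures only a fixed bounded range of $f$ is ever used.

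\emph{Base step.} Let $b\in\mathcal B^{\alpha}$ with associated $\tau\in(0,\varepsilon]$. From Assumption~\ref{as:Balpha} and $\varepsilon<1/4$: $b(-1-\varepsilon/2)=b(-\tau)=0$, $b\ge\gamma_\tau>0$ on $(-1-\varepsilon/2,-\tau)$ (one checks $\gamma_\tau>0$ on the open interval from its definition, resp.\ from the lower bound in the Remark above), and $b\le\alpha\varphi_0^{\tau}<0$ on $(-\tau,0]$ (since $\pi(t+1+\tau)\in(\pi,\pi(1+\tau)]\subset(\pi,2\pi)$). So on $[-1-\varepsilon/2,0]$ the open set $\{x_b\ne0\}$ is $(-1-\varepsilon/2,-\tau)\cup(-\tau,0]$, with signs $+$ then $-$, and its two left endpoints are $1+\varepsilon/2-\tau\ge1-\varepsilon/2$ apart. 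For $t\in[0,1-\varepsilon/2-\tau]$ the window lies in $[-1-\varepsilon/2,-\tau]$, where $b>0$ a.e., so $x_b(t)<0$ by (a); thus $x_b<0$ on $(-\tau,1-\varepsilon/2-\tau]$, an interval of length exactly $1-\varepsilon/2$. For $t\in(1-\varepsilon/2-\tau,\,1+\varepsilon/2-\tau)$ the window straddles only the zero $-\tau$ (here $\varepsilon<2/3$ keeps the window's right part inside $(-\tau,1-\varepsilon/2-\tau]$): $x_b>0$ on its left part and $x_b<0$ on its right part, so $x_b'(t)>0$ by (b), while $x_b(1-\varepsilon/2-\tau)<0$ and $x_b(1+\varepsilon/2-\tau)>0$ by (a); hence $x_b$ has a unique, sign--changing zero $z_1\in(1-\varepsilon/2-\tau,\,1+\varepsilon/2-\tau)$, in particular $z_1+\tau>1-\varepsilon/2$.

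\emph{Propagation and conclusion.} To iterate, the efficient route is to establish that the ``half--return'' preserves $\mathcal B^{\alpha}$: namely, that the function $\tilde b(t):=-x_b(t+1+\varepsilon/2-\tau)$, $t\in[-1-\varepsilon/2,0]$, again satisfies Assumption~\ref{as:Balpha}, with new parameter $\tau_1=1+\varepsilon/2-\tau-z_1\in(0,\varepsilon)$. The base step already gives $\tilde b$ the correct sign pattern and the correct zeros; the substance of the forward--invariance result of the later sections is precisely that the two--sided sandwich $\gamma_\tau\le b\le\alpha\varphi_0^{\tau}$ is reproduced, up to sign, by $\tilde b$. Granting this, applying the base step to $\tilde b$, then to its own half--return, and so on, yields by induction that the successive zeros $\zeta_0<\zeta_1<\zeta_2<\cdots$ of $x_b$ (with $\zeta_0=-1-\varepsilon/2$, $\zeta_1=-\tau$, $\zeta_2=z_1$, \dots) are all isolated and sign--changing, alternate in sign, and satisfy $\zeta_{k+1}-\zeta_k\ge1-\varepsilon/2$; therefore $\{x_b\ne0\}=\bigcup_{k\ge0}(\zeta_k,\zeta_{k+1})$ has exactly the structure required by Definition~\ref{def:slowly-oscillating}, so $x_b$ is slowly oscillating. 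The main obstacle is this invariance step: since $f$ need not be monotone, $\gamma_\tau\le b\le\alpha\varphi_0^{\tau}$ cannot be propagated by a naive comparison principle. One must argue separately in the regime where the window sits entirely in $\{x_b>0\}$ or $\{x_b<0\}$ — where $T$ is genuinely order--reversing, so a barrier goes to a barrier — and in a neighbourhood of each zero, where the strict monotonicity of (b) forbids $x_b$ from re--crossing a fixed $\alpha\varphi_0^{\sigma}$; the elaborate family $\gamma_\tau$ and the cut--offs $\Phi_{a,b}$ of Assumption~\ref{as:Balpha} are tailored so that these two local descriptions glue into a single continuous barrier that survives a half--return. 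Carrying out this comparison is the technical heart of the argument.
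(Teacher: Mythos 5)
Your base step is correct and is essentially the paper's own argument (Remark \ref{rem:boundontau} together with the steepness observation behind Proposition \ref{deriv_x}): on windows where $x_b$ has constant sign the negative feedback fixes the sign of the integral, and on the window straddling $-\tau$ the derivative $x_b'$ keeps a fixed sign, so there is a unique, sign-changing zero $z_1\in(1-\tau-\varepsilon/2,\,1-\tau+\varepsilon/2)$.

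The propagation step, however, contains a genuine gap and is routed through the wrong result. First, you explicitly grant the forward invariance of $\mathcal B^{\alpha}$ under the half-return (``Granting this\dots'', ``the technical heart of the argument'') without proving it, so as written the proof is incomplete. Second, that invariance is neither needed nor available in the stated generality: the barrier estimates (Propositions \ref{prop:estimates1}--\ref{prop:estimates2} and Lemma \ref{lem:invariance_B}) require $f'(0)<-2$ and smallness of $\alpha$, $\delta_0$, $\varepsilon$, whereas Theorem \ref{thm:slowly-oscillating-extension} is asserted for every $\alpha>0$ and every $\varepsilon\in(0,1/4)$ with only $f'(0)<-1$; a proof that passes through the invariance would prove strictly less than the theorem claims. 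The correct iteration — and the one the paper performs — transports only the sign pattern: having shown $x_b<0$ on $(-\tau,z_1)$, one gets $x_b>0$ on $(z_1,\,z_1+1-\varepsilon/2]$ because for such $t$ the window $[t-1-\varepsilon/2,\,t-1+\varepsilon/2]$ lies in $[-\tau,z_1]$, and then your observations (a) and (b), applied verbatim on $(z_1+1-\varepsilon/2,\,z_1+1+\varepsilon/2)$, give $x_b'<0$ there and opposite signs at the two endpoints, hence a unique sign-changing zero $z_2$ with $z_2-z_1\ge 1-\varepsilon/2$; induction on $j$ finishes the proof with no barrier comparison at all. Finally, note that your half-return $\tilde b=-x_b(\cdot+1+\varepsilon/2-\tau)$ is not a trajectory of \eqref{eq:xb}: its continuation obeys the equation with $f$ replaced by $y\mapsto-f(-y)$, which coincides with $f$ only when $f$ is odd — precisely the hypothesis the paper is designed to avoid. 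The sign-pattern argument is insensitive to this replacement, which is one more reason to iterate on signs rather than on membership in $\mathcal B^{\alpha}$; alternatively, use the full return $x_b(z_1(b)+1+\varepsilon/2+\cdot)$ as in \eqref{eq:poincare_operator}.
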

The proof of Theorem $\ref{thm:slowly-oscillating-extension}$ is based on establishing bounds for the positive zeros of the continuation $x_b$ and check that at each of these points it changes its sign.
\begin{theorem}\label{thm:existence-solutions}
    Let $f$ satisfy Assumption \ref{as:f} and suppose that $f'(0)<-2$.  Let $\varepsilon_0>0$ be the smallest solution of the equation 
    \begin{equation*}
	\dfrac{\sin\left(\frac{\pi\varepsilon}{2}\right)}{\frac{\pi\varepsilon}{2}} = -\frac{2}{f'(0)}. 
    \end{equation*}
    For each $\varepsilon\leq\varepsilon_0$, there exists a periodic, slowly oscillating solution of \eqref{eq:prob}. 
\end{theorem}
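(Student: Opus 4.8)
The plan is to realise the desired orbit as a fixed point of the Poincaré (first-return) map $P$ acting on the set $\mathcal B^\alpha$, and then to read off the slowly oscillating character from Theorem~\ref{thm:slowly-oscillating-extension}. The first step is to translate the hypotheses into the ``right amount of instability'' of the null solution. A direct computation (the content of Section~\ref{sec:eigenvalue}) shows that the linearisation of \eqref{eq:prob} at $b\equiv 0$, i.e. the operator $b\mapsto f'(0)\,\tfrac1\varepsilon\int_{1-\varepsilon/2}^{1+\varepsilon/2}b(\cdot-s)\,\dd s$, maps $\varphi_0^\tau$ (any shift) to $\lambda_0\,\varphi_0^\tau$ with $\lambda_0:=-f'(0)\,\frac{\sin(\pi\varepsilon/2)}{\pi\varepsilon/2}$; since $x\mapsto\frac{\sin x}{x}$ is decreasing on $(0,\pi/2)$, the hypotheses $f'(0)<-2$ and $\varepsilon\le\varepsilon_0$ guarantee $\lambda_0\ge 2>1$. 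This strict margin is exactly what makes $\varphi_0$ a genuine \emph{lower} barrier in the invariance step below.

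With $\varepsilon\le\varepsilon_0$ fixed, the plan is then to choose $\alpha>0$ as provided by the a priori estimates of Section~\ref{sec:Invariance}, so that: (i) $P$ is well defined on $\mathcal B^\alpha$, meaning that the continuation $x_b$ of any $b\in\mathcal B^\alpha$ returns, after a finite time $T(b)>0$, to a segment which, translated back to $[-1-\varepsilon/2,0]$, again lies in $\mathcal B^\alpha$ — here one uses that $x_b$ is slowly oscillating (Theorem~\ref{thm:slowly-oscillating-extension}), the monotonicity of the integral operator on intervals of constant sign, the comparison with shifts of $\varphi_0$ far from the zeros, and a steepness argument at the zeros allowing at most one crossing of each eigenvector; and (ii) $P\colon\mathcal B^\alpha\to\mathcal B^\alpha$ is continuous and compact (Section~\ref{sec:Poincare}), compactness coming from the smoothing effect of the two integrations in \eqref{eq:xb} together with the uniform bounds furnished by the tail conditions on $f$ in Assumption~\ref{as:f}.

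The core difficulty is that $\mathcal B^\alpha$ is not convex, so Schauder's theorem cannot be applied to $P$ directly. To circumvent this, the plan is to use the homeomorphism $H$ constructed in Section~\ref{sec:Invariance} between $\mathcal B^\alpha$ and a closed, bounded, convex subset $K$ of an auxiliary Banach space. Then $\widetilde P:=H\circ P\circ H^{-1}\colon K\to K$ is continuous and compact; the set $C:=\overline{\mathrm{conv}}\,\widetilde P(K)$ is compact (closed convex hull of a relatively compact set), convex, contained in $K$, and satisfies $\widetilde P(C)\subseteq\widetilde P(K)\subseteq C$, so Schauder's fixed-point theorem yields $y^\star\in C$ with $\widetilde P(y^\star)=y^\star$. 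Equivalently, $\mathcal B^\alpha$ has the fixed-point property (cf.~\cite{GD}), and $b^\star:=H^{-1}(y^\star)$ satisfies $P(b^\star)=b^\star$.

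It remains to check that a fixed point of $P$ produces the claimed object. That $b^\star\in\mathcal B^\alpha$ is fixed by $P$ means $x_{b^\star}$ coincides with $b^\star$ on $[-1-\varepsilon/2,0]$ and with its own translate by $T(b^\star)$ on $[T(b^\star)-1-\varepsilon/2,\,T(b^\star)]$; the $T(b^\star)$-periodic extension of $x_{b^\star}$ is then a solution of \eqref{eq:prob} on $\R$, slowly oscillating by Theorem~\ref{thm:slowly-oscillating-extension}. It is nontrivial since every $b\in\mathcal B^\alpha$ is bounded below by the positive part of $\gamma_\tau$ on a large interval, so $b^\star\not\equiv 0$; and it is genuinely oscillating because the only constant solution of \eqref{eq:prob} is $0$ (from $xf(x)<0$). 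I expect the work specific to this theorem to be only the bookkeeping of the last two paragraphs (the compact–convex–hull argument and the transfer of compactness through $H$), the genuine obstacle — the forward invariance of the non-convex set $\mathcal B^\alpha$ and the construction of $H$ — having been resolved in Section~\ref{sec:Invariance}.
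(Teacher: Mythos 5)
Your proposal is correct and follows essentially the same route as the paper: $\varepsilon_0$ is exactly the threshold making $\lambda_0\ge 2$, the invariance and continuity/compactness of the Poincaré map come from Sections~\ref{sec:Poincare}--\ref{sec:Invariance}, and the non-convexity of the phase set is circumvented by transporting the problem through the homeomorphism to the convex set $V^{\alpha,\tau_0}_R$ where Schauder applies. The only cosmetic difference is that the paper works on the restricted invariant set $\xi_1^{-1}(\xi_2^{-1}(V^{\alpha,\tau_0}_R))$ (with $\tau\ge\tau_0$ and $\|b\|\le R$) rather than on all of $\mathcal B^\alpha$, a detail your appeal to the ``a priori estimates of Section~\ref{sec:Invariance}'' implicitly covers.
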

The main idea behind Theorem \ref{thm:existence-solutions} is to study the Poincaré map of \eqref{eq:prob} defined as the shift of $b$ to its first positive zero; more precisely, if $z_1(b)$ (which we define later, in \eqref{eq:z_1}) is the first time $t\geq 0$ such that $x_b(t)=0$,  then  $\mathcal{F}(b)(t)=x_b(t+z_1(b)+1+\varepsilon/2)$. It turns out that we can prove that $\mathcal{B}^\alpha $ is stable by $\mathcal{F}$.

Finally we  consider the behavior of the slowly oscillating periodic solution in the singular limit $\varepsilon\to 0$.
\begin{theorem}\label{thm:limit-behavior}
    Let $f$ be a decreasing function which satisfies Assumption \ref{as:f} and suppose that $f'(0)<-2$. Assume moreover that $\kappa_2=-\kappa_1=:\kappa_0$, and $f'(\kappa_0)>-1$ and $f'(\kappa_0)<-1$.  Let $b^{\varepsilon}(t)$ be a family of periodic, slowly oscillating solutions of \eqref{eq:prob} with minimal period, bounded in the sup-norm by $\kappa_0$, and such that the restriction of $b^\varepsilon$ to $[-1-\varepsilon/2, 0]$ belongs to $\mathcal{B}^\alpha$. Then $b^\varepsilon$ converges as $\varepsilon\to 0$ to a square wave $b^*(t)$ solution of $\varphi(t)=f\big(\varphi(t-1)\big)$ and that satisfies 
	\begin{equation*}
		b^*(t)= 
		\begin{cases}
			\kappa_0 & \text{ if }  \lfloor t\rfloor \text{ is odd },\\
			-\kappa_0 & \text{ if } \lfloor t\rfloor \text{ is even},
			\end{cases}
	\end{equation*}
	where $\lfloor t\rfloor$ denotes the integer part of $t$.
	The convergence holds uniformly in any closed interval  $I$ with $I\cap\mathbb{Z}=\varnothing$ and in  $L^1_{loc}(\mathbb{R})$.
\end{theorem}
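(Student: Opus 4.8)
The plan is to turn the singular structure of \eqref{eq:prob} into a quantitative statement: away from the zeros of $b^\varepsilon$ the equation reads $b^\varepsilon(t)=\frac1\varepsilon\int_{t-1-\varepsilon/2}^{t-1+\varepsilon/2}f\big(b^\varepsilon(s)\big)\,\dd s$, an average over a window of width $\varepsilon$, which in the limit should collapse to \eqref{eq:difference}. Two things must be controlled: the zeros of $b^\varepsilon$ must converge to $\Z$, and the amplitude of $b^\varepsilon$ must be pinned to the $2$-cycle $\{-\kappa_0,\kappa_0\}$ of $f$. Observe first that by the definitions of $\kappa_1=-\kappa_0$ and $\kappa_2=\kappa_0$ in Assumption \ref{as:f} one has $f(\kappa_0)=-\kappa_0$ and $f(-\kappa_0)=\kappa_0$, so $g:=f\circ f$ fixes $\kappa_0$; since $f$ is decreasing, $g$ is increasing. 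Moreover $g(0)=0$ with $g'(0)=f'(0)^2>4>1$, and a short case analysis using only Assumption \ref{as:f} shows that no $2$-cycle of $f$ can have its positive representative in $(0,\kappa_0)$ (such a point $a$ has $f(a)<-a$ by the definition of $\kappa_2$, and then the definition of $\kappa_1$, together with $f$ decreasing, forces $f(f(a))>a$). Hence $\kappa_0$ is the only fixed point of $g$ in $(0,\kappa_0]$, so $g(x)>x$ on $(0,\kappa_0)$, and for any $c\in(0,\kappa_0]$ the iterates $g^{n}(c)$ increase to $\kappa_0$. This one-dimensional fact is the engine of the proof.

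\emph{Step 1 (localization of the zeros).} Since $b^\varepsilon|_{[-1-\varepsilon/2,0]}\in\mathcal{B}^\alpha$, Assumption \ref{as:Balpha} gives $\tau^\varepsilon\in(0,\varepsilon]$ with $b^\varepsilon(-\tau^\varepsilon)=0$, with $b^\varepsilon\ge\gamma_{\tau^\varepsilon}$ on $[-1-\varepsilon/2,-\tau^\varepsilon]$ (hence $b^\varepsilon>0$ on a fixed-margin core of that interval) and $b^\varepsilon\le\alpha\varphi_0^{\tau^\varepsilon}\le 0$ on $[-\tau^\varepsilon,0]$. By Definition \ref{def:slowly-oscillating}, $b^\varepsilon$ is sign-definite between consecutive zeros, which are at distance at least $1-\varepsilon/2$ apart. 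I claim the zero $\zeta^\varepsilon$ following $-\tau^\varepsilon$ satisfies $\zeta^\varepsilon\in[1-\tfrac32\varepsilon,\,1+\tfrac\varepsilon2]$: the lower bound is the slow-oscillation constraint, while for the upper bound note that if $t\in(1-\tau^\varepsilon+\tfrac\varepsilon2,\zeta^\varepsilon)$ then the delay window $[t-1-\tfrac\varepsilon2,t-1+\tfrac\varepsilon2]$ lies inside $(-\tau^\varepsilon,\zeta^\varepsilon)$, where $b^\varepsilon\le 0$ so $f(b^\varepsilon)\ge 0$; this forces $b^\varepsilon(t)\ge 0$, contradicting the negativity of $b^\varepsilon$ on its sign-interval unless this interval of $t$'s is empty. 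Applying the same argument to each subsequent sign-interval and using periodicity for $t<0$ shows that every zero of $b^\varepsilon$ lies within $C\varepsilon$ of an integer, that every sign-interval has length $1+O(\varepsilon)$, and that the signs of $b^\varepsilon$ alternate in agreement with the square wave $b^*$. In particular, for any compact $I$ with $I\cap\Z=\varnothing$ and $\varepsilon$ small, $I$ lies in the \emph{core} (the sign-interval shrunk by a fixed margin) of a single sign-interval, on which $b^\varepsilon$ has the sign of $b^*|_I$.

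\emph{Step 2 (amplitude bootstrap).} Fix $\eta>0$ and write $\mathrm{Core}_\eta(\pm)$ for a sign-interval of the indicated sign shrunk by $\eta$ at each end. When $\varepsilon$ is small relative to $\eta$, the delay window of any $t\in\mathrm{Core}_\eta(-)$ lies inside $\mathrm{Core}_{\eta-C\varepsilon}(+)$ of the preceding positive interval, and symmetrically; since $f$ is decreasing and is negative on the positive interval, this yields, for $a^{\varepsilon,\eta}:=\inf_{\mathrm{Core}_\eta(+)}b^\varepsilon>0$,
\[
a^{\varepsilon,\eta}\ \ge\ f\Big(\sup_{\mathrm{Core}_{\eta-C\varepsilon}(-)}b^\varepsilon\Big)\ \ge\ f\Big(f\big(a^{\varepsilon,\eta-2C\varepsilon}\big)\Big)\ =\ g\big(a^{\varepsilon,\eta-2C\varepsilon}\big),
\]
where the second step uses $\sup_{\mathrm{Core}(-)}b^\varepsilon\le f\big(\inf_{\mathrm{Core}(+)}b^\varepsilon\big)$, periodicity of the sign-intervals, and the monotonicity of $f$. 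Iterating $N_\varepsilon:=\lfloor\eta/(4C\varepsilon)\rfloor\to\infty$ times keeps the remaining margin above $\eta/2$, so $a^{\varepsilon,\eta}\ge g^{N_\varepsilon}\big(a^{\varepsilon,\eta/2}\big)\ge g^{N_\varepsilon}(c_0)$, where $c_0=c_0(\eta,\alpha)>0$ is a lower bound for $a^{\varepsilon,\eta/2}$ supplied (via periodicity) by the barrier $\gamma_{\tau^\varepsilon}$ in Assumption \ref{as:Balpha}. Combining with $\|b^\varepsilon\|_\infty\le\kappa_0$ and the one-dimensional fact above,
\[
\kappa_0\ \ge\ \sup_{\mathrm{Core}_\eta(+)}b^\varepsilon\ \ge\ a^{\varepsilon,\eta}\ \ge\ g^{N_\varepsilon}(c_0)\ \xrightarrow[\varepsilon\to0]{}\ \kappa_0,
\]
so $b^\varepsilon\to\kappa_0$ uniformly on $\mathrm{Core}_\eta(+)$; feeding this back into \eqref{eq:prob} on $\mathrm{Core}_\eta(-)$ and using the continuity of $f$ gives $b^\varepsilon\to f(\kappa_0)=-\kappa_0$ uniformly on $\mathrm{Core}_\eta(-)$. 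As $\eta>0$ is arbitrary, $b^\varepsilon\to b^*$ uniformly on every compact disjoint from $\Z$ (no subsequence is needed, every step being quantitative).

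\emph{Step 3 (limit equation and $L^1_{\mathrm{loc}}$ convergence).} For $t\notin\Z$, passing to the limit in $b^\varepsilon(t)=\frac1\varepsilon\int_{t-1-\varepsilon/2}^{t-1+\varepsilon/2}f(b^\varepsilon(s))\,\dd s$ — using that a small neighbourhood of $t-1$ avoids $\Z$, the uniform convergence of Step 2 there, and the continuity of $f$ — yields $b^*(t)=f\big(b^*(t-1)\big)$; one checks directly that the stated square wave satisfies this relation by splitting on the parity of $\lfloor t\rfloor$ and using $f(\pm\kappa_0)=\mp\kappa_0$. Finally, for $L^1_{\mathrm{loc}}$ convergence, on $[-T,T]$ split the integral into a $\delta$-neighbourhood of $\Z$ — of measure $O(\delta T)$, with $|b^\varepsilon-b^*|\le 2\kappa_0$ — and its complement, a finite union of compacts disjoint from $\Z$ on which $\|b^\varepsilon-b^*\|_\infty\to0$; letting first $\varepsilon\to0$ and then $\delta\to0$ gives $\int_{-T}^{T}|b^\varepsilon-b^*|\to0$.

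The main obstacle is Step 2: each individual comparison with $f$ is trivial, but the contraction toward $\kappa_0$ must survive $N_\varepsilon\sim\varepsilon^{-1}$ compositions, so the scheme relies entirely on the monotone one-dimensional dynamics of $g=f\circ f$ — precisely, that $\kappa_0$ attracts $(0,\kappa_0]$ from the right — and on carefully propagating the shrinking cores through the delay with a uniform constant $C$. The localization in Step 1 is where the shape of $\mathcal{B}^\alpha$ and the absence of any symmetry assumption on $f$ must be handled directly rather than inherited from an odd-symmetry ansatz.
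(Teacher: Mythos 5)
Your proposal is correct, and it reaches the conclusion by a genuinely different route from the paper. The paper's key lemma is a comparison with localized sine eigenfunctions: it picks a secant slope $a\in(f'(0),-1)$, uses the hypothesis on $f'(\pm\kappa_0)$ to get $f(x)/x<a$ on $(\kappa_a^-,\kappa_a^+)\setminus\{0\}$, defines $\mu^*_{T,\varepsilon}$ as the largest multiple of $\varphi_k^T$ lying below $b^\varepsilon$ on a window of width $1/k$, and runs the eigenvalue relation twice across one period to get $b^\varepsilon(t_0)\geq \lambda_{a,k}^2\, b^\varepsilon(t_0)+O(\varepsilon)$ at a touching point; the resulting bound $b^\varepsilon(t_0)\leq C\varepsilon/(\lambda_{a,k}^2-1)$ contradicts the uniform positivity supplied by the $\mathcal{B}^\alpha$ barrier, and the conclusion follows from the triple limit $\varepsilon\to0$, $k\to\infty$, $a\to-1^-$. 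You instead bootstrap directly with the interval map $g=f\circ f$: the same barrier gives the uniform seed $c_0>0$, and $O(\varepsilon^{-1})$ backward iterations of the delay through shrinking cores give $a^{\varepsilon,\eta}\geq g^{N_\varepsilon}(c_0)\to\kappa_0$, using only that $g$ is increasing with $g>\mathrm{id}$ on $(0,\kappa_0)$ — a fact you correctly derive from the definitions of $\kappa_1,\kappa_2$ and the monotonicity of $f$ alone. Each approach buys something: the paper's gives an explicit sinusoidal lower envelope and localizes the contradiction to a single window, at the price of the derivative conditions at $\pm\kappa_0$ (which your argument never uses, so your route proves the statement under weaker hypotheses) and a more delicate multi-parameter limit; yours is more elementary but hinges on careful, uniform-in-$j$ bookkeeping of the margins (your constant $C$ can be taken equal to $1$ since consecutive zeros are within $\varepsilon/2$ of unit spacing) and on the exact identification, via the period $2+z(\varepsilon)$, of cores two sign-intervals apart — both of which you handle. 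Your Step 1 is essentially Remark \ref{rem:boundontau} propagated periodically, and your Step 3 matches the paper's $L^1_{loc}$ argument up to replacing dominated convergence by an explicit $\delta$-neighbourhood splitting.
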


\section{The linearised equation}\label{sec:eigenvalue}

Linearising the problem \eqref{eq:prob} around the trivial equilibrium 0 we obtain
\begin{equation}\label{eq:linear}
    u(t) = \frac{f'(0)}{\varepsilon}\int_{1-\varepsilon/2}^{1+\varepsilon/2} u(t-s)\dd s.
\end{equation}
Let $E$ be the Banach space of odd sine-type functions endowed with the sup-norm, that is, $E=\{ u\in C(\R) \colon u(-t)=-u(t)~\text{and}~ u(t+1)=-u(t) \}$, and $T$ be the operator given by
\begin{equation}\label{eq:op_linear}
    (Tu)(t) = -\frac{1}{\varepsilon}\int_{1-\varepsilon/2}^{1+\varepsilon/2} u(t-s)\dd s.
\end{equation}

It is easy to check that $E$ is invariant under $T$, and then $T\colon E \to E$. Let us define the cone $P=\{ u\in E \colon u(t)\geq0~\forall t\in[0,1] \}$. In the rest of the paragraph we establish the existence and uniqueness of a positive eigenfunction for $T$, and the simplicity of the corresponding positive eigenvalue.

In a similar context, Chow, Diekmann and Mallet-Paret in \cite[Proposition 3.4]{CDM}  proved that the iterations of $T$ {\it increase} the support of a non-zero function in the cone $P$. Here, by replicating their argument, we can obtain the  following result.
\begin{prop}\label{prop:positivity}
    For every function $u\in P\setminus \{0\}$ there exists $n\in\N$ such that $(T^n u)(t)>0$ for all $t\in(0,1)$.
\end{prop}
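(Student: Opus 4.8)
The plan is to follow the strategy of Chow, Diekmann and Mallet-Paret \cite[Proposition 3.4]{CDM}: show that each application of $T$ strictly enlarges the open set $\{t\in(0,1)\colon u(t)>0\}$ until it fills the whole interval $(0,1)$. First I would record the structural constraints on a function $u\in P\setminus\{0\}$: by oddness and the antiperiodicity $u(t+1)=-u(t)$, $u$ is determined on all of $\R$ by its restriction to $[0,1]$, where it is nonnegative; moreover $u(0)=0$ and $u(1)=-u(0)=0$, so the zero set of $u$ always contains the integers. Next I would analyse the action of $T$ pointwise. Using the change of variables and the antiperiodicity, $(Tu)(t)=-\frac{1}{\varepsilon}\int_{t-1-\varepsilon/2}^{t-1+\varepsilon/2}u(s)\,\dd s = \frac{1}{\varepsilon}\int_{t-\varepsilon/2}^{t+\varepsilon/2}u(s)\,\dd s$, where in the last step I shifted by one period using $u(s)=-u(s+1)$; hence $(Tu)(t)$ is (up to the positive factor $1/\varepsilon$) the average of $u$ over the window $[t-\varepsilon/2,t+\varepsilon/2]$. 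In particular $Tu\geq 0$ on $[0,1]$ whenever $u\geq 0$ there (one has to be slightly careful near the endpoints, where the averaging window pokes into $[-\varepsilon/2,0]$ or $[1,1+\varepsilon/2]$, but there $u$ equals $-u(\cdot+1)\le 0$ or $-u(\cdot-1)\le 0$ respectively and the sign works out after accounting for the overall minus sign — this is exactly the invariance of $P$), and $(Tu)(t)>0$ as soon as the window $[t-\varepsilon/2,t+\varepsilon/2]$ meets the open set where $u>0$.

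The heart of the argument is then the following observation: if $\supp(u)\cap(0,1)$ is a nonempty open set $U$, then $\supp(Tu)\cap(0,1)$ contains the $\varepsilon/2$-neighbourhood of $U$ intersected with $(0,1)$ (again using the sign bookkeeping near $0$ and $1$ to see that positivity is not destroyed by the boundary). Since $u\not\equiv 0$ and $u$ is continuous, $U$ is nonempty; iterating, $\supp(T^nu)\cap(0,1)$ contains the $n\varepsilon/2$-neighbourhood of $U$ in $(0,1)$, which equals all of $(0,1)$ once $n\varepsilon/2$ exceeds $1$, i.e. for $n>2/\varepsilon$. To make ``the window meeting $U$ forces strict positivity'' rigorous I would argue by contradiction: if $(T^{k}u)(t_0)=0$ for some $t_0\in(0,1)$ then, since $T^{k}u\geq 0$, the nonnegative continuous integrand $T^{k-1}u$ must vanish a.e. on $[t_0-\varepsilon/2,t_0+\varepsilon/2]$ (modulo the period-shift corrections at the endpoints), hence vanish identically there by continuity, which shrinks rather than grows the support — contradicting the inductive hypothesis that the support has already grown to include a neighbourhood of that window.

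The main obstacle I anticipate is the bookkeeping at the endpoints $t=0$ and $t=1$ of the fundamental interval: the averaging window of $T$ straddles these points, where $u$ changes sign, so one cannot simply say ``average of a nonnegative function is nonnegative'' there. The clean way around this is to work on all of $\R$ rather than on $[0,1]$: $Tu$ as computed above is the average of $u$ over a sliding window of width $\varepsilon$, and $u$ (extended by oddness and antiperiodicity) is a genuine continuous function on $\R$ whose zero set is contained in $\Z$ plus the zeros of $u|_{(0,1)}$. One shows $\{t\colon (Tu)(t)\ne 0\}\supseteq$ ($\varepsilon/2$-neighbourhood of $\{u\ne0\}$) $\setminus\Z$ — the removal of $\Z$ being forced because the window centred exactly at an integer averages a function that is odd about that integer, giving zero — and then the iteration floods $(0,1)$ as before. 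I would also need to check that $T$ maps $E$ to $E$ and $P$ to $P$, but this is stated in the excerpt as already established, so I may invoke it directly.
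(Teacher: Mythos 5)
There is a genuine gap, and it sits exactly at the point you flagged as ``bookkeeping at the endpoints'' and then resolved incorrectly. Your reduction $(Tu)(t)=\frac{1}{\varepsilon}\int_{t-\varepsilon/2}^{t+\varepsilon/2}u(s)\,\dd s$ is right, but for $t\in(0,\varepsilon/2)$ the window straddles $0$, where $u$ is odd; the part of $u$ on $(0,\varepsilon/2-t)$ is exactly cancelled by its reflection on $(t-\varepsilon/2,0)$, so that
\begin{equation*}
(Tu)(t)=\frac{1}{\varepsilon}\int_{\varepsilon/2-t}^{\varepsilon/2+t}u(s)\,\dd s ,\qquad t\in(0,\varepsilon/2).
\end{equation*}
The effective window is centred at $\varepsilon/2$ with radius $t$, not at $t$ with radius $\varepsilon/2$. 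Consequently your key lemma --- that $\{Tu\neq0\}$ contains the $\varepsilon/2$-neighbourhood of $\{u\neq0\}$ minus $\Z$ --- is false: take $u\in P\setminus\{0\}$ supported (within $(0,1)$) on a small interval $(0.001,0.002)$ and $\varepsilon=0.2$; then $(Tu)(0.05)=0$ although $0.05$ lies within $\varepsilon/2$ of the support and is not an integer, and in fact $\{Tu>0\}\cap(0,1)=(0.098,0.102)$, which does not even contain the original support. The same cancellation defeats your contradiction step: from $(T^ku)(t_0)=0$ with $t_0\in(0,\varepsilon/2)$ you may only conclude that $T^{k-1}u$ vanishes on $[\varepsilon/2-t_0,\varepsilon/2+t_0]$, not on $[t_0-\varepsilon/2,t_0+\varepsilon/2]$. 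Deleting $\Z$ does not repair this; whole subintervals adjacent to the integers can be lost.

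The proposition is nevertheless true and your interior mechanism is the right one; what must be changed is the boundary analysis. On $[\varepsilon/2,1-\varepsilon/2]$ the support does spread by $\varepsilon/2$ per iteration, so after finitely many steps it accumulates at $\varepsilon/2$ and at $1-\varepsilon/2$ (for $t\in(1-\varepsilon/2,1)$ the analogous identity reads $(Tu)(t)=\frac{1}{\varepsilon}\int_{t-\varepsilon/2}^{2-t-\varepsilon/2}u$, a window centred at $1-\varepsilon/2$ of radius $1-t$). Once every neighbourhood of $\varepsilon/2$ (resp.\ $1-\varepsilon/2$) meets $\{T^ku>0\}$, the reflected-window formula shows that the \emph{entire} strip $(0,\varepsilon/2)$ (resp.\ $(1-\varepsilon/2,1)$) is filled at the next iteration, and one more interior sweep gives $(T^nu)>0$ on all of $(0,1)$. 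So the argument is salvageable, but as written the central inclusion you iterate is false and must be replaced by this two-regime analysis.
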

As the proof does not bring any new argument compared to \cite[Proposition 3.4]{CDM}, we omit it for the sake of concision.

Next we state some terminology about linear positive operator. For more results in this topic see \cite{MAK}.

\begin{defi}
    Let $E$ be a Banach space with a cone $K$ and $u_0\in K\setminus\{0\}$. A linear operator $A\colon E\to E$ is said to be $u_0$-\emph{positive} if for every $x\in K\setminus\{0\}$ there exists $n\in\N$ and positive numbers $\alpha,\beta$ such that
    \[
        \alpha u_0 \leq A^n x \leq \beta u_0.
    \]
\end{defi}

With this definition it is possible to establish that the operator $T$ is, actually, $u_0$-positive for the function $u_0\in P$ represented in Figure \ref{fig:2}.

\begin{figure}[h]
    \centering
    \begin{tikzpicture}
    \draw[->] (-0.5,0) -- (5,0) node [anchor=west] {$t$};
    \draw[->] (0,-0.5) -- (0,2.5) node [anchor=south] {$u_0(t)$};
    \draw (0,0) -- (1,1.5) -- (3,1.5) -- (4,0) node [anchor=north] {1};
    \draw[dashed] (0,1.5) node [anchor=east]{1} -- (1,1.5) -- (1,0) node [anchor=north]{$\varepsilon/4$};
    \draw[dashed] (3,1.5) -- (3,0) node [anchor=north]{$1-\varepsilon/4$};
    \end{tikzpicture}
    \caption{Graph of the function $u_0\colon \R \to \R$ for $t\in[0,1]$.}
    \label{fig:2}
\end{figure}

Then we have the following result.
\begin{lem}\label{lem:u0-positive}
    The operator $T\colon E \to E$ defined by \eqref{eq:op_linear} is $u_0$-positive.
\end{lem}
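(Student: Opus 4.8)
The plan is to reduce the statement to a comparison between a suitable iterate $T^{N}x$ and $u_{0}$, the only delicate region being a neighbourhood of the endpoints $0$ and $1$ of $[0,1]$, where $u_{0}$ vanishes. Given $x\in P\setminus\{0\}$, Proposition \ref{prop:positivity} provides $n\in\N$ with $(T^{n}x)(t)>0$ for all $t\in(0,1)$. I would set $w:=T^{n}x$ and then work with $v:=T^{n+1}x=Tw$; this extra application of $T$ is what makes the argument go through. Since $w$ is continuous, $v$ is $C^{1}$ on $\R$ with $v'(t)=-\tfrac1\varepsilon\big[w(t-1+\varepsilon/2)-w(t-1-\varepsilon/2)\big]$, and since $v\in E$ we have $v(0)=v(1)=0$. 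A short computation using the two symmetries $w(-s)=-w(s)$ and $w(s+1)=-w(s)$ gives $w(-1+\varepsilon/2)=-w(\varepsilon/2)$ and $w(-1-\varepsilon/2)=w(\varepsilon/2)$, hence
\[
    v'(0)=\frac{2}{\varepsilon}\,w\!\left(\frac{\varepsilon}{2}\right)>0 ,
\]
and by antiperiodicity $v'(1)=-v'(0)<0$. This strict positivity of the one-sided derivatives of $v$ at the endpoints is the crucial point: without it $v$ could not be bounded below by a multiple of $u_{0}$ near $0$ and $1$.

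Next I would check that $v>0$ on all of $(0,1)$. For $t\in(0,1)$ one has $v(t)=-\tfrac1\varepsilon\int_{t-1-\varepsilon/2}^{t-1+\varepsilon/2}w(\sigma)\dd\sigma$, where the integration interval is centred at $t-1\in(-1,0)$. On $(-1,0)$ the function $w$ is $\le 0$, and whenever the interval overshoots past $0$ (for $t$ near $1$) or past $-1$ (for $t$ near $0$), the portion on which $w$ has the opposite sign is exactly cancelled — and strictly overcompensated — by the symmetric portion on the other side of $0$ or $-1$, thanks to the oddness of $w$. Hence the integral is strictly negative and $v(t)>0$; this is the same kind of sign bookkeeping as in Proposition \ref{prop:positivity}. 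So $v=T^{n+1}x$ is $C^{1}$, vanishes at $0$ and $1$, is positive on $(0,1)$, and satisfies $v'(0)>0>v'(1)$.

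Finally I would compare $v$ with $u_{0}$. Recall that $u_{0}$ is continuous, positive on $(0,1)$, equal to $1$ on $[\varepsilon/4,1-\varepsilon/4]$, and linear on $[0,\varepsilon/4]$ and on $[1-\varepsilon/4,1]$, so $u_{0}(t)=\tfrac4\varepsilon t$ near $0$ and $u_{0}(t)=\tfrac4\varepsilon(1-t)$ near $1$. Consider $g:=v/u_{0}$ on $(0,1)$: it is continuous there as a ratio of continuous functions with non-vanishing denominator, and the computations above give $g(t)\to\tfrac{\varepsilon}{4}v'(0)>0$ as $t\to0^{+}$ and $g(t)\to-\tfrac{\varepsilon}{4}v'(1)>0$ as $t\to1^{-}$. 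Thus $g$ extends to a continuous, strictly positive function on the compact interval $[0,1]$, hence attains a minimum $\alpha>0$ and a maximum $\beta>0$. This yields $\alpha u_{0}(t)\le v(t)\le\beta u_{0}(t)$ for $t\in(0,1)$, and trivially at $t\in\{0,1\}$, i.e. $\alpha u_{0}\le T^{n+1}x\le\beta u_{0}$ in the order induced by $P$. Since $x\in P\setminus\{0\}$ was arbitrary, $T$ is $u_{0}$-positive. I expect the main obstacle to be the endpoint identity $v'(0)=\tfrac2\varepsilon w(\varepsilon/2)$ together with the verification that $T^{n+1}x$ remains strictly positive on $(0,1)$; everything else is elementary.
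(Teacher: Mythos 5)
Your proposal is correct and follows essentially the same route as the paper: apply Proposition \ref{prop:positivity} to get $w=T^{n}x>0$ on $(0,1)$, apply $T$ once more to gain strict positivity of the one-sided derivatives of $T^{n+1}x$ at the endpoints $0$ and $1$, and then deduce the two-sided comparison with $u_0$ from positivity on the interior plus the endpoint derivative control. Your explicit computation $v'(0)=\tfrac{2}{\varepsilon}w(\varepsilon/2)$ and the continuous extension of the ratio $v/u_0$ simply spell out in more detail what the paper summarizes with the bounds $k\le|(T^{n+1}u)'|\le K$ near the endpoints.
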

\begin{proof}
Due to the symmetry of the functions considered, the operator $T$ can be rewritten as
\[
    (Tu)(t) = \frac{1}{\varepsilon} \int_{-\varepsilon/2}^{\varepsilon/2} u(t-s) \dd s = \frac{1}{\varepsilon} \int_{t-\varepsilon/2}^{t+\varepsilon/2} u(s)\dd s,
\]
and therefore $(Tu)'(t) = \varepsilon^{-1}[ u(t+\varepsilon/2) - u(t-\varepsilon/2) ]$ for all $t\in \R$. In particular, this implies that $Tz$ is strictly increasing on $[-\varepsilon/2,\varepsilon/2]$ and strictly decreasing on $[1-\varepsilon/2, 1+\varepsilon/2]$ if $u(t)>0$ in $(0,1)$.

Let $u\in P\setminus\{0\}$ and $n\in \N$ be the number given by Proposition \ref{prop:positivity}. Note that $T^{n+1}u$ is strictly increasing on $[-\varepsilon/2, \varepsilon/2]$ and decreasing on $[1-\varepsilon/2, 1+\varepsilon/2]$ by the previous observation. Actually, there exists constants $K,k>0$ such that
\[
    k \leq|(T^{n+1}u)'(t)| \leq K, \quad t\in[0,\varepsilon/2]\cup [1-\varepsilon/2,1].
\]
    Since $T^{n+1}u$ is positive on $(0,1)$, it is possible to take $\alpha$ and $\beta$ positive numbers small and large enough, respectively, in a manner that $\alpha u_0 \leq T^{n+1} u \leq \beta u_0$, hence completing the proof of Lemma \ref{lem:u0-positive}.
\end{proof}

Thanks to the previous results, we can state the existence and uniqueness of a positive eigenvalue and a positive eigenfunction.

\begin{thm}
	The operator $T\colon E \to E$ possesses a unique eigenfunction $\varphi_0$ (up to multiplication by a positive scalar) which belongs to $P$.
\end{thm}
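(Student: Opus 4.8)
The plan is to exhibit the eigenfunction explicitly and to obtain its uniqueness from the $u_0$-positivity of $T$ established in Lemma~\ref{lem:u0-positive}. For \textbf{existence} I would simply verify that $\varphi_0(t):=\sin(\pi t)$ is the desired eigenfunction. It is odd and satisfies $\varphi_0(t+1)=-\varphi_0(t)$, hence $\varphi_0\in E$, and $\varphi_0\ge 0$ on $[0,1]$, hence $\varphi_0\in P\setminus\{0\}$. Using the mean-value form $(Tu)(t)=\frac{1}{\varepsilon}\int_{t-\varepsilon/2}^{t+\varepsilon/2}u(s)\,\dd s$ recorded in the proof of Lemma~\ref{lem:u0-positive}, one expands $\sin(\pi(t-s))=\sin(\pi t)\cos(\pi s)-\cos(\pi t)\sin(\pi s)$ and discards the odd-in-$s$ term to get $T\varphi_0=\lambda_0\varphi_0$ with $\lambda_0=\frac{2}{\pi\varepsilon}\sin\big(\frac{\pi\varepsilon}{2}\big)>0$ for $\varepsilon\in(0,1)$ --- the very quantity appearing in Theorem~\ref{thm:existence-solutions}. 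Thus $\varphi_0\in P$ is an eigenfunction of $T$ with positive eigenvalue.

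For \textbf{uniqueness up to a positive scalar}, let $\psi\in P\setminus\{0\}$ satisfy $T\psi=\mu\psi$. First, $\mu>0$: by Proposition~\ref{prop:positivity} there is $n$ with $\mu^n\psi=T^n\psi>0$ on $(0,1)$, which forces $\mu^n>0$ and $\psi>0$ on $(0,1)$; if $\mu<0$ (so $n$ even) then $(T\psi)(1/2)=\mu\,\psi(1/2)<0$, contradicting $(T\psi)(1/2)=\frac{1}{\varepsilon}\int_{1/2-\varepsilon/2}^{1/2+\varepsilon/2}\psi(s)\,\dd s>0$, which holds because $[1/2-\varepsilon/2,\,1/2+\varepsilon/2]\subset(0,1)$ for $\varepsilon<1$. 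Given $\mu>0$, the $u_0$-positivity of $T$ is exactly the hypothesis of Krasnoselskii's theorem on $u_0$-positive operators, which yields at once $\mu=\lambda_0$ and $\psi=c\,\varphi_0$ for some $c>0$ (see \cite{MAK}). A self-contained argument would run as follows: the $u_0$-positivity bounds applied to $\varphi_0$ and to $\psi$, combined with $T^m\varphi_0=\lambda_0^m\varphi_0$ and $T^m\psi=\mu^m\psi$, produce positive constants with $c_1\varphi_0\le\psi\le c_2\varphi_0$ and $\kappa u_0\le\varphi_0\le\kappa' u_0$ on $[0,1]$; since $T$ maps $P$ into itself it is order-preserving, so applying $T^k$ to $c_1\varphi_0\le\psi\le c_2\varphi_0$ gives $c_1(\lambda_0/\mu)^k\varphi_0\le\psi\le c_2(\lambda_0/\mu)^k\varphi_0$ for all $k$, which is impossible unless $\mu=\lambda_0$; then, with $\mu=\lambda_0$, set $s^\ast:=\sup\{s>0:\psi\ge s\varphi_0\}$, which is finite and strictly positive by the previous bounds and satisfies $\psi\ge s^\ast\varphi_0$ since $P$ is closed, and if $w:=\psi-s^\ast\varphi_0$ were a nonzero element of $P$ then $u_0$-positivity would give $\lambda_0^m w=T^m w\ge\delta u_0\ge(\delta/\kappa')\varphi_0$, so $\psi\ge\big(s^\ast+(\delta/\kappa')\lambda_0^{-m}\big)\varphi_0$, contradicting the maximality of $s^\ast$; hence $w=0$ and $\psi=s^\ast\varphi_0$.

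I do not anticipate a deep obstacle, since this is a textbook instance of the Perron--Frobenius/Krein--Rutman theory for $u_0$-positive operators; the only points requiring genuine care are, in my view, (a) ruling out a negative eigenvalue, handled above through the mean-value representation of $T$, and (b) the comparability of $\varphi_0$ with the reference function $u_0$ near their common zeros $t=0$ and $t=1$, where the piecewise-linear profile of $u_0$ at its endpoints (Figure~\ref{fig:2}) has to be matched against $\sin(\pi t)\sim\pi t$ as $t\to 0^+$.
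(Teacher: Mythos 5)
Your proof is correct and follows essentially the same route as the paper: exhibit $\varphi_0(t)=\sin(\pi t)$ explicitly with eigenvalue $\tfrac{2}{\pi\varepsilon}\sin\big(\tfrac{\pi\varepsilon}{2}\big)$, then deduce uniqueness from the $u_0$-positivity of $T$ via Krasnoselskii's Theorems 2.10--2.11 in \cite{MAK}. The only difference is that you unpack the cited theorems into a self-contained comparison/sliding argument (and explicitly rule out a negative eigenvalue), which the paper leaves to the reference.
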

\begin{proof}
We first prove that the operator has a positive eigenfunction. By simple inspection it can be seen that $\varphi_0(t) = \sin(\pi t)$ is a positive eigenfunction for $T$. In fact,
\[
    (T\varphi_0)(t) = \frac{2}{\pi\varepsilon} \sin\left( \frac{\pi\varepsilon}{2} \right)\varphi_0(t).
\]
Then, the results follows immediately from Theorems 2.10 and 2.11 in \cite{MAK}, and the previous lemma.
\end{proof}

For the rest of the work, let us fix some notation. We denote $\varphi_0(t) =\sin(\pi t)$ the unique function such that $\varphi_0\in P$ (recall \eqref{eq:princ-eigvec}) and 
\begin{equation}\label{eq:princ-eigval}
    \lambda_0 = -f'(0)\frac{2}{\pi\varepsilon}\sin\left( \frac{\pi\varepsilon}{2}  \right),
\end{equation}
the positive number such that $M\varphi_0 = \lambda_0 \varphi_0$, where $M = -f'(0)T$. That is, $M$ corresponds to the operator given by the linearised equation \eqref{eq:linear}. Additionally, for $\tau\in\R$, we set the shifted function $\varphi_0^\tau(t) = \varphi_0(t+1+\tau)$.

\section{A first upper bound}\label{sec:space}

Looking at the definition of $x_b$ in \eqref{eq:xb}, we suspect its norm is controlled by the properties of $f$. In particular, by Assumption \ref{as:f} there exists a positive constant $a_0$ (as small as we want) such that $f$ is strictly decreasing on $[-a_0,a_0]$ and
\begin{equation}\label{eventual_posi}
    |f(x)| \geq \max\{f(-a_0), |f(a_0)|\} = \max_{y\in[-a_0,a_0]} |f(y)|, \quad \,^\forall |x|\geq a_0,
\end{equation}
and also a constant $A_0>0$ such that 
\begin{equation}\label{diag_cond}
    |f(x)| < |x|, \quad \,^\forall |x|\geq A_0.
\end{equation}

Now, the aforementioned property  can be stated formally as follows.

\begin{prop}\label{unif_bound}
	Let $R\geq \max_{|x|\leq A_0}|f(x)|$. Then, for every continuous function $b\colon [-1-\varepsilon/2,0]\to \R$ with $\|b\|\leq R$, it follows $|x_b(t)|\leq R$ for all $t\geq 0$ (here, $\|\cdot\|$ denotes the sup-norm).
\end{prop}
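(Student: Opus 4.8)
The plan is to reduce everything to a forward-invariance property of the interval $[-R,R]$ under the one-dimensional map $f$, and then to propagate the resulting bound along the equation \eqref{eq:xb} by induction on consecutive time windows of length $1-\varepsilon/2$. The point is that $x_b(t)$ for $t\geq 0$ is an average (a convex combination) of values of the form $f(x_b(\cdot))$, so a bound $|f(y)|\leq R$ for all admissible arguments $y$ immediately transfers to $x_b(t)$.

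First I would establish the elementary closure property: $|y|\leq R \implies |f(y)|\leq R$. Indeed, if $|y|\leq A_0$ then $|f(y)|\leq \max_{|x|\leq A_0}|f(x)|\leq R$ by the choice of $R$; and if $A_0<|y|\leq R$ then $|f(y)|<|y|\leq R$ by \eqref{diag_cond} (if $R<A_0$ the second case is vacuous and only the first is used). Hence $f$ maps $[-R,R]$ into itself. This is the only step with genuine content, and it is where the hypothesis on $R$ together with \eqref{diag_cond} is consumed; note that the monotonicity of $f$ near $0$ and \eqref{eventual_posi} play no role here and are kept for the later oscillation arguments.

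Next I would run the induction. Set $\delta:=1-\varepsilon/2>0$ and claim that $|x_b(t)|\leq R$ for all $t\in[-1-\varepsilon/2,\,n\delta]$, for every $n\in\N$. For $n=0$: on $[-1-\varepsilon/2,0)$ this is the hypothesis $\|b\|\leq R$, while at $t=0$ one has $x_b(0)=\frac{1}{\varepsilon}\int_{1-\varepsilon/2}^{1+\varepsilon/2}f(b(-s))\,\dd s$ with $-s\in[-1-\varepsilon/2,-1+\varepsilon/2]\subseteq[-1-\varepsilon/2,0)$, so $|b(-s)|\leq R$, hence $|f(b(-s))|\leq R$ by the closure property, and averaging gives $|x_b(0)|\leq R$. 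For the inductive step, assume the bound on $[-1-\varepsilon/2,n\delta]$ and take $t\in(n\delta,(n+1)\delta]$; then $t\geq n\delta\geq 0$, so $x_b(t)=\frac{1}{\varepsilon}\int_{1-\varepsilon/2}^{1+\varepsilon/2}f(x_b(t-s))\,\dd s$, and for $s\in[1-\varepsilon/2,1+\varepsilon/2]$ one has $t-s\leq (n+1)\delta-\delta=n\delta$ and $t-s\geq n\delta-(1+\varepsilon/2)\geq -(1+\varepsilon/2)$, so $t-s\in[-1-\varepsilon/2,n\delta]$, whence $|x_b(t-s)|\leq R$; the closure property and averaging give $|x_b(t)|\leq R$. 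Since $\delta>0$, the intervals $[-1-\varepsilon/2,n\delta]$ exhaust $[-1-\varepsilon/2,+\infty)$, which proves $|x_b(t)|\leq R$ for all $t\geq 0$.

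There is no real obstacle; the only thing requiring care is the delay bookkeeping, namely checking that when $t$ lies in the newly added window the arguments $t-s$ remain inside the previously controlled interval — which is precisely why the increment must be $1-\varepsilon/2$, the length of the shortest admissible step — together with the harmless observation that $x_b$ may jump at $t=0$ but that both one-sided values are already controlled by $R$.
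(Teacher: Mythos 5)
Your proposal is correct and takes essentially the same approach as the paper: you first show that $f$ maps $[-R,R]$ into itself via the case split on $A_0$ using \eqref{diag_cond}, and then propagate the bound by induction on time windows of length $1-\varepsilon/2$. The only difference is that you spell out the delay bookkeeping of the inductive step, which the paper leaves implicit after treating the first window.
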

\begin{proof}
Let $R$ as in the statement and $b$ be such a continuous function. Then, for $t\in[0,1-\varepsilon/2]$,
\[
    |x_b(t)| \leq \frac{1}{\varepsilon}\int_{t-1-\varepsilon/2}^{t-1+\varepsilon/2} |f(b(s))| \dd s \leq \max_{x\in[-R,R]} |f(x)|.
\]
If $R\leq A_0$ then clearly $|x_b(t)|\leq R$. On the other hand, if $R>A_0$, by \eqref{diag_cond} we deduce $|f(x)|\leq R$ for $A_0\leq|x|\leq R$ and therefore $|x_b(t)|\leq R$. Consequently, we have proved the result for $t\in[0,1-\varepsilon/2]$, and the general bound follows by induction.
\end{proof}

\section{The Poincaré map}\label{sec:Poincare}

\subsection{Assumptions and definition}

For any $b\in \mathcal{B}^\alpha$, we define 
\begin{equation}\label{eq:z_1}
	z_1(b):= \sup\{ t\geq 0 \colon x_b(s)<0~\forall s\in [0,t] \},
\end{equation}
which corresponds to the first positive zero of $x_b$. Observe $z_1(b)$ is well-defined and finite. Indeed  $0$ satisfies $x_b(0)<0$ and therefore $z_1(b)>-\infty$. Let us show by contradiction that $z_1(b)<+\infty$. If $z_1(b)=+\infty$ then $x_b(t)<0$ for all $t\geq0$; but then
\[
    x_b(1+\varepsilon/2) = \frac{1}{\varepsilon} \int_{0}^{\varepsilon} f(x_b(s)) \dd s>0
\]
owing to the negative-feedback condition, which is a contradiction. Therefore, $z_1(b)$ is well defined, and is possible to define $z_j(b)$ as the $j^{\text{th}}$-positive zero of $x_b$ in a similar fashion.

\begin{remark}\label{rem:boundontau}
Observe that $z_1(b) \in (1-\tau-\varepsilon/2,1-\tau+\varepsilon/2)$. Indeed,
\[
    x_b(1-\tau-\varepsilon/2) = \frac{1}{\varepsilon} \int_{-\tau-\varepsilon}^{-\tau} f(b(s))\dd s <0,
\]
which implies that $x_b(t)<0$ for all $t\in[0,1-\tau-\varepsilon/2]$ and thus $z_1(b)> 1-\tau-\varepsilon/2$. On the other hand,
\[
    x_b(1-\tau+\varepsilon/2) = \frac{1}{\varepsilon} \int_{-\tau}^{-\tau+\varepsilon} f(x_b(s))\dd s >0,
\]
as $-\tau + \varepsilon < 1-\tau-\varepsilon/2$, and therefore $z_1(b)<1-\tau+\varepsilon/2$.
\end{remark}

\begin{prop}\label{deriv_x}
    Let $b\in \mathcal{B}^\alpha$ be given. If $x_b$ is differentiable at $z_1(b)$, then $x'_b(z_1(b))>0$. If $x_b$ is not differentiable at that point, the two half-derivatives 
	\begin{equation*}
		\partial_+x_b(t):=\lim_{h\to 0, h>0} \dfrac{x_b\big(t+h\big)-x_b\big(t\big)}{h} \text{ and } \partial_- x_b(t) := \lim_{h\to 0, h<0} \dfrac{x_b\big(t+h\big)-x_b\big(t\big)}{h} 
	\end{equation*}
	exist at $t=z_1(b)$, and both are positive:
    \begin{equation*}
	    \partial_+ x_b\big(z_1(b)\big)>0 \text{ and } \partial_- x_b\big(z_1(b)\big)>0.
    \end{equation*}
\end{prop}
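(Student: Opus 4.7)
The plan is to compute the derivative of $x_b$ at $z_1(b)$ directly from \eqref{eq:xb}, read off the signs of the two boundary values of $x_b$ that appear, and then invoke the negative-feedback property of $f$. By Remark \ref{rem:boundontau}, $z_1(b)\in(1-\tau-\varepsilon/2,\,1-\tau+\varepsilon/2)$, so the two arguments
\[
s_- := z_1(b) - 1 - \tfrac{\varepsilon}{2}\in(-\tau-\varepsilon,-\tau),\qquad s_+ := z_1(b) - 1 + \tfrac{\varepsilon}{2}\in(-\tau,-\tau+\varepsilon)
\]
satisfy $s_-\in[-1-\varepsilon/2,-\tau]$ and $s_+\in(-\tau,z_1(b))$. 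Moreover, since $\tau>0$, one has $z_1(b)<1+\varepsilon/2$ strictly, so the only non-differentiability point of $x_b$ that can coincide with $z_1(b)$ is $z_1(b)=1-\varepsilon/2$, and this requires $\tau\in(0,\varepsilon)$.

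For the sign of $x_b(s_-)$, I would use $x_b(s_-)=b(s_-)\geq \gamma_\tau(s_-)$ and check that $\gamma_\tau(s_-)>0$ strictly. For $\tau\in(0,\varepsilon/2]$ this follows from $\gamma_\tau(s_-)\geq\alpha\varphi_0^{\varepsilon/2}(s_-)=\alpha\sin(\pi(s_-+1+\varepsilon/2))>0$ since $s_-+1+\varepsilon/2\in(0,1)$. For $\tau\in(\varepsilon/2,\varepsilon]$, one checks that $s_-$ lies in the region $(\tau^*+\varepsilon/2,-\tau)$ where the piecewise definition reduces to a convex combination of $\varphi_0^{\varepsilon/2}$ and $\varphi_0^\tau$, both of which evaluate to strictly positive numbers on $(-\tau-\varepsilon,-\tau)$ (the argument of each sine lies in $(0,1)$); alternatively, the lower minorization stated in the remark after Assumption \ref{as:Balpha} suffices. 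For the sign of $x_b(s_+)$, if $s_+\in(-\tau,0]$ then $x_b(s_+)=b(s_+)\leq\alpha\varphi_0^\tau(s_+)<0$ because $\sin(\pi(t+1+\tau))<0$ on $(-\tau,0)$; if $s_+\in(0,z_1(b))$ then $x_b(s_+)<0$ by the very definition of $z_1(b)$. Using the negative-feedback property $xf(x)<0$, we conclude $f(x_b(s_+))>0$ and $f(x_b(s_-))<0$, both strictly.

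If $x_b$ is differentiable at $z_1(b)$, the formula $x'_b(z_1(b))=\varepsilon^{-1}[f(x_b(s_+))-f(x_b(s_-))]$ gives $x'_b(z_1(b))>0$ at once. Otherwise $z_1(b)=1-\varepsilon/2$ and $s_+=0$ is the jump point of $x_b$, and I would compute the two half-derivatives
\[
\partial_- x_b\bigl(1-\tfrac{\varepsilon}{2}\bigr)=\tfrac{1}{\varepsilon}\bigl[f(b(0))-f(b(-\varepsilon))\bigr],\qquad
\partial_+ x_b\bigl(1-\tfrac{\varepsilon}{2}\bigr)=\tfrac{1}{\varepsilon}\bigl[f(x_b(0^+))-f(b(-\varepsilon))\bigr].
\]
The three values involved are shown to have definite signs as follows: $b(0)\leq\alpha\varphi_0^\tau(0)=-\alpha\sin(\pi\tau)<0$; $x_b(0^+)=\varepsilon^{-1}\int_{-1-\varepsilon/2}^{-1+\varepsilon/2}f(b(s))\,ds<0$, since $b\geq\gamma_\tau\geq 0$ on this range and $b(-1)\geq\gamma_\tau(-1)>0$ forces the integrand to be strictly negative on a neighborhood of $-1$; finally $b(-\varepsilon)\geq\gamma_\tau(-\varepsilon)>0$ strictly (using $\tau<\varepsilon$ in this case, which keeps the shifted sines away from their zeros at $-\varepsilon$). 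Hence both half-derivatives are strictly positive.

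I expect the main obstacle to be the case analysis for $\gamma_\tau$ when $\tau\in(\varepsilon/2,\varepsilon]$, because the piecewise definition involving the cut-off $\Phi_{a,b}$ makes the strict positivity of $\gamma_\tau$ at $s_-$ and at $-\varepsilon$ less transparent; however the minorization stated in the remark after Assumption \ref{as:Balpha} reduces this to checking positivity of $\alpha\varphi_0^\tau$ and $\alpha\varphi_0^{\varepsilon/2}$ at a few explicit points, which is straightforward.
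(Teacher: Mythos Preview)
Your approach is essentially the same as the paper's: identify the two boundary arguments $s_\pm=z_1(b)-1\pm\varepsilon/2$, determine their signs from the structure of $\mathcal{B}^\alpha$ and the definition of $z_1(b)$, and apply the negative-feedback condition. The paper's version is terser---it simply records that $s_-<-\tau$ and $s_+\in(-\tau,z_1(b))$ and invokes the sign pattern of $b$ and $x_b$ directly, without unpacking the barrier $\gamma_\tau$.

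One small slip: in the case $\tau\in(0,\varepsilon/2]$ you claim $s_-+1+\varepsilon/2\in(0,1)$, but in fact $s_-+1+\varepsilon/2=z_1(b)$, and by Remark~\ref{rem:boundontau} this can lie in $(1,1-\tau+\varepsilon/2)$ when $\tau<\varepsilon/2$, so $\varphi_0^{\varepsilon/2}(s_-)$ need not be positive. The fix is immediate: use $\hat\tau=\tau$ instead, since $s_-+1+\tau=z_1(b)+\tau-\varepsilon/2\in(1-\varepsilon,1)\subset(0,1)$, giving $\gamma_\tau(s_-)\geq\alpha\varphi_0^{\tau}(s_-)>0$. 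With this correction your argument goes through, and the extra barrier verification you provide (especially for the piecewise case $\tau\in(\varepsilon/2,\varepsilon]$) is correct but not strictly needed, since the definition of $\mathcal{B}^\alpha$ already guarantees $b>0$ on $(-1-\varepsilon/2,-\tau)$.
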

\begin{proof}
	Since $b$ is a continuous function on $[-1-\varepsilon/2, 0]$, there are exactly two points where the derivative of $x_b(t)$ might be discontinuous, that are $t=1-\frac{\varepsilon}{2}$ and $t=1 +\frac{\varepsilon}{2}$. Since $z_1(b)< 1-\tau+\frac{\varepsilon}{2}\leq 1+\frac{\varepsilon}{2}$ (see Remark \ref{rem:boundontau}), we have only two cases: either $x_b$ is differentiable at $z_1(b)$, or $z_1(b)=1-\frac{\varepsilon}{2}$. 

	\noindent $\bullet$
If $x_b$ is differentiable at $z_1(b)$, it is clear that
\[
    x_b'(z_1(b)) = \frac{1}{\varepsilon}[ f(x_b(z_1(b)-1+\varepsilon/2)) - f(x_b(z_1(b)-1-\varepsilon/2)) ]>0
\]
	as $z_1(b)-1+\varepsilon/2\in(-\tau,z_1(b))$ and $z_1(b)-1-\varepsilon/2 < -\tau$. 

	\noindent$\bullet$ If $x_b$ is not differentiable at this point, then necessarily $z_1(b)=1-\varepsilon/2$ and we have
	\begin{align*}
		\partial_+ x_b\big(z_1(b)\big) &= \lim_{h\to 0^+} \frac{1}{\varepsilon}\left[\frac{1}{h}\int_0^h f\big(x_b(s)\big)\dd s-\frac{1}{h}\int_{{-\varepsilon}}^{-\varepsilon+h}f\big(b(s)\big)\dd s\right] \\
		&=\frac{1}{\varepsilon} \left[\lim_{h\to 0^+}f\big(x_b(h)\big) - f\left(b\left(-\varepsilon\right)\right)\right] \\
		&= \frac{1}{\varepsilon}\left[f\left(\frac{1}{\varepsilon}\int_{-1-\varepsilon/2}^{-1+\varepsilon/2} f\big(b(s)\big)\dd s \right)-f\left(b\left(-\varepsilon\right)\right)\right], 
	\end{align*}
	and clearly $f\left(\frac{1}{\varepsilon}\int_{-1-\varepsilon/2}^{-1+\varepsilon/2} f\big(b(s)\big)\dd s \right)>0$ since $b(s)>0$ on $(-1-\varepsilon/2, -1+\varepsilon/2)$, while $f\left(b\left(-\varepsilon\right)\right)<0$. Similarly, 
	\begin{equation*}
		\partial_- x_b\big(z_1(b)\big) = \frac{1}{\varepsilon}\left[f\big(b(0)\big)-f(b(-\varepsilon))\right]>0. 
	\end{equation*}
	This finishes the proof of Proposition \ref{deriv_x}
\end{proof}

Next, we can define the following operator
\begin{equation}\label{eq:poincare_operator}
    \begin{array}{ccl}
         \Fcal \colon \B^{\alpha} \subseteq C_0([-1-\varepsilon/2,0])& \longrightarrow & C_0([-1-\varepsilon/2,0])  \\
         b& \longmapsto &x_b(z_1(b) +1+\varepsilon/2+\cdot)=: \Fcal (b),
    \end{array}
\end{equation}
which is, actually, the Poincaré map, or first-return map, associated to \eqref{eq:prob}.

\subsection{Continuity and compactness of the Poincaré map}

This paragraph is devoted to the continuity of the Poincaré operator. We first establish some properties related to the continuous dependence of $x_b$ on the initial data $b$. More precisely the next lemma holds

\begin{lem}\label{lem31}
    Let $R\geq \max_{|x|\leq A_0}|f(x)|$. For every $T>0$ and $\eta>0$, there exists $\delta>0$ such that if $b,\psi\in \B^{\alpha}$, $\|b\|,\|\psi\|\leq R$, with $\|b-\psi\|<\delta$ then
    \[
        |x_b(t) - x_\psi(t)| < \eta, \quad \forall~t\in[0,T].
    \]
\end{lem}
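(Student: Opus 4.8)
The plan is to prove the estimate interval by interval, marching forward in blocks of length $\varepsilon$ (or $1-\varepsilon/2$, whatever is convenient), and in each block using the fact that $f$ is uniformly continuous on the compact set $[-R,R]$, combined with the uniform bound $|x_b(t)|,|x_\psi(t)|\le R$ for all $t\ge 0$ guaranteed by Proposition \ref{unif_bound}. Concretely, first fix $T>0$ and $\eta>0$; since we only need the estimate up to time $T$, it suffices to cover $[0,T]$ by finitely many intervals. Because $f$ is continuous, hence uniformly continuous on $[-R,R]$, there is a modulus of continuity $\omega_f$ with $|f(u)-f(v)|\le\omega_f(|u-v|)$ for $u,v\in[-R,R]$, and $\omega_f(s)\to 0$ as $s\to 0^+$.

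For $t\in[0,1-\varepsilon/2]$ the argument $t-1\pm\varepsilon/2$ lies in $[-1-\varepsilon/2,0)$, so from \eqref{eq:xb},
\[
    |x_b(t)-x_\psi(t)| \le \frac{1}{\varepsilon}\int_{t-1-\varepsilon/2}^{t-1+\varepsilon/2} |f(b(s))-f(\psi(s))|\,\dd s \le \omega_f(\|b-\psi\|) \le \omega_f(\delta).
\]
This handles the first interval. For $t$ in the next interval $[1-\varepsilon/2,\,2-\varepsilon/2]$, the delayed arguments $t-1\pm\varepsilon/2$ now range over $[-\varepsilon/2,\,1-\varepsilon/2]$, which straddles the jump point $0$; but since the restrictions of $x_b$ and $x_\psi$ to $[-1-\varepsilon/2,0)$ and to $[0,+\infty)$ are each continuous and we already control $|x_b-x_\psi|$ on $[-1-\varepsilon/2,0)$ (by $\delta$) and on $[0,1-\varepsilon/2]$ (by $\omega_f(\delta)$), we again get
\[
    |x_b(t)-x_\psi(t)| \le \omega_f\!\big(\max\{\delta,\,\omega_f(\delta)\}\big).
\]
Iterating, after $k$ steps the bound is $\omega_f^{(k)}(\delta)$, an iterated modulus of continuity, which still tends to $0$ as $\delta\to 0^+$ because each application of $\omega_f$ is continuous at $0$ with value $0$. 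Since $[0,T]$ is covered by $k_0 = \lceil (T+\varepsilon/2)/(1-\varepsilon/2)\rceil$ such blocks (a finite number depending only on $T$ and $\varepsilon$), choosing $\delta$ small enough that $\omega_f^{(k_0)}(\delta)<\eta$ gives the claim on all of $[0,T]$.

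The only mildly delicate point — not really an obstacle — is bookkeeping the fact that $x_b$ is discontinuous at $t=0$, so one must track the sup-norm difference on $[-1-\varepsilon/2,0)$ and on $[0,\infty)$ separately rather than treating $x_b$ as a single continuous function; but as shown above, both quantities are controlled, so the induction goes through. Note also that the hypothesis $b,\psi\in\mathcal B^\alpha$ is not essential here: only $\|b\|,\|\psi\|\le R$ (which forces $\|b-\psi\|$-closeness to be meaningful) and the a priori bound of Proposition \ref{unif_bound} are used; membership in $\mathcal B^\alpha$ merely ensures the lemma is applied in the relevant setting. Finally, to obtain $\delta$ uniformly in $b,\psi$ one simply observes that $\omega_f$ depends only on $R$ and $f$, not on the particular functions, so the chosen $\delta$ works for all admissible pairs simultaneously.
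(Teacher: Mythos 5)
Your proof is correct and follows essentially the same route as the paper: an induction over time blocks of length $1-\varepsilon/2$, using the uniform continuity of $f$ on $[-R,R]$ together with the a priori bound of Proposition \ref{unif_bound}, where your iterated modulus $\omega_f^{(k)}$ is just an explicit bookkeeping of the paper's step of shrinking $\delta$ at each stage of the induction. Only watch the length of the second block: it should be $[1-\varepsilon/2,\,2-\varepsilon]$ (length $1-\varepsilon/2$), not $[1-\varepsilon/2,\,2-\varepsilon/2]$, so that the delayed arguments $t-1+\varepsilon/2$ remain within the range already controlled by the previous step.
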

\begin{proof}
We will prove that for every $n\in\N$ and $\eta>0$ there exists $\delta>0$ such that if $\|b-\psi\| < \delta$ then 
\[
    |x_b(t)-x_\psi(t)|<\eta, \quad \forall~t\in[0,n(1-\varepsilon/2)],
\]
by the means of induction. For the case $n=1$, as $f$ is uniformly continuous on $[-R,R]$, there exists $\delta>0$ such that if $|x-y|<\delta$ with $x,y\in[-R,R]$ then
\[
        |f(x)-f(y)|<\eta.
\]
Thus, setting $\|b-\psi\| < \delta$ we obtain
\begin{align}\label{eq:estim_continuity}
    |x_b(t)-x_\psi(t)| &\leq \frac{1}{\varepsilon} \int_{t-1-\varepsilon/2}^{t-1+\varepsilon/2} |f(b(s))-f(\psi(s))| \dd s \leq \eta,
\end{align}
for all $t\in[0,1-\varepsilon/2]$.

Next, let us assume that the result is true for certain $n\in\N$ and we will prove it for $n+1$. So, from the induction hypothesis, we deduce the existence of $\overline{\delta}\in (0,\delta)$ such that if $\|b-\psi\| < \overline{\delta}$ then
\[
    |x_b(t) - x_\psi(t)| < \min\{\delta,\eta\} \quad \forall~ t\in[0,n(1-\varepsilon/2)],
\]
where $\delta$ is as before. With this choice, as $|x_b(t)|,|x_\psi(t)|\leq R$ for $t\in[0,n(1-\varepsilon/2)]$ (see Proposition \ref{unif_bound}) then
\[
    |x_b(t) - x_\psi(t)| \leq \eta,
\]
for all $t\in[n(1-\varepsilon/2, (n+1)(1-\varepsilon/2)]$, as in \eqref{eq:estim_continuity}.
\end{proof}

\begin{thm}\label{teo:11}
    The map $b \mapsto z_1(b)$ is continuous from $\B^{\alpha}$ into $\R$.
\end{thm}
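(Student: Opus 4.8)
The plan is to combine two results already at our disposal: the uniform-on-compact-intervals continuous dependence of the continuation $x_b$ on the initial datum $b$ (Lemma~\ref{lem31}), and the transversality of the first positive zero supplied by Proposition~\ref{deriv_x}, which asserts that $x_b$ has strictly positive (one-sided) derivative at $z_1(b)$. The argument is the familiar ``a transversal zero depends continuously on parameters'', but it is worth spelling out because of the $\sup$-form of the definition \eqref{eq:z_1}.

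Fix $b\in\mathcal{B}^\alpha$ and abbreviate $z_1:=z_1(b)$; by Remark~\ref{rem:boundontau} we have $z_1\in(1-\tfrac{3\varepsilon}{2},1+\tfrac{\varepsilon}{2})$, so in particular $z_1>0$. Since continuity at $b$ only has to be checked against arbitrarily small tolerances, I would fix a small $\eta_0>0$, to be shrunk finitely many times, with $\eta_0<z_1$, and aim to produce $\delta>0$ such that $\psi\in\mathcal{B}^\alpha$, $\|\psi-b\|<\delta$ imply $|z_1(\psi)-z_1|<\eta_0$. Two facts about $x_b$ drive the argument. First, because $x_b<0$ on $[0,z_1)$ and $x_b$ is continuous on $[0,+\infty)$, the number $\mu_1:=-\max_{t\in[0,z_1-\eta_0]}x_b(t)$ is strictly positive. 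Second, by Proposition~\ref{deriv_x} the right derivative (or right half-derivative, in the non-differentiable case $z_1=1-\varepsilon/2$) of $x_b$ at $z_1$ is strictly positive; since $x_b(z_1)=0$, this gives $\rho_0>0$ with $x_b(z_1+\rho)>0$ for all $\rho\in(0,\rho_0)$, so after requiring $\eta_0<\rho_0$ the point $t_\ast:=z_1+\eta_0/2$ satisfies $x_b(t_\ast)>0$.

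With these in hand I would set $\eta:=\tfrac12\min\{\mu_1,x_b(t_\ast)\}>0$, choose $R\geq\max\{\|b\|+1,\ \max_{|x|\leq A_0}|f(x)|\}$, and invoke Lemma~\ref{lem31} on $[0,T]$ with $T:=z_1+\eta_0$: this yields $\delta>0$ (shrunk so that also $\delta\leq1$, hence $\|\psi\|\leq\|b\|+1\leq R$) for which $\|\psi-b\|<\delta$ forces $|x_\psi(t)-x_b(t)|<\eta$ on $[0,T]$. Then on $[0,z_1-\eta_0]$ one has $x_\psi\leq x_b+\eta\leq-\mu_1/2<0$, so by continuity of $x_\psi$ the strict negativity persists slightly past $z_1-\eta_0$ and hence $z_1(\psi)>z_1-\eta_0$; while $x_\psi(t_\ast)\geq x_b(t_\ast)-\eta\geq x_b(t_\ast)/2>0$ with $t_\ast<T$, so by the defining formula \eqref{eq:z_1} for $z_1(\psi)$ as the supremum of $t$ with $x_\psi<0$ on $[0,t]$, we get $z_1(\psi)<t_\ast<z_1+\eta_0$. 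Together these give $|z_1(\psi)-z_1|<\eta_0$, and since $\eta_0$ was an arbitrary sufficiently small tolerance, $z_1(\cdot)$ is continuous at $b$.

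I do not expect a serious obstacle: the essential input is Proposition~\ref{deriv_x}, which forbids a tangential zero and is exactly what makes the upper bound $z_1(\psi)<z_1+\eta_0$ attainable (without it, $z_1(\psi)$ could jump downward). The only points requiring a little care are the translation between ``$x_\psi$ negative on $[0,z_1-\eta_0]$ / positive at $t_\ast$'' and the corresponding inequalities for the supremum $z_1(\psi)$, and checking that the sup-norm bound $R$ demanded by Lemma~\ref{lem31} holds uniformly for $\psi$ in a small $\|\cdot\|$-neighbourhood of $b$; both are routine.
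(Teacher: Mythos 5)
Your proposal is correct and follows essentially the same route as the paper: it combines the continuous dependence of $x_b$ on $b$ over compact time intervals (Lemma~\ref{lem31}) with the transversality of the first zero from Proposition~\ref{deriv_x}, then sandwiches $z_1(\psi)$ between $z_1(b)-\eta_0$ (via strict negativity on $[0,z_1(b)-\eta_0]$) and a point slightly to the right of $z_1(b)$ where $x_b$ is strictly positive. The only difference from the paper's write-up is your explicit bookkeeping of the uniform bound $R$ for nearby $\psi$, which the paper dispatches with the remark that $b$ is fixed.
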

\begin{proof}
Let $b\in \B^{\alpha}$ and $\mu>0$. As a consequence of Proposition \ref{deriv_x}, there  exists $r\in(0,\mu/2)$ such that $x_b(z_1(b)+r)>0$. Noting that
\[
    0< \min\left\{ \min_{[0,z_1(b)-r]} -x_b(t), x_b(z_1(b)+r) \right\},
\]
it is possible to apply the previous lemma in the interval $[0,z_1(b)+\mu]$ and for a suitable $R$ (since $b$ is fixed), thus there exists $\delta>0$ such that, if $\|b-\psi\| < \delta$,
\[
    x_\psi(t)<0 \quad \forall ~t\in[0,z_1(b)-r], \quad\text{and}\quad x_\psi(z_1(b)+r)>0,
\]
thus $z_1(\psi)\in(z_1(b)-r,z_1(b)+r)$, and therefore $|z_1(\psi) - z_1(b)|<\mu$.
\end{proof}

\begin{lem}\label{lem21}
    Let $R\geq \max_{|x|\leq A_0} |f(x)|$. For every  $b\in \B^\alpha$, $T>0$ and $\eta>0$ there exists $\delta>0$ such that if $b,\psi\in \B^\alpha$, $\|b\|,\|\psi\|\leq R$, with $\|b-\psi\| < \delta$ then
    \[
        |x_b(t) - x_\psi(s)| < \frac{2 R}{\varepsilon}|t-s| + \eta \quad\forall~t,s\in[0,T]~\text{with}~ |t-s|<\varepsilon.
    \]
\end{lem}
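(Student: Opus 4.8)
The plan is to combine the uniform-in-time closeness estimate of Lemma \ref{lem31} with a Lipschitz-in-time estimate for a single continuation $x_\psi$, using the triangle inequality in the form
\[
    |x_b(t) - x_\psi(s)| \leq |x_b(t) - x_\psi(t)| + |x_\psi(t) - x_\psi(s)|.
\]
First I would bound the first term: given $\eta>0$ and $T>0$, Lemma \ref{lem31} yields $\delta>0$ such that $\|b-\psi\|<\delta$ forces $|x_b(t)-x_\psi(t)|<\eta$ for all $t\in[0,T]$; this handles the $\eta$ in the claimed inequality. For the second term I would invoke the explicit formula for the derivative of the continuation, namely
\[
    x_\psi'(r) = \frac{1}{\varepsilon}\big[ f(x_\psi(r-1+\varepsilon/2)) - f(x_\psi(r-1-\varepsilon/2)) \big],
\]
valid for all $r\geq 0$ outside the finite exceptional set $\{0,\,1-\varepsilon/2,\,1+\varepsilon/2\}$. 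Since $\|\psi\|\leq R$ with $R\geq\max_{|x|\leq A_0}|f(x)|$, Proposition \ref{unif_bound} gives $|x_\psi(r)|\leq R$ for all $r\geq 0$, hence $|f(x_\psi(r-1\pm\varepsilon/2))|\leq R$ as well (the bound $R$ is preserved by $f$ on $[-R,R]$ by the argument in the proof of Proposition \ref{unif_bound}), so $|x_\psi'(r)| \leq 2R/\varepsilon$ wherever the derivative exists.

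The mild technical point is that $x_\psi$ is only piecewise $C^1$ on $[0,\infty)$, with possible corners at $1-\varepsilon/2$ and $1+\varepsilon/2$ and a possible jump at $0$; but on $[0,+\infty)$ the restriction of $x_\psi$ is continuous, and it is Lipschitz on each of the finitely many subintervals $[0,1-\varepsilon/2]$, $[1-\varepsilon/2,1+\varepsilon/2]$, $[1+\varepsilon/2,T]$ with constant $2R/\varepsilon$; a continuous function that is Lipschitz on each piece of a finite partition of an interval is Lipschitz on the whole interval with the same constant. Therefore $|x_\psi(t)-x_\psi(s)|\leq \frac{2R}{\varepsilon}|t-s|$ for all $t,s\in[0,T]$, and in particular for $|t-s|<\varepsilon$. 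Combining the two bounds gives exactly
\[
    |x_b(t)-x_\psi(s)| < \frac{2R}{\varepsilon}|t-s| + \eta
\]
for all $t,s\in[0,T]$ with $|t-s|<\varepsilon$, provided $\|b-\psi\|<\delta$.

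I do not expect a genuine obstacle here; the only thing to be careful about is the bookkeeping at the non-differentiability points (absorbing the Lipschitz estimate across the corners, as above) and making sure the constant $R$ is genuinely preserved under $f$ so that $x_\psi'$ is controlled by $2R/\varepsilon$ rather than by $\frac{2}{\varepsilon}\max_{|x|\leq R}|f(x)|$, which is handled by the hypothesis $R\geq\max_{|x|\leq A_0}|f(x)|$ together with the subdiagonal condition \eqref{diag_cond} exactly as in Proposition \ref{unif_bound}. The restriction $|t-s|<\varepsilon$ in the statement plays no role in the proof (the estimate holds for all $t,s\in[0,T]$); it is presumably stated this way because that is the regime in which the lemma will be applied, e.g. to control $\|\Fcal(b)-\Fcal(\psi)\|$ near $t=z_1$ once $|z_1(b)-z_1(\psi)|$ is known to be small by Theorem \ref{teo:11}.
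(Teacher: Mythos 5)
Your proof is correct and follows essentially the same route as the paper: a triangle-inequality split into the uniform closeness from Lemma \ref{lem31} plus a $\tfrac{2R}{\varepsilon}|t-s|$ bound on the time variation of one continuation (the paper varies $t$ in $x_b$ and bounds the variation directly from the overlap of the two integration intervals, which is where it uses $|t-s|<\varepsilon$; you vary $t$ in $x_\psi$ and get the same constant from the a.e.\ derivative bound, which is an equivalent computation). Your observation that $\max_{|x|\le R}|f(x)|\le R$ via \eqref{diag_cond} is exactly the step the paper also relies on in its final inequality.
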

\begin{proof}
Observe that if $|t-s|<\varepsilon$ and, without loss of generality, if we assume that $t<s$ then $s-\varepsilon/2<t+\varepsilon/2$. Thus, taking $\delta>0$ given by Lemma \ref{lem31}, we have
\begin{align*}
    |x_b(t)-x_\psi(s)| &\leq |x_b(t) - x_b(s)| + |x_b(s) - x_\psi(s)| \\
    & < \frac{1}{\varepsilon} \left| \int_{t-1-\varepsilon/2}^{s-1-\varepsilon/2} f(x_b(y))~dy - \int_{t-1+\varepsilon/2}^{s-1+\varepsilon/2} f(x_b(y))~dy \right| +\eta \\
    & \leq \frac{2(s-t)}{\varepsilon} \max_{|x|\leq R}|f(x)| + \eta \\
    &\leq \frac{2R}{\varepsilon}(s-t) + \eta,
\end{align*}
for $\|b-\psi\|< \delta$.
\end{proof}

\begin{thm}
    The map $b\mapsto \Fcal (b)$ is continuous from $\B^{\alpha}$ into $C_0([-1-\varepsilon/2,0])$.
\end{thm}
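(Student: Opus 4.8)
The plan is to combine the continuity of the first-return time $b\mapsto z_1(b)$ (Theorem~\ref{teo:11}) with the continuous dependence of the continuation on the initial data (Lemma~\ref{lem31}), using the Lipschitz-in-time estimate of Lemma~\ref{lem21} to control the mismatch created by the shift by two different amounts $z_1(b)$ and $z_1(\psi)$. Fix $b\in\B^\alpha$ and $\eta>0$. Since all our functions are bounded by a fixed $R$ (take $R\geq\max_{|x|\leq A_0}|f(x)|$ and large enough that $\|b\|\leq R$; recall Proposition~\ref{unif_bound}), and since $z_1(\psi)$ stays in the bounded interval $(1-\tau-\varepsilon/2,1-\tau+\varepsilon/2)$ for every $\psi\in\B^\alpha$ by Remark~\ref{rem:boundontau}, there is a uniform horizon $T:=2+\varepsilon$ (say) beyond which we never need to evaluate $x_b$ or $x_\psi$ when forming $\Fcal(b)$ or $\Fcal(\psi)$ on $[-1-\varepsilon/2,0]$.

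The key estimate is the triangle inequality
\[
    |\Fcal(b)(t)-\Fcal(\psi)(t)| = \big|x_b(z_1(b)+1+\varepsilon/2+t) - x_\psi(z_1(\psi)+1+\varepsilon/2+t)\big|,
\]
which I would split into $|x_b(\sigma_b)-x_b(\sigma_\psi)|+|x_b(\sigma_\psi)-x_\psi(\sigma_\psi)|$ with $\sigma_b:=z_1(b)+1+\varepsilon/2+t$ and $\sigma_\psi:=z_1(\psi)+1+\varepsilon/2+t$. First, by Theorem~\ref{teo:11} choose $\delta_1>0$ so that $\|b-\psi\|<\delta_1$ forces $|z_1(b)-z_1(\psi)|<\min\{\varepsilon,\varepsilon\eta/(4R)\}$; then the first term is at most $\frac{2R}{\varepsilon}|z_1(b)-z_1(\psi)|<\eta/2$ — here one may either invoke Lemma~\ref{lem21} directly (with $b=\psi$), or simply note that $x_b$ is Lipschitz with constant $2R/\varepsilon$ on $[0,T]$ since $|x_b'|\leq\varepsilon^{-1}(|f(x_b(\cdot-1+\varepsilon/2))|+|f(x_b(\cdot-1-\varepsilon/2))|)\leq 2R/\varepsilon$ away from the two non-differentiability points. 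For the second term, apply Lemma~\ref{lem31} on $[0,T]$ to obtain $\delta_2>0$ with $\|b-\psi\|<\delta_2 \Rightarrow \sup_{[0,T]}|x_b-x_\psi|<\eta/2$. Taking $\delta:=\min\{\delta_1,\delta_2\}$ and observing that $\sigma_\psi\in[0,T]$ for all $t\in[-1-\varepsilon/2,0]$, we conclude $\|\Fcal(b)-\Fcal(\psi)\|<\eta$, which is continuity at $b$.

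The only mild subtlety — and the step I would be most careful about — is the bookkeeping that guarantees all time arguments stay inside a \emph{fixed} interval $[0,T]$ on which Lemmas~\ref{lem31} and~\ref{lem21} apply with a single $R$: this is where one uses that $z_1$ is bounded on $\B^\alpha$ (Remark~\ref{rem:boundontau}) and that the output functions are evaluated only on $[-1-\varepsilon/2,0]$, so $\sigma_b,\sigma_\psi\leq z_1+1+\varepsilon/2 < 2+\varepsilon/2 \leq T$. One should also note in passing that $\Fcal(b)$ indeed lands in $C_0([-1-\varepsilon/2,0])$: it is continuous because $x_b$ is continuous on $[0,+\infty)$ and the shift $z_1(b)+1+\varepsilon/2>0$, and it vanishes at the left endpoint because $\Fcal(b)(-1-\varepsilon/2)=x_b(z_1(b))=0$ by the definition of $z_1(b)$. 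With these observations the proof is complete; everything else is the routine $\varepsilon$-$\delta$ assembly sketched above.
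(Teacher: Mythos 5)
Your proposal is correct and follows essentially the same route as the paper: the paper likewise combines the continuity of $b\mapsto z_1(b)$ with Lemma~\ref{lem21} to get the bound $\frac{2R}{\varepsilon}|z_1(b)-z_1(\psi)|+\eta/2$, and your triangle-inequality splitting into $|x_b(\sigma_b)-x_b(\sigma_\psi)|+|x_b(\sigma_\psi)-x_\psi(\sigma_\psi)|$ is just Lemma~\ref{lem21} unfolded. Your extra bookkeeping on the uniform horizon $T$ and the check that $\Fcal(b)(-1-\varepsilon/2)=x_b(z_1(b))=0$ are welcome details that the paper leaves implicit.
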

\begin{proof}
Let $b\in\B^{\alpha}$ and $\eta>0$. Using the continuity of the map $b\to z_1(b)$ we know there exists $\delta>0$ such that if $\|b-\psi\|<\delta$ then
\[
|z_1(b)- z_1(\psi)|<\min \left\{ \varepsilon, \frac{\eta\varepsilon}{4R} \right\},
\]
where $R>0$ is a suitable fixed constant which satisfies $R> \|b\|$ and $R\geq \max_{|x|\leq A_0} |f(x)|$.

Moreover, using the previous lemma with, and taking $\delta$ smaller if needed, it follows that for $\|b - \psi\|<\delta$ and $t\in[-1-\varepsilon/2,0]$, then
\begin{align*}
    |\Fcal(b)(t) - \Fcal(\psi)(t)| & = |x_b(z_1(b)+1+\varepsilon/2 +t) - x_\psi(z_1(\psi) + 1 +\varepsilon/2 + t)| \\
    &< \frac{2R}{\varepsilon}|z_1(b) - z_1(\psi)| + \frac{\eta}{2} \\
    &< \eta,
\end{align*}
which shows that $\|\Fcal(b) - \Fcal(\psi)\| \leq\eta$ and therefore $\Fcal$ is continuous.
\end{proof}

Finally, another important property of the map $\Fcal$ is its compactness.

\begin{thm}
	The set $\mathcal{F}(\B^\alpha)$ is compact for the topology of $C_0([-1-\varepsilon/2, 0])$.  
\end{thm}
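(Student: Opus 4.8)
The natural approach is the Arzel\`a--Ascoli theorem: I will prove that the family $\mathcal{F}(\mathcal{B}^\alpha)$ is uniformly bounded and uniformly Lipschitz in $C([-1-\varepsilon/2,0])$, so that it is relatively compact, and moreover that it lies inside a fixed compact subset $\mathcal{K}\subseteq C_0([-1-\varepsilon/2,0])$, whence $\overline{\mathcal{F}(\mathcal{B}^\alpha)}\subseteq\mathcal{K}$ is compact (this is the form in which the statement is used in Section~\ref{sec:proofs}; note $\mathcal{B}^\alpha$ itself is not closed, as $\tau$ ranges over the non-closed interval $(0,\varepsilon]$). Fix a constant $R$ with $R\geq\max_{|x|\leq A_0}|f(x)|$ and $R\geq\|b\|$ for every $b\in\mathcal{B}^\alpha$; by Proposition~\ref{unif_bound} we then have $|x_b(t)|\leq R$ for all $t\geq 0$ and all $b\in\mathcal{B}^\alpha$, and, arguing with \eqref{diag_cond} as in that proof, $\max_{|x|\leq R}|f(x)|\leq R$. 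Now by \eqref{eq:poincare_operator}, $\mathcal{F}(b)(t)=x_b(z_1(b)+1+\varepsilon/2+t)$, so as $t$ runs over $[-1-\varepsilon/2,0]$ the argument runs over $J_b:=[z_1(b),\,z_1(b)+1+\varepsilon/2]$; by Remark~\ref{rem:boundontau}, $z_1(b)>1-\tau-\varepsilon/2\geq 1-\tfrac{3\varepsilon}{2}>0$ because $\varepsilon<\tfrac14$, so $J_b\subseteq(0,+\infty)$ and therefore $\|\mathcal{F}(b)\|\leq R$ for every $b\in\mathcal{B}^\alpha$ (also $\mathcal{F}(b)(-1-\varepsilon/2)=x_b(z_1(b))=0$, so indeed $\mathcal{F}(b)\in C_0([-1-\varepsilon/2,0])$).

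For equicontinuity I would show that $x_b$ restricted to $J_b$ is $\tfrac{2R}{\varepsilon}$-Lipschitz uniformly in $b$ — essentially the estimate already appearing in Lemma~\ref{lem21}. Indeed $x_b$ is continuous on $J_b$ (its only discontinuity is at $t=0\notin J_b$), it is differentiable on $J_b$ away from the two points $1\pm\varepsilon/2$ with $x_b'(t)=\tfrac1\varepsilon[f(x_b(t-1+\varepsilon/2))-f(x_b(t-1-\varepsilon/2))]$, and at those two points the one-sided derivatives are given by analogous finite expressions. Using $z_1(b)\in(1-\tau-\varepsilon/2,1-\tau+\varepsilon/2)$ with $\tau\in(0,\varepsilon]$ one checks that, for $t\in J_b$, both shifted arguments $t-1\pm\varepsilon/2$ fall either in $[-1-\varepsilon/2,0)$ (where $x_b=b$, hence $|x_b|\leq R$) or in $[0,+\infty)$ (where $|x_b|\leq R$ by Proposition~\ref{unif_bound}); hence $|x_b'(t)|\leq\tfrac2\varepsilon\max_{|x|\leq R}|f(x)|\leq\tfrac{2R}{\varepsilon}$ on $J_b$, and likewise for the one-sided derivatives at $1\pm\varepsilon/2$. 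Consequently $|\mathcal{F}(b)(t_1)-\mathcal{F}(b)(t_2)|\leq\tfrac{2R}{\varepsilon}|t_1-t_2|$ for all $t_1,t_2\in[-1-\varepsilon/2,0]$ and all $b\in\mathcal{B}^\alpha$.

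Combining the two estimates, $\mathcal{F}(\mathcal{B}^\alpha)$ is contained in the set $\mathcal{K}:=\{g\in C_0([-1-\varepsilon/2,0]):\|g\|\leq R\ \text{and}\ g\ \text{is}\ \tfrac{2R}{\varepsilon}\text{-Lipschitz}\}$, which is bounded, equicontinuous and closed under uniform limits, hence compact by Arzel\`a--Ascoli; this yields the claim. I expect the only mildly delicate point to be the interval bookkeeping that localises the shifted arguments $t-1\pm\varepsilon/2$ (for $t\in J_b$) inside regions where $|x_b|\leq R$, together with the treatment of the two points $1\pm\varepsilon/2$ where $x_b'$ jumps; both are routine given Remark~\ref{rem:boundontau} and the explicit formula for $x_b'$, while everything else is a direct application of Proposition~\ref{unif_bound} and the Arzel\`a--Ascoli theorem.
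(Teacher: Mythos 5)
Your proof is correct and takes essentially the same route as the paper's: a uniform sup-norm bound $R$ obtained via Proposition \ref{unif_bound}, the almost-everywhere derivative bound $\frac{2R}{\varepsilon}$ on $\Fcal(b)$, and then the Arzelà--Ascoli theorem. Your additional care in enclosing the image in a closed, equicontinuous, bounded set $\mathcal{K}$ (so that the closure is genuinely compact) and in localising the shifted arguments is a slightly more detailed write-up of the same argument.
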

\begin{proof}
Let $(b_n)_n\subseteq \B^{\alpha}$ be a bounded sequence, say by $R\geq \max_{|x|\leq A_0}|f(x)|$. Observe that the sequence $(\Fcal(b_n))_n$ has almost everywhere uniformly bounded derivatives, that is, all but the points at which its derivative does not exists; indeed,
\[
    |\Fcal(b_n)'(t)| = \frac{1}{\varepsilon}| f(x_{b_n}(t-1+\varepsilon/2)) - f(x_{b_n}(t-1-\varepsilon/2)) | \leq \frac{2}{\varepsilon} \sup_{x\in[-R,R]} |f(x)|\leq \frac{2R}{\varepsilon}.
\]

This shows that $(\Fcal(b_n))_n$ is an equicontinuous and bounded family in $C_0([-1-\varepsilon/2,0])$, and therefore it possesses a uniformly convergent subsequence by the Arzelà-Ascoli Theorem.  Thus $\mathcal{F}(\B^\alpha)$ is compact.
\end{proof}

\section{Homeomorphisms and Invariance}\label{sec:Invariance}

\subsection{A priori estimates for the extensions}

By definition, for every $\delta_0>0$ there exists an $\alpha_0>0$ such that
\[
    \frac{f(x)}{x} \leq f'(0) + \delta_0 <0 \quad \forall |x|\leq \alpha_0.
\]
Hereinafter, we assume that $f'(0)<-2$, and $\delta_0,\varepsilon,\alpha$ are sufficiently small so that the following is verified:
\begin{itemize}
    \item $\lambda_0\left( 1+\frac{\delta_0}{f'(0)} \right) \geq 2$.
    \item $\alpha\lambda_0\leq \min\{\alpha_0,a_0\}$, where $\alpha_0$ is as in the previous note and $a_0$ is as in \eqref{eventual_posi}.
\end{itemize}

\begin{remark}
The conditions imposed over $\varepsilon$ and $\delta_0$ also implies that
\[
    \lambda_0\left( 1+\frac{\delta_0}{f'(0)} \right) \cos\left( \frac{\pi\varepsilon}{2} \right)>1.
\]
Indeed, as $\varepsilon\leq 1/4$ then $\cos(\pi\varepsilon/2)\geq \cos(\pi/8)>1/2$. In general, this holds for every $\varepsilon<2/3$.
\end{remark}

\begin{remark}
With this choice for $\alpha$, every continuous function $b\colon [0,2]\to \R$ for which $\frac{b(t)}{\alpha\lambda_0 \varphi_0(t)}\geq 1$ for $t\in(0,2)\setminus\{1\}$, also satisfies $\frac{f(b(t))}{f(\alpha\lambda_0 \varphi_0(t))}\geq 1$ for $t\in(0,2)\setminus\{1\}$.

Indeed, let $t\in(0,1)$, then we have $b(t)\geq \alpha\lambda_0\varphi_0(t)>0$. If $b(t)\geq a_0$ then $|f(b(t))|\geq |f(\alpha\lambda_0\varphi_0(t))|$ (see \eqref{eventual_posi}), and from the negative-feedback condition we get the claim. Reciprocally, if $b(t)\leq a_0$ then the statement follows from the monotonicity and negative-feedback condition on $f$. An analogous argument proves the statement for $t\in(1,2)$.
\end{remark}

The previous remark justifies the manipulations done in the next proposition. Mainly, it allows us to establish the order relation between $f(x_b(t))$ and $f(\alpha\lambda_0 \varphi_0^\tau(t))$.

\begin{prop}[Estimates for $\tau\in(0,\varepsilon/2{]}$]\label{prop:estimates1}
Let $b\in\B^\alpha$ with $\tau\in(0,\varepsilon/2]$. Then, the extension $x_b$ verifies the following estimates:
\begin{align}
    x_b(t) &\leq \lambda_0 \left( 1+\frac{\delta_0}{f'(0)} \right) \min_{\hat{\tau}\in [\tau,\varepsilon/2]} \alpha \varphi_0^{\hat{\tau}} (t), \quad t\in[0,1-\tau-\varepsilon/2]; \label{estimate1} \\ 
    \frac{x_b(t)}{\alpha \varphi_0^{1-z_1(b)}(t)} &>1, \quad t\in(1-\tau-\varepsilon/2,1-\tau+\varepsilon/2) \setminus \{ z_1(b) \}; \label{stepness} \\     
    x_b(t) &\geq \alpha\lambda_0 \left( 1+\frac{\delta_0}{f'(0)} \right) \varphi_0^\tau (t), \quad t\in[1-\tau+\varepsilon/2, 2-\tau-\varepsilon];\label{estimate2} \\
    x_b(t) &\geq \alpha\lambda_0 \left( 1+\frac{\delta_0}{f'(0)} \right)\varphi_0^{1-z_1(b)}(t), \quad t\in(2-\tau-\varepsilon,z_1(b)+1-\varepsilon/2]; \label{estimate3} \\ 
    \frac{x_b(t)}{\alpha \varphi_0^{-z_2(b)}(t)} &>1, \quad t\in(z_1(b)+1-\varepsilon/2,z_1(b)+1+\varepsilon/2) \setminus\{ z_2(b) \} \label{stepness2}.
\end{align}
\end{prop}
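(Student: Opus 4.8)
The strategy is a bootstrapping argument in which we propagate, interval by interval, the lower/upper bounds encoded in the definition of $\B^\alpha$ through one full ``half-period'' of the dynamics, using the monotonicity of the integral operator away from sign changes and a steepness estimate across them. First I would establish \eqref{estimate1}: on $[0,1-\tau-\varepsilon/2]$ the integrand in \eqref{eq:xb} only samples $x_b$ on $[-1-\varepsilon/2,-\tau]$, where by Assumption \ref{as:Balpha}(ii) we have $b(s)\ge\gamma_\tau(s)\ge\min_{\hat\tau\in[\tau,\varepsilon/2]}\alpha\varphi_0^{\hat\tau}(s)>0$ (here I use the Remark describing $\gamma_\tau$); since this lower bound is positive and $\le\alpha\lambda_0$ in sup-norm by the standing assumption $\alpha\lambda_0\le a_0$, the second Remark lets me pass to $f$, and then $f(b(s))\le f'(0)(1+\delta_0/f'(0))\,b(s)$ wherever $|b(s)|\le\alpha_0$. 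Integrating and recalling $M\varphi_0^{\hat\tau}=\lambda_0\varphi_0^{\hat\tau}$ (the operator $M=-f'(0)T$ acts on shifts of $\varphi_0$ exactly by $\lambda_0$, since $T$ commutes with shifts) gives $x_b(t)\le\lambda_0(1+\delta_0/f'(0))\alpha\varphi_0^{\hat\tau}(t)$ for each admissible $\hat\tau$, hence the claimed bound with the minimum; one must check the shift is in the right range so that $\varphi_0^{\hat\tau}\ge0$ on the relevant integration window, which follows from $\tau\le\varepsilon/2$ and $t\le1-\tau-\varepsilon/2$.

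Next, \eqref{stepness} across the first zero $z_1(b)$: on the short interval $(1-\tau-\varepsilon/2,1-\tau+\varepsilon/2)$, which contains $z_1(b)$ by Remark \ref{rem:boundontau}, the function $\varphi_0^{1-z_1(b)}$ is the unique shift of the principal eigenfunction vanishing exactly at $z_1(b)$. I would compare $x_b$ with $\alpha\varphi_0^{1-z_1(b)}$ by a steepness/convexity argument: both vanish at $z_1(b)$, $x_b$ is (one-sidedly) differentiable there with a strictly positive derivative by Proposition \ref{deriv_x}, and one estimates $x_b'(t)$ on this window from below using the explicit formula $x_b'(t)=\varepsilon^{-1}[f(x_b(t-1+\varepsilon/2))-f(x_b(t-1-\varepsilon/2))]$ — the first term is $f$ of a positive quantity bounded below via \eqref{estimate1}-type control, the second is $f$ of a value still governed by $b$ on $[-1-\varepsilon/2,-\tau]$ — and compares with the (bounded) derivative of $\alpha\varphi_0^{1-z_1(b)}$. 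The point is that $x_b$ is \emph{steeper} than the eigenfunction near its zero, so their ratio exceeds $1$ on both sides; because the eigenfunction changes sign at $z_1(b)$ the inequality $x_b/(\alpha\varphi_0^{1-z_1(b)})>1$ is sign-consistent on the punctured interval.

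Estimates \eqref{estimate2}, \eqref{estimate3} are then obtained the same way as \eqref{estimate1} but now pushing a \emph{lower} bound forward: on $[1-\tau+\varepsilon/2,2-\tau-\varepsilon]$ the integrand samples $x_b$ on a set where it is already known to be $\ge\alpha\varphi_0^\tau$ (from Assumption \ref{as:Balpha}(ii) on $[-\tau\text{-shifted}]$ together with \eqref{estimate1} giving the crossover behaviour), monotonicity of $T$ on sign-definite data applies, $f$ reverses the inequality, and applying $M$ multiplies by $\lambda_0$ and inserts the factor $(1+\delta_0/f'(0))$; for \eqref{estimate3} one instead uses the barrier $\varphi_0^{1-z_1(b)}$ just constructed in \eqref{stepness} as the comparison input on the overlap region $(2-\tau-\varepsilon,z_1(b)+1-\varepsilon/2]$. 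Finally \eqref{stepness2} is literally \eqref{stepness} translated forward by one half-period: repeat the steepness argument at the second zero $z_2(b)$, whose location in $(z_1(b)+1-\varepsilon/2,z_1(b)+1+\varepsilon/2)$ follows from the now-established positivity of $x_b$ on $[z_1(b)+\varepsilon,z_1(b)+1-\varepsilon/2]$ and a negative-feedback argument as in Remark \ref{rem:boundontau}.

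The main obstacle is the steepness estimates \eqref{stepness} and \eqref{stepness2}: away from the zeros everything is a soft monotonicity-plus-eigenfunction computation, but near a zero the integral operator is not monotone (the integration window straddles a sign change of $x_b$), so one must extract a genuinely quantitative lower bound on $x_b'$ and match it against the correct shift of $\varphi_0$ — and one must verify that this comparison shift is exactly $\varphi_0^{1-z_1(b)}$ (resp.\ $\varphi_0^{-z_2(b)}$), i.e.\ that the bookkeeping of which zero we are at is consistent with the $(1+\delta_0/f'(0))$ slack carried over from the previous interval. Care is also needed at the single non-differentiability point $t=1-\varepsilon/2$, handled via the one-sided derivatives of Proposition \ref{deriv_x}, and in checking that all the shifted eigenfunctions appearing as barriers are nonnegative on the integration windows where they are used, which is where the hypotheses $\varepsilon\in(0,1/4)$ and $\tau\in(0,\varepsilon/2]$ enter.
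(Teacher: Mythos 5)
Your plan follows the paper's proof essentially step for step: propagate the barriers of shifted eigenfunctions through each sign-definite window using the monotonicity of the averaging operator together with the quotient bound $f(x)/x\le f'(0)+\delta_0$, pick up the factor $\lambda_0\bigl(1+\tfrac{\delta_0}{f'(0)}\bigr)$ from the eigenvalue relation, and cross each zero with the derivative (steepness) comparison $|x_b'|>\alpha\pi\ge\alpha|(\varphi_0^{\eta})'|$, which forces at most one intersection with any shift of $\alpha\varphi_0$. One correction to your first step: for $\tau\in(0,\varepsilon/2]$ the barrier is $\gamma_\tau=\max_{\hat\tau\in[\tau,\varepsilon/2]}\alpha\varphi_0^{\hat\tau}$ (the Remark you cite concerns the other regime $\tau\in(\varepsilon/2,\varepsilon]$), and you must use this \emph{max} as the lower bound on $b$; starting from the weaker bound $b\ge\min_{\hat\tau}\alpha\varphi_0^{\hat\tau}$, the chain $f(b)\le f(\min)\le(f'(0)+\delta_0)\min$ followed by integration cannot be converted into the stated $\min_{\hat\tau}$ on the right-hand side of \eqref{estimate1}, because the comparison $\int\min\le\int\varphi_0^{\hat\tau_0}$ reverses after multiplication by the negative constant $f'(0)+\delta_0$. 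With the max as input the argument closes exactly as you outline, and the remaining technical point you correctly flag (merging the $\lambda_0(1+\delta_0/f'(0))\varphi_0^{\tau}$ and $\varphi_0^{1-z_1(b)}$ barriers on the split integration window in \eqref{estimate3}) is handled in the paper by an endpoint comparison using $\lambda_0(1+\delta_0/f'(0))\ge 2$ and the localization $z_1(b)\in(1-\tau-\varepsilon/2,1-\tau+\varepsilon/2)$.
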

\begin{proof}
To prove \eqref{estimate1} let $t\in [0,1-\tau-\varepsilon/2]$. Using Assumption \ref{as:f} and the previous remark we obtain
\begin{align*}
    x_b(t) &= \frac{1}{\varepsilon}\int_{t-1-\varepsilon/2}^{t-1+\varepsilon/2} f(b(s)) \\
    &\leq \frac{1}{\varepsilon}\int_{t-1-\varepsilon/2}^{t-1+\varepsilon/2} f\left( \max_{\hat{\tau}\in[\tau,\varepsilon/2]} \alpha \varphi_0^{\hat{\tau}} (s) \right) ds \\
    &\leq \frac{f'(0)+\delta_0}{\varepsilon}\int_{t-1-\varepsilon/2}^{t-1+\varepsilon/2} \max_{\hat{\tau}\in[\tau,\varepsilon/2]} \alpha \varphi_0^{\hat{\tau}} (s) ~ ds \\
    &\leq \alpha (f'(0) + \delta_0)\frac{\lambda_0}{f'(0)} \min_{\hat{\tau}\in[\tau,\varepsilon/2]} \varphi_0^{\hat{\tau}}(t),
\end{align*}
and thus \eqref{estimate1} follows. In addition, this inequality implies that
\begin{equation}\label{eq:inter-estimate}
    x_b(t) \leq \alpha\varphi_0^\tau(t), \quad \forall t\in[0,1-\tau-\varepsilon/2],
\end{equation}
due to the condition on $\delta_0$.

To prove \eqref{stepness} let $t\in(1-\tau-\varepsilon/2,1-\tau+\varepsilon/2)$. From the previous inequalities we have
\begin{align*}
    x_b'(t) &= \frac{1}{\varepsilon} [ f(x_b(t-1+\varepsilon/2)) - f(x_b(t-1-\varepsilon/2)) ] \\
    &\geq \frac{1}{\varepsilon} [f(\alpha\varphi_0^\tau(t-1+\varepsilon/2)) - f(\alpha\varphi_0^\tau(t-1-\varepsilon/2))] \\
    &\geq \alpha\frac{f'(0)+\delta_0}{\varepsilon} [\varphi_0^\tau(t-1+\varepsilon/2) - \varphi_0^\tau(t-1-\varepsilon/2)] \\
    &= \alpha (f'(0)+\delta_0)\frac{\lambda_0}{f'(0)} (\varphi_0^\tau)'(t) \\
    &\geq \alpha\pi\lambda_0\left( 1+\frac{\delta_0}{f'(0)} \right) \cos\left( \frac{\pi\varepsilon}{2} \right) \\
    &>\alpha\pi.
\end{align*}
This estimate on the derivative implies that $x_b'(t) < \alpha(\varphi_0^\eta)'(t)$ for all $\eta\in \R$ and $t$ in this interval, thus the functions $x_b$ and $\alpha\varphi_0^\eta$ intersects at most once, and therefore we deduce \eqref{stepness}.

For \eqref{estimate2}, let $t\in[1-\tau+\varepsilon/2,2-\tau-\varepsilon]$. As in the proof of \eqref{estimate1}, and using \eqref{eq:inter-estimate}, we have
\begin{align*}
    x_b(t) &= \frac{1}{\varepsilon} \int_{t-1-\varepsilon/2}^{t-1+\varepsilon/2} f(x_b(s)) ds \\
    & \geq \frac{1}{\varepsilon} \int_{t-1-\varepsilon/2}^{t-1+\varepsilon/2} f(\alpha\varphi_0^\tau(s)) ds \\
    &\geq \frac{f'(0)+\delta_0}{\varepsilon} \int_{t-1-\varepsilon/2}^{t-1+\varepsilon/2} \alpha\varphi_0^\tau(s) ds \\
    &=\alpha\lambda_0\left( 1+\frac{\delta_0}{f'(0)} \right)\varphi_0^\tau(t).
\end{align*}

Subsequently, for \eqref{estimate3} consider $t\in(2-\tau-\varepsilon,z_1(b)+1-\varepsilon/2]$, and observe that
\begin{align*}
    x_b(t) &= \frac{1}{\varepsilon} \left( \int_{t-1-\varepsilon/2}^{1-\tau-\varepsilon/2} f(x_b(s)) ds + \int_{1-\tau-\varepsilon/2}^{t-1+\varepsilon/2} f(x_b(s)) ds \right) \\
    &\geq \frac{1}{\varepsilon} \left( \int_{t-1-\varepsilon/2}^{1-\tau-\varepsilon/2} f\left( \alpha\lambda_0\left( 1+\frac{\delta_0}{f'(0)}\right)\varphi_0^\tau(s) \right) ds + \int_{1-\tau-\varepsilon/2}^{t-1+\varepsilon/2} f(\alpha\varphi_0^{1-z_1(b)}(s)) ds \right) \\
    &\geq \alpha\frac{f'(0)+\delta_0}{\varepsilon} \left( \int_{t-1-\varepsilon/2}^{1-\tau-\varepsilon/2} \lambda_0\left( 1+\frac{\delta_0}{f'(0)}\right)\varphi_0^\tau(s) ds + \int_{1-\tau-\varepsilon/2}^{t-1+\varepsilon/2} \varphi_0^{1-z_1(b)}(s) ds \right).
\end{align*}

Now, if $z_1(b)\leq 1-\tau$, it is easy to see that $\varphi_0^{\tau} \leq \varphi_0^{1-z_1(b)}$ in the first interval of integration. On the other hand, if $z_1(b)>1-\tau$ then $\lambda_0(1+\delta_0/f'(0))\varphi_0^\tau \leq \varphi_0^{1-z_1(b)}$ in the aforementioned interval. In fact, due to the shape of the functions involved, it is straightforward to check they intersect at most once in $[1/2-\tau, 1-\tau]$. Then, since
\[
    \lambda_0\left( 1+\frac{\delta_0}{f'(0)} \right) \varphi_0^\tau(1-\tau) = 0 >\varphi_0^{1-z_1(b)}(1-\tau),
\]
to get the claim it is enough to ensure that
\[
    \lambda_0\left( 1+\frac{\delta_0}{f'(0)} \right)\varphi_0^{\tau}(1-\tau-\varepsilon/2)\leq \varphi_0^{1-z_1(b)}(1-\tau-\varepsilon/2).
\]
This last inequality is equivalent to
\[
    \lambda_0\left( 1+\frac{\delta_0}{f'(0)} \right) \geq \frac{\varphi_0(1-z_1(b)-\tau-\varepsilon/2)}{\varphi_0(-\varepsilon/2)},
\]
and since $z_1(b)\in(1-\tau,1-\tau+\varepsilon/2)$, the right-hand side is bounded above by $\frac{\varphi_0(\varepsilon)}{\varphi_0(\varepsilon/2)}< 2$ and this last inequality holds. Consequently
\begin{align*}
    x_b(t) &\geq \alpha\frac{f'(0)+\delta_0}{\varepsilon} \int_{t-1-\varepsilon/2}^{t-1+\varepsilon/2} \varphi_0^{1-z_1(b)}(s) ds \\
    & = \alpha\lambda_0\left( 1+\frac{\delta_0}{f'(0)} \right) \varphi_0^{1-z_1(b)}(t),
\end{align*}
and thus \eqref{estimate3} follows.

Finally, for \eqref{stepness2} we use an analogous steepness argument as for \eqref{stepness}. Let $t\in (z_1(b)+1-\varepsilon/2, z_1(b)+1+\varepsilon/2)$, then we know that
\[
    x_b'(t) = \frac{1}{\varepsilon} [ f(x_b(t-1+\varepsilon/2)) - f(x_b(t-1-\varepsilon/2)) ].
\]
We divide the proof in two cases to use the different information about $x_b$, especially as the estimates change at $1-\tau-\varepsilon/2$. If $t-1-\varepsilon/2>1-\tau-\varepsilon/2$ then \eqref{stepness} and \eqref{estimate2} yield
\begin{align*}
    x_b'(t)& \leq \frac{1}{\varepsilon} \left[ f\left( \alpha\lambda_0\left( 1+\frac{\delta_0}{f'(0)} \right) \varphi_0^\tau(t-1+\varepsilon/2) \right) - f(\alpha\varphi_0^{1-z_1(b)}(t-1-\varepsilon/2)) \right] \\
    & \leq \alpha\frac{f'(0)+\delta_0}{\varepsilon} \left[ \lambda_0\left( 1+\frac{\delta_0}{f'(0)} \right) \varphi_0^\tau(t-1+\varepsilon/2) - \varphi_0^{1-z_1(b)}(t-1-\varepsilon/2) \right],
\end{align*}
and in the same fashion as for the previous statement, we deduce that $\lambda_0(1+\delta_0/f'(0))\varphi_0^\tau \geq \varphi_0^{1-z_1(b)}$ at $t-1+\varepsilon/2$, so that
\begin{align*}
    x_b'(t) &\leq \alpha\frac{f'(0)}{\varepsilon}[\varphi_0^{1-z_1(b)}(t-1+\varepsilon/2) - \varphi_0^{1-z_1(b)}(t-1-\varepsilon/2)] <-\alpha\pi,
\end{align*}
and we conclude just as for \eqref{stepness}. The complementary case when $t-1-\varepsilon/2\leq 1-\tau-\varepsilon/2$ is analogous.
\end{proof}

\begin{prop}[Estimates for $\tau\in(\varepsilon/2,\varepsilon{]}$] \label{prop:estimates2}
Let $b\in\B^\alpha$ with $\tau\in(\varepsilon/2,\varepsilon]$. Then, the extension $x_b$ satisfies the following estimates:
\begin{align}
    x_b(t) &\leq \lambda_0 \left( 1+\frac{\delta_0}{f'(0)} \right) \max_{\hat{\tau}\in\{\tau,\varepsilon/2\}} \alpha \varphi_0^{\hat{\tau}} (t), \quad t\in[0,1-\tau-\varepsilon/2]; \label{estimate4} \\ 
    \frac{x_b(t)}{\alpha \varphi_0^{1-z_1(b)}(t)} &>1, \quad t\in(1-\tau-\varepsilon/2,1-\tau+\varepsilon/2) \setminus \{ z_1(b) \};\label{stepness3} \\ 
    x_b(t) &\geq \alpha\lambda_0 \left( 1+\frac{\delta_0}{f'(0)} \right) \varphi_0^\tau (t), \quad t\in[1-\tau+\varepsilon/2, 2-\tau-\varepsilon];\label{estimate5} \\
    x_b(t) &\geq \alpha\lambda_0 \left( 1+\frac{\delta_0}{f'(0)} \right)\varphi_0^{1-z_1(b)}(t), \quad t\in(2-\tau-\varepsilon,z_1(b)+1-\varepsilon/2];\label{estimate6} \\
    \frac{x_b(t)}{\alpha \varphi_0^{-z_2(b)}(t)} &>1, \quad t\in(z_1(b)+1-\varepsilon/2, z_1(b)+1+\varepsilon/2)\setminus\{ z_2(b) \}. \label{stepness4}
\end{align}
\end{prop}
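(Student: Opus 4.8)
The plan is to follow the proof of Proposition \ref{prop:estimates1} line by line, changing only what is forced by the more elaborate barrier $\gamma_\tau$ of Assumption \ref{as:Balpha} in the regime $\tau\in(\varepsilon/2,\varepsilon]$. As in that proof, the two workhorses will be: (i) that $f$ is decreasing on $[-a_0,a_0]$, satisfies $f(x)\le(f'(0)+\delta_0)x$ on $[0,\alpha_0]$, and has $f'(0)+\delta_0<0$; and (ii) the eigenvalue identity
\[
    \frac1\varepsilon\int_{t-1-\varepsilon/2}^{t-1+\varepsilon/2}\varphi_0^{\hat\tau}(s)\,\dd s=\frac{\lambda_0}{f'(0)}\,\varphi_0^{\hat\tau}(t),\qquad\hat\tau\in\R,
\]
which is merely $M\varphi_0^{\hat\tau}=\lambda_0\varphi_0^{\hat\tau}$ written out. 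Whenever a barrier is fed into the equation I would, exactly as in the remark preceding Proposition \ref{prop:estimates1}, pass from $b$ to the barrier via monotonicity of $f$ when $b$ is small and via \eqref{eventual_posi} when $b\ge a_0$.

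For \eqref{estimate4} I would fix $t\in[0,1-\tau-\varepsilon/2]$, note that the integration window $W:=[t-1-\varepsilon/2,t-1+\varepsilon/2]$ lies in $[-1-\varepsilon/2,-\tau]$ (where $\gamma_\tau\ge0$ and small), and first establish the key geometric fact: on \emph{every} window $W$ of length $\varepsilon$ inside $(-1-\varepsilon/2,-\tau)$ there is a single $\hat\tau_0\in\{\tau,\varepsilon/2\}$ with $\gamma_\tau\ge\alpha\varphi_0^{\hat\tau_0}$ throughout $W$. This is precisely what $\gamma_\tau$ was built to guarantee: $\varphi_0^\tau$ and $\varphi_0^{\varepsilon/2}$ cross exactly once in $(-1-\varepsilon/2,-\tau)$, at $\tau^*$; on $[\tau^*-\varepsilon/2,\tau^*+\varepsilon/2]$ one has $\gamma_\tau=\max_{\hat\tau\in\{\tau,\varepsilon/2\}}\alpha\varphi_0^{\hat\tau}$, which dominates both shifts, while on each transition zone the lower bound from the remark following Assumption \ref{as:Balpha} gives $\gamma_\tau\ge\min_{\hat\tau\in\{\tau,\varepsilon/2\}}\alpha\varphi_0^{\hat\tau}$, which equals the smaller of the two shifts there; running through the possible positions of $W$ relative to $\tau^*$ and to $[\tau^*-\varepsilon/2,\tau^*+\varepsilon/2]$ then produces the desired $\hat\tau_0$. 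With $b\ge\alpha\varphi_0^{\hat\tau_0}$ on $W$ in hand, the same chain as for \eqref{estimate1} (monotonicity of $f$, then $f(x)\le(f'(0)+\delta_0)x$, then integrate and apply the identity) gives $x_b(t)\le\alpha\lambda_0(1+\tfrac{\delta_0}{f'(0)})\varphi_0^{\hat\tau_0}(t)\le\alpha\lambda_0(1+\tfrac{\delta_0}{f'(0)})\max_{\hat\tau\in\{\tau,\varepsilon/2\}}\varphi_0^{\hat\tau}(t)$, since $\alpha\lambda_0(1+\delta_0/f'(0))>0$. As in \eqref{eq:inter-estimate}, this would upgrade to $x_b(t)\le\alpha\varphi_0^\tau(t)$ on $[0,1-\tau-\varepsilon/2]$, because there $\lambda_0(1+\delta_0/f'(0))\ge2$ dominates the ratio $\varphi_0^{\varepsilon/2}(t)/\varphi_0^\tau(t)$ when $\varepsilon$ is small.

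The remaining estimates I expect to go through with essentially no change. For \eqref{stepness3} and \eqref{stepness4} I would reproduce the steepness arguments of \eqref{stepness} and \eqref{stepness2}: on the relevant window of length $\varepsilon$ one bounds $x_b'>\alpha\pi$ (resp. $x_b'<-\alpha\pi$) using \eqref{estimate4}, \eqref{estimate5} and $\lambda_0(1+\delta_0/f'(0))\cos(\pi\varepsilon/2)>1$, so that $x_b$ meets any shift $\alpha\varphi_0^\eta$ at most once there. Then \eqref{estimate5} and \eqref{estimate6} follow exactly as \eqref{estimate2} and \eqref{estimate3}, starting from $x_b\le\alpha\varphi_0^\tau$ on $[0,1-\tau-\varepsilon/2]$ and from \eqref{estimate5}; these steps use only integration, the identity, the case split $z_1(b)\le1-\tau$ versus $z_1(b)>1-\tau$, and that $\lambda_0(1+\delta_0/f'(0))\varphi_0^\tau$ and $\varphi_0^{1-z_1(b)}$ cross at most once on $[1/2-\tau,1-\tau]$ --- none of which sees the shape of $\gamma_\tau$. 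The main obstacle is therefore entirely in the first paragraph above, i.e.\ the window-by-window comparison between $\gamma_\tau$ and the two shifts; it is also what forces $\max_{\hat\tau\in\{\tau,\varepsilon/2\}}$ in \eqref{estimate4} rather than the $\min_{\hat\tau\in[\tau,\varepsilon/2]}$ of \eqref{estimate1}, since now $\gamma_\tau$ dominates only one shift on a given window, so the resulting bound holds for one $\hat\tau_0$ and, to be stated uniformly in $t$, must be relaxed to the larger shift.
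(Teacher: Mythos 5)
Your proposal is correct and follows essentially the same route as the paper: the paper's proof of \eqref{estimate4} and \eqref{estimate5} splits $[0,1-\tau-\varepsilon/2]$ (resp.\ $[1-\tau+\varepsilon/2,2-\tau-\varepsilon]$) into three subintervals according to the position of the integration window relative to $\tau^*$, which is exactly your ``one shift $\hat\tau_0\in\{\tau,\varepsilon/2\}$ dominates each length-$\varepsilon$ window'' observation written out case by case (valid precisely because the max-region $[\tau^*-\varepsilon/2,\tau^*+\varepsilon/2]$ has length $\varepsilon$). The upgrade to $x_b\le\alpha\varphi_0^\tau$, the single-crossing comparison of $\lambda_0(1+\delta_0/f'(0))\varphi_0^{\varepsilon/2}$ with $\varphi_0^\tau$, and the reduction of \eqref{stepness3}--\eqref{stepness4} and \eqref{estimate6} to the arguments of Proposition \ref{prop:estimates1} all match the paper's proof.
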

\begin{proof}
To prove \eqref{estimate4} let $t\in[0,1+\tau^*-\varepsilon)$. Proceeding in a similar fashion as for \eqref{estimate1} we get
\begin{align*}
    x_b(t) &= \frac{1}{\varepsilon}\int_{t-1-\varepsilon/2}^{t-1+\varepsilon/2} f(b(s))\dd s\\
    &\leq \frac{1}{\varepsilon}\int_{t-1-\varepsilon/2}^{t-1+\varepsilon/2} f(\alpha\varphi_0^{\varepsilon/2}(s))\dd s \\
    &\leq \alpha \frac{f'(0)+\delta_0}{\varepsilon} \int_{t-1-\varepsilon/2}^{t-1+\varepsilon/2} \varphi_0^{\varepsilon/2}(s) ds \\
    &= \alpha \lambda_0\left( 1+\frac{\delta_0}{f'(0)} \right) \varphi_0^{\varepsilon/2}(t).
\end{align*}
Furthermore, if $t\in[1+\tau^*-\varepsilon,1+\tau^*]$ it follows
\begin{align*}
    x_b(t) &= \frac{1}{\varepsilon} \left( \int_{t-1-\varepsilon/2}^{\tau^*-\varepsilon/2} f(b(s))\dd s + \int_{\tau^*-\varepsilon/2}^{t-1+\varepsilon/2} f(b(s)) \dd s\right) \\
    &\leq \alpha\frac{f'(0)+\delta_0}{\varepsilon} \left( \int_{t-1-\varepsilon/2}^{\tau^*-\varepsilon/2} \varphi_0^{\varepsilon/2}(s)\dd s + \int_{\tau^*-\varepsilon/2}^{t-1+\varepsilon/2} \max_{\hat{\tau}\in\{ \tau,\varepsilon/2 \}} \varphi_0^{\hat{\tau}}(s)  ~ ds \right) \\
    &\leq \alpha\frac{f'(0)+\delta_0}{\varepsilon} \int_{t-1-\varepsilon/2}^{t-1+\varepsilon/2} \varphi_0^{\varepsilon/2}(s) ds \\
    &= \alpha \lambda_0\left( 1+\frac{\delta_0}{f'(0)} \right) \varphi_0^{\varepsilon/2}(t).
\end{align*}

Similarly, we can prove that for all $t\in[1+\tau^*,1-\tau-\varepsilon/2]$ we have $x_b(t)\leq \alpha\lambda_0(1+\delta_0/f'(0))\varphi_0^{\tau}(t)$ and the result follows.

Analogously to \eqref{eq:inter-estimate}, estimate \eqref{estimate4} implies that $x_b \leq \alpha\varphi_0^\tau$ on $[0,1-\tau-\varepsilon/2]$. Indeed, by the condition on $\delta_0$ is clear that $\lambda_0(1+\delta_0/f'(0)) \varphi_0^\tau \leq \alpha \varphi_0^\tau$ on $[0,1-\tau-\varepsilon/2]$; while, since $\lambda_0(1+\delta_0/f'(0)) \varphi_0^{\varepsilon/2}$ and $\alpha\varphi_0^\tau$ intersects at most once in $[-\varepsilon/2, 1/2 - \varepsilon/2]$, it is enough to check that
\[
    \lambda_0\left( 1+\frac{\delta_0}{f'(0)} \right) \varphi_0^{\varepsilon/2}(0) \leq \varphi_0^\tau(0).
\]
But this is equivalent to
\[
    \lambda_0\left( 1+\frac{\delta_0}{f'(0)} \right) \geq \frac{\varphi_0(1+\tau)}{\varphi_0(1+\varepsilon/2)}= \frac{\varphi_0(\tau)}{\varphi_0(\varepsilon/2)},
\]
and this holds since the right-hand side is bounded above by 2.

The proof of \eqref{stepness3} follows the same steps as the one for \eqref{stepness}.

Next, for \eqref{estimate5} let $t\in[1-\tau+\varepsilon/2,2+\tau^*-\varepsilon/2]$, then
\begin{align*}
    x_b(t) &= \frac{1}{\varepsilon}\left( \int_{t-1-\varepsilon/2}^{0} f(b(s))\dd s + \int_0^{t-1+\varepsilon/2} f(x_b(s))\right) \\
    &\geq \alpha\frac{f'(0)+\delta_0}{\varepsilon}\left( \int_{t-1-\varepsilon/2}^0 \varphi_0^{\tau}(s)\dd s + \int_0^{t-1+\varepsilon/2} \lambda_0\left( 1+\frac{\delta_0}{f'(0)} \right) \varphi_0^{\varepsilon/2}(s) \dd s \right) \\
    &\geq \alpha\frac{f'(0)+\delta_0}{\varepsilon} \int_{t-1-\varepsilon/2}^{t-1+\varepsilon/2} \varphi_0^\tau(s) ds \\
    &= \alpha\lambda_0\left( 1+\frac{\delta_0}{f'(0)} \right) \varphi_0^\tau(t).
\end{align*}
On the other hand, if $t\in[2+\tau^*-\varepsilon/2,2+\tau^*+\varepsilon/2]$ we obtain
\begin{align*}
    x_b(t) &= \frac{1}{\varepsilon}\left( \int_{t-1-\varepsilon/2}^{1+\tau^*} f(x_b(s))\dd s + \int_{1+\tau^*}^{t-1+\varepsilon/2} f(x_b(s))\dd s \right) \\
    &\geq \alpha \frac{f'(0)+\delta_0}{\varepsilon}\left( \int_{t-1-\varepsilon/2}^{1+\tau^*} \lambda_0 \left( 1+\frac{\delta_0}{f'(0)} \right) \varphi_0^{\varepsilon/2}(s)\dd s + \int_{1+\tau^*}^{t-1+\varepsilon/2} \lambda_0 \left( 1+\frac{\delta_0}{f'(0)} \right) \varphi_0^{\tau}(s)\dd s \right) \\
    &\geq \alpha\lambda_0 \left( 1+\frac{\delta_0}{f'(0)} \right) \varphi_0^{\tau}(t).
\end{align*}

In a similar fashion we can prove that this inequality holds on $[2+\tau^*+\varepsilon/2,2-\tau-\varepsilon]$ and we obtain the result

    The proofs of \eqref{estimate6} and \eqref{stepness4} follow from similar arguments as the ones in the proof \eqref{estimate3} and \eqref{stepness2}, and we omit them for concision. Proposition \ref{prop:estimates2} is proved.
\end{proof}

\subsection{Connection with a convex set}

The notion of the set $\B^\alpha$ comes from trying to identify a subsolution to the problem that allows us to construct an invariant set in which apply a fixed point argument. However, due to the flexibility allowed to the parameter $\tau$, this set fails to be convex. Thus, we are dealing with two different issues. It is necessary to take $\tau$ in this way taking into account the lack of symmetry in the problem, because this may raise solutions with an a priori unknown half-period. On the other hand, one of the usual conditions in the fixed point theorems is the convexity of the set we are working on, a property which $\B^\alpha$ does not possess.

One possibility to handle these problems is to use a topological degree argument, since this does not require to have a convex set, but instead consider an open set (see \cite{zeidler}). This additional requirement requires providing an stronger topology to the state space, as the $C^1$-topology. This will also carry several difficulties to define and prove the main properties of the Poincaré map.

One other way in which we can deal with this problem is to transform the set in one which turns out to be convex. Or, more generally, one which possesses the fixed point property. To do this, note that we can divide a function $b\in \B^\alpha$ in two parts, its positive part defined in $[-1-\varepsilon/2,-\tau]$, and its negative part defined in $[-\tau,0]$. Thus, by a simple change of variables we can regard these functions as defined on $[-1-\varepsilon/2,-\varepsilon/2]$ and $[-\varepsilon/2,0]$, respectively. That is, we transform the set $\B^\alpha$ into one which does not include explicitly the value of $\tau$, and therefore in a convex set, and this is the approach we are going to take.

To formalize the aforementioned ideas, as when we define $\B^\alpha$, let us establish the following assumptions.
\begin{assumption}\label{as:Ualpha}
Let $b_1\in C_0([-1-\varepsilon/2,-\varepsilon/2])$ and $b_2 \in C_0([-\varepsilon/2,0])$. The following conditions are satisfied:
\begin{enumerate}[label={\roman*)}]
    \item $b_1(-\varepsilon/2)=0$.
    \item There exists a unique $\tau\in(0,\varepsilon]$ such that the following estimates hold
\begin{align*}
    b_1(t) &\geq \gamma_\tau(h_\tau(t)), \quad \forall t\in[-1-\varepsilon/2,-\varepsilon/2], \\
    b_2(t) &\leq \alpha \varphi_0^\tau \left( \frac{2\tau}{\varepsilon}t \right), \quad \forall t\in[-\varepsilon/2,0],
\end{align*}
where $h_\tau \colon [-1-\varepsilon/2, -\varepsilon/2]\to [-1-\varepsilon/2,-\tau]$ is given by $h_\tau(t)=t-(\tau-\varepsilon/2)(t+1+\varepsilon/2)$.
\end{enumerate}
\end{assumption}

\begin{assumption}\label{as:Valpha}
Let $b_1\in C_0([-1-\varepsilon/2, -\varepsilon/2])$ and $b_2\in C_0([-\varepsilon/2,0])$. The following conditions are satisfied:
\begin{enumerate}[label={\roman*)}]
    \item $b_1\geq 0$ and $b_2\leq 0$.
    \item $b_1(-\varepsilon/2) =0$.
\end{enumerate}
\end{assumption}

Then, let $E=C_0([-1-\varepsilon/2,-\varepsilon/2])\times C_0([-\varepsilon/2,0])\times (0,\varepsilon]$, and we define the following sets for $\tau_0\in(0,\varepsilon/2)$ fixed
\begin{align}
    B^{\alpha,\tau_0} &= \{ b\in C_0([-1-\varepsilon/2,0]) \colon \text{Assumption \ref{as:Balpha} holds with}~\tau\geq \tau_0 \},\\
    U^{\alpha,\tau_0} &= \{ (b_1,b_2,\tau)\in E \colon \text{Assumption \ref{as:Ualpha} holds with}~\tau\geq \tau_0 \},\\
    V^{\alpha,\tau_0} &= \{ (b_1,b_2,\tau)\in E \colon \text{Assumption \ref{as:Valpha} holds and}~\tau\in [\tau_0, \varepsilon]\}.
\end{align}

Now, our aim is to prove that $B^{\alpha,\tau_0}$ is homeomorphic to $V^{\alpha,\tau_0}$, and to this end we define the maps $B^{\alpha,\tau_0} \xrightarrow{\xi_1} U^{\alpha,\tau_0} \xrightarrow{\xi_2} V^{\alpha,\tau_0}$ given by
\[
    \xi_1(u) = (v_1,v_2,\tau), \quad \text{and}\quad \xi_2(v_1,v_2,\tau) = (w_1,w_2,\tau),
\]
with
\begin{align*}
    v_1(t)&=u(h_\tau(t)), \quad t\in[-1-\varepsilon/2,-\varepsilon/2],\\
    v_2(t) &= u\left( \frac{2\tau}{\varepsilon} t \right), \quad t\in[-\varepsilon/2,0], \\
    w_1(t) &= v_1(t) -\gamma_\tau(h_\tau(t)), \quad t\in[-1-\varepsilon/2,-\varepsilon/2],\\
    w_2(t)&= v_2(t)- \alpha\varphi_0^{\tau}\left( \frac{2\tau}{\varepsilon}t \right), \quad t\in[-\varepsilon/2,0],
\end{align*}
where $h_\tau$ is as in Assumption \ref{as:Ualpha}.

\begin{prop}
    The maps $\xi_1$ and $\xi_2$ are well defined.
\end{prop}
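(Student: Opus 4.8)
The plan is to verify the two set-membership claims separately: that $\xi_1$ maps $B^{\alpha,\tau_0}$ into $U^{\alpha,\tau_0}$, and that $\xi_2$ maps $U^{\alpha,\tau_0}$ into $V^{\alpha,\tau_0}$; in each case one must check that the output lies in the correct product of Banach spaces (continuity, correct endpoints, vanishing at the left endpoint) and that it satisfies the appropriate assumption with the \emph{same} value of $\tau$, still bounded below by $\tau_0$.

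For $\xi_1$, I would start from $u\in B^{\alpha,\tau_0}$, so $u\in C_0([-1-\varepsilon/2,0])$ satisfies Assumption \ref{as:Balpha} with some unique $\tau\in[\tau_0,\varepsilon]$ and $u(-\tau)=0$. The map $h_\tau(t)=t-(\tau-\varepsilon/2)(t+1+\varepsilon/2)$ is affine, hence continuous, and a direct computation gives $h_\tau(-1-\varepsilon/2)=-1-\varepsilon/2$ and $h_\tau(-\varepsilon/2)=-\varepsilon/2-(\tau-\varepsilon/2)\cdot 1=-\tau$, so $h_\tau$ is an (increasing, since $1-(\tau-\varepsilon/2)>0$ for $\tau\le\varepsilon<1$) affine bijection from $[-1-\varepsilon/2,-\varepsilon/2]$ onto $[-1-\varepsilon/2,-\tau]$. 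Therefore $v_1=u\circ h_\tau\in C([-1-\varepsilon/2,-\varepsilon/2])$ and $v_1(-\varepsilon/2)=u(-\tau)=0$, so $v_1\in C_0([-1-\varepsilon/2,-\varepsilon/2])$; likewise $t\mapsto \tfrac{2\tau}{\varepsilon}t$ maps $[-\varepsilon/2,0]$ onto $[-\tau,0]$, giving $v_2=u(\tfrac{2\tau}{\varepsilon}\cdot)\in C_0([-\varepsilon/2,0])$ (it vanishes at $0$ because $u(0)$ — wait, $\varphi_0^\tau(0)=\sin(\pi(1+\tau))=-\sin(\pi\tau)<0$, but $C_0$ here is defined with the condition $u(a)=0$ at the \emph{left} endpoint, so what we need at $t=-\varepsilon/2$ is $v_2(-\varepsilon/2)=u(-\tau)=0$, which holds). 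Then the bound $u(s)\ge\gamma_\tau(s)$ for $s\in[-1-\varepsilon/2,-\tau]$ becomes, after the substitution $s=h_\tau(t)$, exactly $b_1(t)=v_1(t)\ge\gamma_\tau(h_\tau(t))$; and the bound $u(s)\le\alpha\varphi_0^\tau(s)$ for $s\in[-\tau,0]$ becomes $v_2(t)\le\alpha\varphi_0^\tau(\tfrac{2\tau}{\varepsilon}t)$. Uniqueness of $\tau$ transfers directly (if two values of $\tau$ worked for $(v_1,v_2)$ one could pull them back to two values working for $u$). Hence $\xi_1(u)\in U^{\alpha,\tau_0}$.

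For $\xi_2$, take $(v_1,v_2,\tau)\in U^{\alpha,\tau_0}$ and set $w_1(t)=v_1(t)-\gamma_\tau(h_\tau(t))$, $w_2(t)=v_2(t)-\alpha\varphi_0^\tau(\tfrac{2\tau}{\varepsilon}t)$. Continuity of $w_1,w_2$ follows from continuity of $v_1,v_2,\gamma_\tau,\varphi_0^\tau$ and of $h_\tau$; for $w_1$ I must also note that $\gamma_\tau$ is continuous on $[-1-\varepsilon/2,-\tau]$ — for $\tau\in(0,\varepsilon/2]$ this is a max of continuous functions, and for $\tau\in(\varepsilon/2,\varepsilon]$ the definition via the cutoff functions $\Phi_{a,b}$ was precisely engineered to be continuous (this is the content of the Remark after Assumption \ref{as:Balpha}). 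At the left endpoint, $\gamma_\tau(h_\tau(-1-\varepsilon/2))=\gamma_\tau(-1-\varepsilon/2)=\alpha\varphi_0^{\hat\tau}(-1-\varepsilon/2)=\alpha\sin(\pi(\hat\tau-\varepsilon/2))$ — hmm, this need not vanish, so one must instead check $w_1(-1-\varepsilon/2)=v_1(-1-\varepsilon/2)-\gamma_\tau(-1-\varepsilon/2)$, and since the $C_0$ condition on $b_1\in C_0([-1-\varepsilon/2,-\varepsilon/2])$ fixes the value at $-\varepsilon/2$, what is actually needed is $w_1(-\varepsilon/2)=0$: indeed $v_1(-\varepsilon/2)=0$ and $\gamma_\tau(h_\tau(-\varepsilon/2))=\gamma_\tau(-\tau)=\alpha\varphi_0^\tau(-\tau)=\alpha\sin(\pi\cdot 1)=0$, so $w_1(-\varepsilon/2)=0$ and $w_1\in C_0([-1-\varepsilon/2,-\varepsilon/2])$. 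The sign conditions of Assumption \ref{as:Valpha} are then immediate: $w_1=v_1-\gamma_\tau\circ h_\tau\ge 0$ is exactly the first estimate of Assumption \ref{as:Ualpha}, and $w_2=v_2-\alpha\varphi_0^\tau(\tfrac{2\tau}{\varepsilon}\cdot)\le 0$ is the second. Since $\tau$ is passed through unchanged and stays in $[\tau_0,\varepsilon]$, we get $\xi_2(v_1,v_2,\tau)\in V^{\alpha,\tau_0}$.

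The only genuinely delicate point — the rest being bookkeeping with affine changes of variable and endpoint evaluations — is the continuity and well-definedness of $\gamma_\tau$ in the regime $\tau\in(\varepsilon/2,\varepsilon]$, where its three-piece definition involving $\tau^*$ and the mollifiers $\Phi_{a,b}$ has to be checked to glue continuously at the breakpoints $-\tau^*\pm\varepsilon/2$; once that is granted (as noted, this is exactly why those cutoffs were introduced), everything else reduces to observing that $h_\tau$ and $t\mapsto\tfrac{2\tau}{\varepsilon}t$ are affine homeomorphisms carrying the relevant intervals onto each other and transporting the defining inequalities verbatim. I would therefore organize the proof as: (1) $h_\tau$ and the scaling are affine bijections of the stated intervals; (2) endpoint values give membership in the respective $C_0$-spaces; (3) the inequalities and uniqueness of $\tau$ transfer under these substitutions; (4) a brief remark that $\gamma_\tau$ is continuous, citing the construction in Assumption \ref{as:Balpha}.
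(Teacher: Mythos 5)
Your proposal is correct and follows essentially the same route as the paper's (very short) proof: transfer the two defining inequalities through the affine changes of variable $h_\tau$ and $t\mapsto\frac{2\tau}{\varepsilon}t$, and check the endpoint values $v_1(-\varepsilon/2)=v_2(-\varepsilon/2)=u(-\tau)=0$; the paper dismisses $\xi_2$ as straightforward, which you verify explicitly. One tiny labelling point: the identity $v_1(-\varepsilon/2)=0$ is item (i) of Assumption \ref{as:Ualpha} rather than the $C_0$ condition (which concerns the left endpoint, where $v_1(-1-\varepsilon/2)=u(-1-\varepsilon/2)=0$ holds anyway), but both facts are true and your argument is sound.
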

\begin{proof}
Let $u\in B^{\alpha,\tau_0}$ and let $\tau\in [\tau_0,\varepsilon]$ be the unique zero of $u$. Since $u(t)\geq \gamma_\tau(t)$ for $t\in[-1-\varepsilon,-\tau]$, and $u(t)\leq \alpha \varphi_0^{\tau}(t)$ for $t\in[-\tau,0]$, then
\begin{align*}
    v_1(t) &\geq \gamma_\tau(h_\tau(t)), \quad \forall t\in[-1-\varepsilon/2, -\varepsilon/2], \\
    v_2(t) & \leq \alpha\varphi_0^{\tau} \left(\frac{2\tau}{\varepsilon} t\right), \quad \forall t\in[-\varepsilon/2,0],
\end{align*}
where $\xi_1(u)=(v_1,v_2,\tau)$. Moreover, $v_1(-\varepsilon/2)=v_2(-\varepsilon/2)=u(-\tau)=0$ and consequently, $\xi_1(u)=(v_1,v_2,\tau)\in U^{\alpha,\tau_0}$.

It is straightforward to check that $\xi_2$ is well defined.
\end{proof}

\begin{prop}
    The maps $\xi_1$ and $\xi_2$ are continuous.
\end{prop}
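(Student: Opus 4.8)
The plan is to reduce the continuity of both maps to one nontrivial point --- the continuous dependence of the parameter $\tau$ on the function in $B^{\alpha,\tau_0}$ --- together with routine compactness and uniform continuity arguments; all three spaces carry the product of the sup-norm topologies, so it suffices to check sequential continuity. First I would argue that $u\mapsto\tau$ is continuous from $B^{\alpha,\tau_0}$ to $[\tau_0,\varepsilon]$. Let $u_n\to u$ in $B^{\alpha,\tau_0}$ and let $\tau_n,\tau\in[\tau_0,\varepsilon]$ be the associated parameters of Assumption \ref{as:Balpha}; since $[\tau_0,\varepsilon]$ is compact it is enough to show that any subsequential limit $\tau_{n_k}\to\tau'$ satisfies $\tau'=\tau$. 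Uniform convergence gives $0=u_{n_k}(-\tau_{n_k})\to u(-\tau')$, hence $u(-\tau')=0$. On the other hand, Assumption \ref{as:Balpha} forces any $v\in B^{\alpha,\tau_0}$ (with parameter $\tau_v$) to be strictly positive on $(-1-\varepsilon/2,-\tau_v)$ --- there it dominates the barrier $\gamma_{\tau_v}$, which is strictly positive on that open interval --- and strictly negative on $(-\tau_v,0]$ --- there it is dominated by $\alpha\varphi_0^{\tau_v}$, which is negative because $\varepsilon<1$; thus $-\tau_v$ is the only zero of $v$ in $(-1-\varepsilon/2,0)$. Since $-\tau'\in[-\varepsilon,-\tau_0]\subset(-1-\varepsilon/2,0)$ is a zero of $u$, we conclude $\tau'=\tau$, so $\tau_n\to\tau$.

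For $\xi_1$, write $\xi_1(u)=(v_1,v_2,\tau)$ with $v_1=u\circ h_\tau$ and $v_2=u\bigl(\tfrac{2\tau}{\varepsilon}\,\cdot\bigr)$. If $u_n\to u$ then $\tau_n\to\tau$ by the previous paragraph, and since $h_\tau$ is affine in $t$ with coefficients continuous in $\tau$ one has $\|h_{\tau_n}-h_\tau\|_\infty=|\tau_n-\tau|\to0$ on $[-1-\varepsilon/2,-\varepsilon/2]$ and, likewise, $\tfrac{2\tau_n}{\varepsilon}\,\cdot\to\tfrac{2\tau}{\varepsilon}\,\cdot$ uniformly on $[-\varepsilon/2,0]$. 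Writing $\omega_u$ for the modulus of continuity of $u$ on $[-1-\varepsilon/2,0]$,
\[
    \bigl\|u_n\circ h_{\tau_n}-u\circ h_\tau\bigr\|_\infty\le\|u_n-u\|_\infty+\omega_u\bigl(\|h_{\tau_n}-h_\tau\|_\infty\bigr)\xrightarrow[n\to\infty]{}0,
\]
and the same estimate handles the second component; the third component is the identity on $[\tau_0,\varepsilon]$. Hence $\xi_1(u_n)\to\xi_1(u)$ in $E$.

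For $\xi_2$, write $\xi_2(v_1,v_2,\tau)=(w_1,w_2,\tau)$ with $w_1(t)=v_1(t)-\gamma_\tau(h_\tau(t))$ and $w_2(t)=v_2(t)-\alpha\varphi_0^\tau\bigl(\tfrac{2\tau}{\varepsilon}t\bigr)$. Given $(v_1^n,v_2^n,\tau_n)\to(v_1,v_2,\tau)$ in $E$, the $v_i^n$ converge uniformly, so it suffices to control the subtracted terms. The map $(\tau,t)\mapsto\gamma_\tau(h_\tau(t))$ is continuous on the compact box $[\tau_0,\varepsilon]\times[-1-\varepsilon/2,-\varepsilon/2]$: $(\tau,t)\mapsto h_\tau(t)$ is jointly continuous and takes values in the interval where $\gamma_\tau$ lives, while $(\tau,s)\mapsto\gamma_\tau(s)$ is jointly continuous because on each of the two regimes $\tau\le\varepsilon/2$ and $\tau>\varepsilon/2$ it is built from the parametrized functions $\varphi_0^{\hat\tau}$ and the fixed smooth cut-offs $\Phi_{a,b}$ whose breakpoints move continuously with $\tau$ through $\tau^*=\tau^*(\tau)$, the pieces being glued so as to agree at the common breakpoints (this is exactly the correction alluded to in the remark following Assumption \ref{as:Balpha}), and the two regimes agree at $\tau=\varepsilon/2$, where both reduce to $\alpha\varphi_0^{\varepsilon/2}$. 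Being continuous on a compact set it is uniformly continuous, hence $\gamma_{\tau_n}(h_{\tau_n}(\cdot))\to\gamma_\tau(h_\tau(\cdot))$ uniformly; and $(\tau,t)\mapsto\varphi_0^\tau\bigl(\tfrac{2\tau}{\varepsilon}t\bigr)=\sin\bigl(\pi\bigl(\tfrac{2\tau}{\varepsilon}t+1+\tau\bigr)\bigr)$ is continuous on $[\tau_0,\varepsilon]\times[-\varepsilon/2,0]$, hence converges uniformly as well. Therefore $w_i^n\to w_i$ uniformly and $\xi_2$ is continuous.

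The only step needing genuine care is the continuity of $u\mapsto\tau$, which rests on the barriers in Assumption \ref{as:Balpha} being \emph{strict} sign constraints away from the prescribed zero. If one prefers not to use strictness, one may instead observe that the barrier inequalities are closed and, combined with the continuity of $\tau\mapsto\gamma_\tau$ and $\tau\mapsto\varphi_0^\tau$ established above, survive uniform limits, so that $\tau'$ also verifies conditions i)--ii) of Assumption \ref{as:Balpha} for $u$; the uniqueness clause of that assumption then gives $\tau'=\tau$. Everything else is a standard compactness and uniform continuity argument.
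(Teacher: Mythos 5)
Your proof is correct and follows essentially the same route as the paper's: the crux in both is the continuity of $u\mapsto\tau$, which you obtain via compactness of $[\tau_0,\varepsilon]$ plus the observation that $-\tau$ is the unique zero of $u$ in $(-1-\varepsilon/2,0]$, whereas the paper reaches the same conclusion by passing the barrier inequalities to the limit along subsequences and contradicting the sign of $u$ on the intermediate interval --- your ``closed inequalities'' variant at the end is exactly that argument. The remaining steps (triangle inequality with the modulus of continuity of $u$ for $\xi_1$, joint continuity of $(\tau,t)\mapsto\gamma_\tau(h_\tau(t))$ and of $(\tau,t)\mapsto\varphi_0^\tau(\tfrac{2\tau}{\varepsilon}t)$ for $\xi_2$) coincide with the paper's, your treatment of $\xi_2$ merely spelling out what the paper asserts in one line.
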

\begin{proof}
Let $u\in B^{\alpha,\tau_0}$ and $(u_n)_n\subseteq B^{\alpha,\tau_0}$ such that $u_n \to u$. In addition, let $\xi_1(u_n)=(v_{1,n},v_{2,n},\tau_n)$ and $\xi_1(u)=(v_1,v_2,\tau)$.

First, let us prove $\tau_n \to \tau$. To do this, we first check that $(\tau_n)_n$ indeed converges. Let $(\tau_{m_k})_k$ and $(\tau_{n_k})_k$ two convergent subsequences and assume they converge to two different limits, say $\tau_0$ and $\tau_1$. Without loss of generality, we assume that $\tau_0<\tau_1$ and therefore, for a fixed $\delta>0$ and taking $k$ sufficiently large such that $-\tau_{m_k}>-\tau_0 - \delta$, then
\[
    u_{m_k}(t) \geq \gamma_{\tau_{m_k}}(t), \quad \forall t\in[-1-\varepsilon/2, -\tau_0-\delta].
\]
Noting that $\tau_{m_k}\to \tau_0$ implies $\gamma_{\tau_{m_k}}\to \gamma_{\tau_0}$ uniformly in this set, then passing to the limit as $k\to \infty$ yields
\[
    u(t) \geq \gamma_{\tau_0}(t), \quad \forall t\in[-1-\varepsilon/2,-\tau_0-\delta],
\]
and since $\delta>0$ is arbitrary it follows
\begin{equation}\label{eq:21}
    u(t) \geq \gamma_{\tau_0}(t), \quad \forall t\in[-1-\varepsilon/2,-\tau_0].
\end{equation}
In a complementary fashion, making the same manipulations with the other subsequence, we obtain
\begin{equation}
    u(t) \leq \alpha \varphi_0^{\tau_1}(t) \quad \forall t\in[-\tau_1,0],
\end{equation}
but this contradicts \eqref{eq:21} since it states that $u$ is positive on $(-\tau_1,-\tau_0)$, while the second asserts $u$ is negative in the same interval. Thus, we deduce that $\tau_0=\tau_1$, and then the sequence $(\tau_n)_n$ is convergent.

Now, we assume that $\tau_n\to \tilde{\tau}$, and $\tilde{\tau}\neq \tau$; without loss of generality we take $\tilde{\tau}<\tau$. Then, as before we have
\[
    u(t) \geq \gamma_{\tilde{\tau}}(t) \quad \forall t\in[-1-\varepsilon/2,-\tilde{\tau}],
\]
owing the definition $\tau$ it follows
\[
    u(t)\leq \alpha \varphi_0^\tau (t) \quad \forall t\in[-\tau,0],
\]
and this is also a contradiction. Consequently, $\tilde{\tau}=\tau$.

Next, observe that
\begin{align*}
    |v_1(t) - v_{1,n}(t)| &= |u(h_\tau(t)) - u_n(h_{\tau_n}(t))| \\
    &\leq |u(h_\tau(t)) - u(h_{\tau_n}(t))| + |u(h_{\tau_n}(t)) - u_n(h_{\tau_n}(t))|
\end{align*}
and clearly the right-hand side converges uniformly to 0 since $\tau_n\to \tau$ implies the uniform convergence of $h_{\tau_n}\to h_\tau$, and also thanks to the uniform convergence $u_n\to u$. In the same fashion, we can prove that $v_{2,n}\to v_2$ uniformly, and for that reason $\xi_1$ is a continuous function.

The continuity of $\xi_2$ is straightforward since the maps $\tau\mapsto \gamma_\tau(h_\tau)$ and $\tau\mapsto \varphi_0^\tau\left( \frac{2\tau}{\varepsilon} \cdot \right)$ are continuous.
\end{proof}

It is easy to see that the functions $\xi_1$ and $\xi_2$ are one-to-one, and therefore invertible on the sets $\xi_1(B^{\alpha,\tau_0})$ and $\xi_2(U^{\alpha,\tau_0})$, which actually turn out to be $U^{\alpha,\tau_0}$ and $V^{\alpha,\tau_0}$, respectively. Moreover, the inverses are given by
\begin{align*}
    \xi_1^{-1}(v_1,v_2,\tau) = u, \quad\text{and}\quad \xi_2^{-1}(w_1,w_2,\tau) = (v_1,v_2,\tau),
\end{align*}
with
\begin{align*}
    u(t)&= \begin{cases}
        v_1(h_\tau^{-1}(t)), & t\in[-1-\varepsilon/2,-\tau], \\
        v_2\left( \frac{\varepsilon}{2\tau} t \right), & t\in[-\tau,0],
    \end{cases}\\
    v_1(t)&= w_1(t) + \gamma_\tau(h_\tau(t)), \quad t\in[-1-\varepsilon/2,-\varepsilon/2], \\
    v_2(t) &= w_2(t) + \alpha\varphi_0^{\tau} \left( \frac{2\tau}{\varepsilon} t \right), \quad t\in[-\varepsilon/2,0],
\end{align*}
where $h_\tau^{-1} \colon [-1-\varepsilon/2, -\tau] \to [-1-\varepsilon/2,-\varepsilon/2]$ is given by $h_\tau^{-1}(t) = \frac{t + (\tau-\varepsilon/2)(1+\varepsilon/2)}{1-\tau + \varepsilon/2}$, i.e., $h_\tau^{-1}$ is the inverse function of $h_\tau$.

\begin{prop}
    The maps $\xi_1^{-1}$ and $\xi_2^{-1}$ are continuous.
\end{prop}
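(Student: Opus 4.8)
The plan is to show directly that each inverse map is sequentially continuous, which suffices since all the spaces involved are metric. The key point is that both $\xi_1^{-1}$ and $\xi_2^{-1}$ depend on the triple $(w_1,w_2,\tau)$ (respectively $(v_1,v_2,\tau)$) in a way that is jointly continuous in all three arguments, and in particular the $\tau$-component is read off directly and thus trivially converges.

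First I would treat $\xi_2^{-1}$, which is the easy one. Suppose $(w_{1,n},w_{2,n},\tau_n)\to (w_1,w_2,\tau)$ in $E$; in particular $\tau_n\to\tau$ in $(0,\varepsilon]$ and $\tau_n\geq\tau_0$ so $\tau\geq\tau_0$. Writing $\xi_2^{-1}(w_{1,n},w_{2,n},\tau_n)=(v_{1,n},v_{2,n},\tau_n)$, we have $v_{1,n}(t)=w_{1,n}(t)+\gamma_{\tau_n}(h_{\tau_n}(t))$ and $v_{2,n}(t)=w_{2,n}(t)+\alpha\varphi_0^{\tau_n}\big(\tfrac{2\tau_n}{\varepsilon}t\big)$. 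Since $w_{1,n}\to w_1$ and $w_{2,n}\to w_2$ uniformly, it only remains to observe that the maps $\tau\mapsto \big(t\mapsto\gamma_\tau(h_\tau(t))\big)$ and $\tau\mapsto \big(t\mapsto \alpha\varphi_0^\tau(\tfrac{2\tau}{\varepsilon}t)\big)$ are continuous from $[\tau_0,\varepsilon]$ into the respective $C_0$-spaces — this is precisely the fact already noted at the end of the proof of continuity of $\xi_2$, and it follows from the explicit formulas for $h_\tau$, $\varphi_0^\tau$ and $\gamma_\tau$ (the latter being built from finitely many shifts $\varphi_0^{\hat\tau}$ and the fixed smooth cutoffs $\Phi_{a,b}$, all depending continuously on $\tau$). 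Hence $v_{1,n}\to v_1$ and $v_{2,n}\to v_2$ uniformly, proving continuity of $\xi_2^{-1}$.

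Next I would handle $\xi_1^{-1}$. Let $(v_{1,n},v_{2,n},\tau_n)\to(v_1,v_2,\tau)$ in $V^{\alpha,\tau_0}$, so again $\tau_n\to\tau$ with $\tau,\tau_n\in[\tau_0,\varepsilon]$, and write $\xi_1^{-1}(v_{1,n},v_{2,n},\tau_n)=u_n$, $\xi_1^{-1}(v_1,v_2,\tau)=u$. For $t\in[-1-\varepsilon/2,-\tau_n]$ we have $u_n(t)=v_{1,n}(h_{\tau_n}^{-1}(t))$, and for $t\in[-\tau_n,0]$, $u_n(t)=v_{2,n}(\tfrac{\varepsilon}{2\tau_n}t)$; similarly for $u$. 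The estimate to run is, for $t$ in the common domain $[-1-\varepsilon/2,0]$,
\[
    |u_n(t)-u(t)| \leq |u_n(t)-u_m^{\sharp}(t)| + |u_m^{\sharp}(t)-u(t)|,
\]
where $u_m^{\sharp}$ denotes the function obtained from $v_{1,n},v_{2,n}$ but with the \emph{old} parameter $\tau$ in the rescalings $h_\tau^{-1}$ and $\tfrac{\varepsilon}{2\tau}(\cdot)$; more concretely I would bound $|v_{1,n}(h_{\tau_n}^{-1}(t))-v_1(h_\tau^{-1}(t))|$ by $|v_{1,n}(h_{\tau_n}^{-1}(t))-v_{1,n}(h_\tau^{-1}(t))|+|v_{1,n}(h_\tau^{-1}(t))-v_1(h_\tau^{-1}(t))|$ and likewise for the $v_2$-piece. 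The second term in each pair tends to $0$ uniformly by $v_{i,n}\to v_i$. For the first term one uses that $h_{\tau_n}^{-1}\to h_\tau^{-1}$ and $\tfrac{\varepsilon}{2\tau_n}(\cdot)\to\tfrac{\varepsilon}{2\tau}(\cdot)$ uniformly on the relevant intervals (clear from the explicit formula for $h_\tau^{-1}$, whose denominator $1-\tau+\varepsilon/2$ is bounded away from zero on $[\tau_0,\varepsilon]$), together with the equicontinuity of the family $(v_{1,n})_n$: since the $v_{i,n}$ are uniformly convergent they are, in particular, uniformly equicontinuous, so small perturbations of the argument produce small perturbations of the value, uniformly in $n$. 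One should also note that the "seam" at $t=-\tau_n$ versus $t=-\tau$ causes no problem because both pieces glue continuously (all functions vanish there) and the overlap region $[-\max(\tau,\tau_n),-\min(\tau,\tau_n)]$ shrinks to a point.

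The main obstacle I anticipate is purely bookkeeping rather than conceptual: handling the moving change-of-variables point $-\tau_n$ cleanly, i.e. making sure the two-piece definition of $u_n$ is compared with the two-piece definition of $u$ on a common domain without sign or domain mismatches, and verifying that the equicontinuity of $(v_{i,n})_n$ really is available (it is, because a uniformly convergent sequence in $C_0$ of an interval is uniformly equicontinuous by a standard $3\varepsilon$ argument, the limit being uniformly continuous on a compact interval). Once these two points are dispatched, both continuity statements follow, and combined with the earlier propositions this shows $\xi_2\circ\xi_1\colon B^{\alpha,\tau_0}\to V^{\alpha,\tau_0}$ is a homeomorphism.
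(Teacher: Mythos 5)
Your proposal is correct and follows essentially the same route as the paper: the same triangle-inequality splitting on the three subintervals (the two matching pieces plus the shrinking seam around $-\tau$), with the only cosmetic difference that you transfer the argument-shift onto $v_{i,n}$ and invoke uniform equicontinuity of the convergent sequence, whereas the paper puts it on the fixed limit $v_i$ and uses its uniform continuity. The treatment of $\xi_2^{-1}$ via continuity of $\tau\mapsto\gamma_\tau(h_\tau(\cdot))$ and $\tau\mapsto\alpha\varphi_0^\tau(\tfrac{2\tau}{\varepsilon}\cdot)$ is likewise the paper's argument.
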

\begin{proof}
Let $(v_1,v_2,\tau)\in U^{\alpha,\tau_0}$ and $(v_{1,n},v_{2,n},\tau_n)_n \subseteq U^{\alpha,\tau_0}$ which converges to $(v_1,v_2,\tau)$. Also, let $u_n=\xi_n^{-1}(v_{1,n},v_{2,n},\tau_n)$ and $u=\xi_1^{-1}(v_1,v_2,\tau)$. We consider $I= \{ n\in\N \colon \tau\geq \tau_n \}$, and the following analysis will be made for the elements of the sequence with index in $I$; for the elements with index in $\N\setminus I$ it is done in a similar fashion.

Note that, if $t\in[-1-\varepsilon/2,-\tau]$, we have
\begin{align*}
    |u_n(t) - u(t)| &= |v_{1,n}(h_{\tau_n}^{-1}(t)) - v_1(h_\tau^{-1}(t))| \\
    &\leq \|v_{1,n}-v_1\| + | v_1(h_{\tau_n}^{-1}(t)) - v_1(h_\tau^{-1}(t)) |;
\end{align*}
on the other hand, if $t\in[-\tau_n,0]$ then
\begin{align*}
    |u_n(t) - u(t)| &= \left| v_{2,n}\left( \frac{\varepsilon}{2\tau_n} t \right) - v_2\left( \frac{\varepsilon}{2\tau} t\right) \right| \\
    &\leq \| v_{2,n} - v_2 \| + \left| v_2\left( \frac{\varepsilon}{2\tau_n}t \right) - v_2\left( \frac{\varepsilon}{2\tau} t \right) \right|,
\end{align*}
and from these estimates it is straightforward to check that taking the maximum over the interval under consideration and subsequently the limit as $n\to \infty$ ($n\in I$) goes to 0. Furthermore, if $t\in[-\tau,-\tau_n]$ it holds
\begin{align*}
    |u_n(t)- u(t)| & = \left| v_{1,n}(h_{\tau_n}^{-1}(t)) - v_2\left( \frac{\varepsilon}{2\tau} t \right) \right|,
\end{align*}
and since all the functions involved are continuous there exists $t_n\in[-\tau,-\tau_n]$ such that
\[
    \max_{t\in[-\tau,-\tau_n]} |u_n(t) - u(t)| = \left|v_1(h_{\tau_n}^{-1}(t_n)) - v_2\left( \frac{\varepsilon}{2\tau} t_n \right) \right|,
\]
from where
\[
    \limsup_{\substack{n\to +\infty \\ n\in I}}\max_{t\in[-\tau,-\tau_n]} |u_n(t) - u(t)| = \left| v_1(h_{\tau}^{-1}(-\tau) - v_2(\varepsilon/2)) \right| = 0,
\]
owing to the assumptions about $v_1$ and $v_2$.

Thus, observing that
\begin{align*}
    \max_{t\in[-1-\varepsilon/2,0]} |u_n(t)-u(t)| = \max & \left\{ \max_{t\in[-1-\varepsilon/2,-\tau]} |u_n(t)-u(t)| , \max_{t\in[-\tau,-\tau_n]} |u_n(t)-u(t)|, \right. \\
    & \left. \max_{t\in[-\tau_n,-\tau]} |u_n(t)-u(t)| \right\},
\end{align*}
we can conclude that
\[
    \lim_{\substack{n\to +\infty \\ n\in I}}\max_{t\in[-1-\varepsilon/2,0]} |u_n(t) - u(t)| = 0.
\]
This shows that $u_n \to u$ uniformly and therefore $\xi_1^{-1}$ is continuous.

The proof of the continuity for $\xi_2^{-1}$ is the same as for $\xi_2$.
\end{proof}

Therefore, we have proved that $B^{\alpha,\tau_0}$ is homeomorphic to $V^{\alpha,\tau_0}$.

\subsection{Invariance of the state space}

For a given $R>0$, let
\begin{align*}
    B^{\alpha,\tau_0}_R &= \{ b\in B^{\alpha,\tau_0} \colon \| b \|\leq R \}, \\
    U^{\alpha,\tau_0}_R &= \{ (b_1,b_2,\tau)\in U^{\alpha,\tau_0} \colon \|b_1\|, \|b_2\|\leq R \}, \\
    V^{\alpha,\tau_0}_R &= \{ (b_1,b_2,\tau)\in V^{\alpha,\tau_0} \colon \|b_1\|,\|b_2\| \leq R \}.
\end{align*}

The main motivation to define the sets $B^{\alpha,\tau_0}_R$ is given in the next lemma

\begin{lem}\label{lem:invariance_B}
    Let $R\geq \max_{|x|\leq A_0} |f(x)|$. Then, the set $B^{\alpha,\tau_0}_R$ is forward invariant under the operator $\Fcal$, defined in \eqref{eq:poincare_operator},  for every $\tau_0$ such that
    \begin{equation}\label{eq:tau0}
        \tau_0 \leq \frac{\alpha\varepsilon \lambda_0 \varphi_0(\varepsilon/2)}{2R} \left( 1+\frac{\delta_0}{f'(0)} \right),
    \end{equation}
    where $\varepsilon,\delta_0$ and $\alpha$ are like at the beginning of this section.
\end{lem}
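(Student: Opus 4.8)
The plan is to show that for every $b\in B^{\alpha,\tau_0}_R$ the image $\Fcal(b)$ again satisfies Assumption~\ref{as:Balpha} with some parameter $\tau'\in[\tau_0,\varepsilon]$ and has sup-norm $\le R$. Fix such a $b$ and let $\tau\in[\tau_0,\varepsilon]$ be its parameter. By Proposition~\ref{unif_bound}, $\|\Fcal(b)\|\le R$, and $\Fcal(b)(-1-\varepsilon/2)=x_b(z_1(b))=0$, so $\Fcal(b)\in C_0([-1-\varepsilon/2,0])$. Applying Proposition~\ref{prop:estimates1} when $\tau\le\varepsilon/2$ and Proposition~\ref{prop:estimates2} when $\tau>\varepsilon/2$ gives all the control on $x_b$ on $[z_1(b),z_1(b)+1+\varepsilon/2]$ that we will use. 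Set
\[
    \tau':=z_1(b)+1+\tfrac{\varepsilon}{2}-z_2(b),
\]
which by \eqref{stepness2} (or \eqref{stepness4}) lies in $(0,\varepsilon)$ since $z_2(b)\in(z_1(b)+1-\varepsilon/2,z_1(b)+1+\varepsilon/2)$. The change of variables $s=z_1(b)+1+\varepsilon/2+t$ turns $\Fcal(b)$ on $[-1-\varepsilon/2,0]$ into $x_b$ on $[z_1(b),z_1(b)+1+\varepsilon/2]$, and one checks that under it $\varphi_0^{\tau'}(t)$ becomes $\varphi_0^{-z_2(b)}(s)$ and $\varphi_0^{\varepsilon/2}(t)$ becomes $\varphi_0^{1-z_1(b)}(s)$, i.e.\ exactly the shifted eigenfunctions appearing in Propositions~\ref{prop:estimates1}--\ref{prop:estimates2}.

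Next I would verify Assumption~\ref{as:Balpha} for $(\Fcal(b),\tau')$. The zero condition $\Fcal(b)(-\tau')=x_b(z_2(b))=0$ is immediate, and $\Fcal(b)$ is positive on $(-1-\varepsilon/2,-\tau')$ and negative on $(-\tau',0]$ because $x_b>0$ on $(z_1(b),z_2(b))$ and $x_b<0$ on $(z_2(b),z_3(b))$ with $z_3(b)>z_1(b)+1+\varepsilon/2$; this also yields uniqueness of $\tau'$. The upper bound $\Fcal(b)(t)\le\alpha\varphi_0^{\tau'}(t)$ on $[-\tau',0]$ is \eqref{stepness2}/\eqref{stepness4} read in the $s$ variable: on $(z_2(b),z_1(b)+1+\varepsilon/2)$ both $x_b$ and $\varphi_0^{-z_2(b)}$ are negative, so the ratio being $>1$ gives $x_b<\alpha\varphi_0^{-z_2(b)}$, and the steepness estimate (which forces $x_b-\alpha\varphi_0^{-z_2(b)}$ to vanish only at $z_2(b)$) extends this to the closed interval. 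For the lower bound $\Fcal(b)(t)\ge\gamma_{\tau'}(t)$ on $[-1-\varepsilon/2,-\tau']$ I would patch \eqref{stepness}/\eqref{stepness3} ($x_b>\alpha\varphi_0^{1-z_1(b)}$ near $z_1(b)$), \eqref{stepness2}/\eqref{stepness4} ($x_b>\alpha\varphi_0^{-z_2(b)}$ near $z_2(b)$), and \eqref{estimate2}/\eqref{estimate5}, \eqref{estimate3}/\eqref{estimate6} in between; since $\lambda_0(1+\delta_0/f'(0))\ge2$, the middle estimates dominate any single shifted eigenfunction on the range where all the relevant shifts are near their maximum, and since (by steepness) $x_b$ crosses any $\alpha\varphi_0^{\eta}$ at most once while $\gamma_{\tau'}$ never exceeds $\max_{\hat\tau\in\{\tau',\varepsilon/2\}}\alpha\varphi_0^{\hat\tau}$ on the interpolation branches of its definition, a finite case distinction (on the position of $s$ and on whether $\tau'\le\varepsilon/2$) closes this step.

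It remains to prove $\tau'\ge\tau_0$, which is where \eqref{eq:tau0} is used. Put $\theta:=\alpha\lambda_0\varphi_0(\varepsilon/2)(1+\delta_0/f'(0))$. As in the proof of compactness of $\Fcal$ one has $|x_b'|\le 2R/\varepsilon$, so from $x_b(z_2(b))=0$,
\[
    \tau'\ =\ z_1(b)+1+\tfrac{\varepsilon}{2}-z_2(b)\ \ge\ \frac{\varepsilon}{2R}\,\bigl|x_b\bigl(z_1(b)+1+\tfrac{\varepsilon}{2}\bigr)\bigr|,
\]
and hence, via \eqref{eq:tau0}, it suffices to show $x_b(z_1(b)+1+\varepsilon/2)\le-\theta$. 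Since $x_b>0$ on $[z_1(b),z_1(b)+\varepsilon]$, we have $x_b(z_1(b)+1+\varepsilon/2)=-\frac1\varepsilon\int_{z_1(b)}^{z_1(b)+\varepsilon}|f(x_b(s))|\,ds$; bounding $x_b$ from below on this interval by $\alpha\varphi_0^{1-z_1(b)}$ via \eqref{stepness}/\eqref{stepness3} where $s<1-\tau+\varepsilon/2$ and by $2\alpha\varphi_0^{\tau}$ via \eqref{estimate2}/\eqref{estimate5} (using $\lambda_0(1+\delta_0/f'(0))\ge2$) on the rest, treating $x_b(s)>a_0$ through \eqref{eventual_posi}, and then invoking $|f(x)|\ge(|f'(0)|-\delta_0)|x|$ for $|x|\le\alpha_0$ together with the value of $\lambda_0$ from \eqref{eq:princ-eigval}, the claim reduces to the elementary estimate
\[
    \cos(\pi\varepsilon)-\cos(\pi\rho)+2\cos(\tfrac{\pi\varepsilon}{2})-2\cos\bigl(\pi(\tfrac{3\varepsilon}{2}-\rho)\bigr)\ \ge\ 0,\qquad \rho:=1-\tau+\tfrac{\varepsilon}{2}-z_1(b)\in(0,\varepsilon),
\]
which holds since the left-hand side vanishes at $\rho=\varepsilon$ and is non-increasing on $[0,\varepsilon]$. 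Consequently $\tau'\ge\varepsilon\theta/(2R)\ge\tau_0$, and therefore $\Fcal(b)\in B^{\alpha,\tau_0}_R$.

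I expect the main obstacle to be the bookkeeping of the second step: reconciling the piecewise lower barrier produced by Propositions~\ref{prop:estimates1}--\ref{prop:estimates2} with the rather intricate definition of $\gamma_{\tau'}$, in particular the $\Phi$-interpolated branches that are active when $\tau'>\varepsilon/2$. A secondary point requiring care is making the inequality $x_b(z_1(b)+1+\varepsilon/2)\le-\theta$ rigorous uniformly in the admissible position of $z_1(b)$, i.e.\ uniformly in $\rho\in(0,\varepsilon)$.
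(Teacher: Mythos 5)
Your proposal follows the paper's proof essentially step for step: the same choice $\tau'=z_1(b)+1+\varepsilon/2-z_2(b)$, the same reduction of the two barrier inequalities to the a priori estimates of Propositions \ref{prop:estimates1}--\ref{prop:estimates2} combined with at-most-one-crossing arguments checked at the boundary points of the subintervals, and the same mechanism $\tau'\ge\frac{\varepsilon}{2R}\,\bigl|x_b\bigl(z_1(b)+1+\frac{\varepsilon}{2}\bigr)\bigr|$ together with $x_b\bigl(z_1(b)+1+\frac{\varepsilon}{2}\bigr)\le-\alpha\lambda_0\bigl(1+\frac{\delta_0}{f'(0)}\bigr)\varphi_0(\varepsilon/2)$ to invoke \eqref{eq:tau0}. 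The only (valid) cosmetic difference is in that last bound: the paper compares $\lambda_0(1+\delta_0/f'(0))\varphi_0^\tau$ with $\varphi_0^{1-z_1(b)}$ on the second subinterval so that the integral evaluates exactly, whereas you weaken the barrier to $2\alpha\varphi_0^\tau$ there and reduce to an explicit trigonometric inequality in $\rho$.
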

\begin{proof}
Let $b\in B^{\alpha,\tau_0}_R$ and $u=\Fcal(b)$. Observe that $\|u\|\leq R$ owing to Proposition \ref{unif_bound}. On the other hand, since the estimates $x_b$ verifies are the same in the two different cases for the associated $\tau$ (see Propositions \ref{prop:estimates1} and \ref{prop:estimates2}), without loss of generality let $\tau\in[\tau_0,\varepsilon/2]$ be the associated zero of $b$ given by the definition of $B^{\alpha,\tau_0}_R$. If $z_2(b)$ is the second positive zero of $x_b$, is straightforward to check $z_2(b)\in(z_1(b)+1-\varepsilon/2, z_1(b)+1+\varepsilon/2  )$. Then, note that $\tau_u := z_1(b)+1+\varepsilon/2 - z_2(b)$ verifies $u(-\tau_u)=0$. Thanks to the Proposition \ref{prop:estimates1}, in order to ensure that $u\in \B^\alpha$, we just need to prove that
\[
    u(t) \geq \gamma_{\tau_u}(t), \quad \forall t\in[ -\tau-z_1(b), -\varepsilon ],
\]
and, indeed, owing the estimates established there, it is enough to show
\begin{align*}
    \alpha\lambda_0 \left( 1+\frac{\delta_0}{f'(0)} \right) \varphi_0^\tau(z_1(b)+1+\varepsilon/2+t) &\geq \gamma_{\tau_u}(t) \quad \forall t\in[-\tau-z_1(b),1-\tau-z_1(b)-\frac{3}{2}\varepsilon]; \\
    \alpha\lambda_0\left( 1+\frac{\delta_0}{f'(0)} \right) \varphi_0^{1-z_1(b)}(z_1(b)+1+\varepsilon/2+t)&\geq \gamma_{\tau_u}(t) \quad \forall t\in[1-\tau-z_1(b)-\frac{3}{2}\varepsilon,-\varepsilon].
\end{align*}

Due to the shape of the functions involved, it can be seen that they intersect at most once in the intervals $[-\tau-z_1(b), 1-\tau - z_1(b) - (3/2)\varepsilon]$ and $[1-\tau - z_1(b) - (3/2)\varepsilon , -\varepsilon]$. Therefore, these estimates follow if they hold at the boundary points of the intervals; i.e., if
\begin{align*}
    \alpha\lambda_0 \left( 1+\frac{\delta_0}{f'(0)} \right) \varphi_0^\tau(1-\tau+\varepsilon/2) &\geq \alpha\varphi_0^{\varepsilon/2}(-\tau-z_1(b)) \\
    \alpha\lambda_0 \left( 1+\frac{\delta_0}{f'(0)} \right) \varphi_0^\tau(2-\tau-\varepsilon) &\geq \alpha\varphi_0^{\tau_u}(1-\tau-z_1(b)-\frac{3}{2}\varepsilon) \\
    \alpha\lambda_0 \left( 1+\frac{\delta_0}{f'(0)} \right) \varphi_0^{1-z_1(b)}(z_1(b)+1-\varepsilon/2) &\geq \alpha\varphi_0^{\tau_u}(-\varepsilon).
\end{align*}

The first inequality is equivalent to
\[
    \lambda_0\left( 1+\frac{\delta_0}{f'(0)} \right) \geq \frac{\varphi_0(1-\tau-z_1(b) + \varepsilon/2)}{ \varphi_0(\varepsilon/2)},
\]
and since $z_1(b)\in (1-\tau-\varepsilon/2, 1-\tau+\varepsilon/2)$ we get that the left-hand side is bounded above by $\varphi_0(\varepsilon)/\varphi_0(\varepsilon/2)<2$, and that inequality holds. The second one is equivalent to
\begin{align*}
    \lambda_0\left( 1+\frac{\delta_0}{f'(0)} \right) &\geq \frac{\varphi_0(2+\tau_u-\tau-z_1(b)-\frac{3}{2}\varepsilon)}{\varphi_0(3-\varepsilon)} \\
    &= \frac{\varphi_0(1-\tau-z_2(b)-\varepsilon)}{\varphi_0(1-\varepsilon)} \\
    &= \frac{\varphi_0(\tau+z_2(b)+\varepsilon)}{\varphi_0(\varepsilon)},
\end{align*}
and due to the bounds on $z_1(b)$ and $z_2(b)$ we deduce that the second term is bounded above by $\varphi_0(2\varepsilon)/\varphi(\varepsilon)<2$, which holds. Finally, the last inequality is equivalent to
\begin{align*}
    \lambda_0\left( 1+\frac{\delta_0}{f'(0)} \right) &\geq \frac{\varphi_0(1+\tau_u-\varepsilon)}{\varphi_0(3-\varepsilon/2)} \\
    &= \frac{\varphi_0(2+z_1(b)-z_2(b)-\varepsilon/2)}{\varphi_0(1-\varepsilon/2)} \\
    &= \frac{\varphi_0(z_1(b)-z_2(b)-\varepsilon/2)}{\varphi_0(\varepsilon/2)},
\end{align*}
as before, the left-hand side is bounded above by 2 an the result follows. 

To conclude the proof, we need to check that $\tau_u\geq \tau_0$. For this, observe
\begin{align*}
    u(0) &= x_b(z_1(b)+1+\varepsilon/2) \\
    &= \frac{1}{\varepsilon} \left( \int_{z_1(b)}^{1-\tau+\varepsilon/2} f(x_b(s)) ds + \int_{1-\tau+\varepsilon/2}^{z_1(b)+\varepsilon} f(x_b(s)) ds \right) \\
    &\leq \frac{1}{\varepsilon} \left(  \int_{z_1(b)}^{1-\tau+\varepsilon/2} f(\alpha\varphi_0^{1-z_1(b)}(s)) ds + \int_{1-\tau+\varepsilon/2}^{z_1(b)+\varepsilon} f\left( \alpha\lambda_0\left( 1+\frac{\delta_0}{f'(0)} \right) \varphi_0^\tau(s) \right) ds \right) \\
    &\leq \alpha\frac{f'(0)+\delta_0}{\varepsilon} \left(  \int_{z_1(b)}^{1-\tau+\varepsilon/2} \varphi_0^{1-z_1(b)}(s) ds + \int_{1-\tau+\varepsilon/2}^{z_1(b)+\varepsilon} \lambda_0 \left( 1+\frac{\delta_0}{f'(0)} \right) \varphi_0^\tau(s) ds \right) \\
    &\leq \alpha\frac{f'(0)+\delta_0}{\varepsilon} \int_{z_1(b)}^{z_1(b)+\varepsilon} \varphi_0^{1-z_1(b)}(s) ds \\
    &= \alpha\lambda_0\left( 1+\frac{\delta_0}{f'(0)} \right) \varphi_0^{1-z_1(b)}(z_1(b)+1+\varepsilon/2) \\
    &= -\alpha\lambda_0\left( 1+\frac{\delta_0}{f'(0)} \right) \varphi_0\left( \frac{\varepsilon}{2} \right).
\end{align*}
Also, observe that the derivative of $u$ is bounded almost everywhere from above by $\frac{2}{\varepsilon}\max_{|x|\leq R} |f(x)|\leq \frac{2R}{\varepsilon}$, and therefore, by the Mean Value Theorem,
\[
    u(t) \leq u(0) - \frac{2R}{\varepsilon}t \leq - \alpha\lambda_0 \left( 1+\frac{\delta_0}{f'(0)} \right) \varphi_0\left( \frac{\varepsilon}{2} \right) - \frac{2R}{\varepsilon}t, \quad t\in[-1-\varepsilon/2,0],
\]
which shows that $\tau_u \geq \tau_0$. Therefore, we have proved that $B^{\alpha,\tau_0}_R$ is a forward invariant set.
\end{proof}

Note that all the sets $V^{\alpha,\tau_0}_R$ possess the fixed-point-property since they are closed and convex subsets of a Banach space (owing to the Schauder fixed point theorem). Therefore, as this property is preserved by homeomorphisms, we are interested in finding a set of the form $\xi_1^{-1}(\xi_2^{-1}(V^{\alpha,\tau_0}_R))\subseteq B^{\alpha,\tau_0}$ which is forward invariant under $\Fcal$.

\begin{prop}\label{prop:invariance}
Let $R\geq \max_{|x|\leq A_0}|f(x)|$. Then, for every $\alpha>0$ small enough the set $\xi_1^{-1}(\xi_2^{-1}(V^{\alpha,\tau_0}_R))$ is forward invariant under $\Fcal$ for every $\tau_0$ that verifies \eqref{eq:tau0}.
\end{prop}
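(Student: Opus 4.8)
The plan is to deduce the statement from the forward invariance of $B^{\alpha,\tau_0}_R$ already established in Lemma~\ref{lem:invariance_B}, by sandwiching the transported set between $B^{\alpha,\tau_0}_R$ and a slightly larger ball and then showing that $\Fcal$ pushes the larger set back down into $B^{\alpha,\tau_0}_R$. Write $\xi:=\xi_2\circ\xi_1$ (a homeomorphism of $B^{\alpha,\tau_0}$ onto $V^{\alpha,\tau_0}$) and $\widetilde V:=\xi_1^{-1}(\xi_2^{-1}(V^{\alpha,\tau_0}_R))=\xi^{-1}(V^{\alpha,\tau_0}_R)$. I would first check
\[
    B^{\alpha,\tau_0}_R \;\subseteq\; \widetilde V \;\subseteq\; B^{\alpha,\tau_0}_{R+C\alpha}
\]
for a constant $C$ depending only on $\varepsilon$. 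The left inclusion uses the explicit form of $\xi$: for $b\in B^{\alpha,\tau_0}_R$ with zero $-\tau$, the components of $\xi(b)=(w_1,w_2,\tau)$ satisfy $0\le w_1(t)=b(h_\tau(t))-\gamma_\tau(h_\tau(t))\le b(h_\tau(t))$ and $b(\tfrac{2\tau}{\varepsilon}t)\le w_2(t)=b(\tfrac{2\tau}{\varepsilon}t)-\alpha\varphi_0^\tau(\tfrac{2\tau}{\varepsilon}t)\le 0$, because $\gamma_\tau\ge 0$ on $[-1-\varepsilon/2,-\tau]$ and $\varphi_0^\tau\le 0$ on $[-\tau,0]$, while $b\ge\gamma_\tau$ resp.\ $b\le\alpha\varphi_0^\tau$ on these intervals; hence $\|w_1\|,\|w_2\|\le\|b\|\le R$, so $\xi(b)\in V^{\alpha,\tau_0}_R$. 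The right inclusion follows since $\xi^{-1}$ restores exactly the order relations of Assumption~\ref{as:Balpha} (whence $\widetilde V\subseteq B^{\alpha,\tau_0}$), while adding back the barriers $\gamma_\tau(h_\tau)$ and $\alpha\varphi_0^\tau(\tfrac{2\tau}{\varepsilon}\cdot)$, both of sup-norm at most $\alpha$, can inflate the norm by at most $C\alpha$.

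Next I would show $\Fcal(\widetilde V)\subseteq B^{\alpha,\tau_0}_R$, which by the left inclusion above gives $\Fcal(\widetilde V)\subseteq\widetilde V$. Let $b\in\widetilde V$; since $b\in\B^\alpha$, its continuation $x_b$ is slowly oscillating (Theorem~\ref{thm:slowly-oscillating-extension}), so $z_1(b)$ and $z_2(b)$ are well defined with $z_1(b)\in(1-\tau-\varepsilon/2,1-\tau+\varepsilon/2)$ and $z_2(b)\in(z_1(b)+1-\varepsilon/2,z_1(b)+1+\varepsilon/2)$, whence $u:=\Fcal(b)$ is defined and has the single zero $\tau_u:=z_1(b)+1+\varepsilon/2-z_2(b)\in(0,\varepsilon)$. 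The hypotheses of Propositions~\ref{prop:estimates1} and \ref{prop:estimates2} are only that $b\in\B^\alpha$, so those estimates apply unchanged; arguing verbatim as in the proof of Lemma~\ref{lem:invariance_B} they yield $u\ge\gamma_{\tau_u}$ on $[-1-\varepsilon/2,-\tau_u]$, $u\le\alpha\varphi_0^{\tau_u}$ on $[-\tau_u,0]$, and $u(0)\le-\alpha\lambda_0\big(1+\tfrac{\delta_0}{f'(0)}\big)\varphi_0(\varepsilon/2)$; together with the almost-everywhere bound $|u'|\le\tfrac{2}{\varepsilon}\max_{|x|\le\|x_b\|}|f(x)|$ and the Mean Value Theorem this produces $\tau_u\ge\tau_0$ under condition \eqref{eq:tau0}. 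Finally $\|u\|\le R$ by Proposition~\ref{unif_bound}, so $u\in B^{\alpha,\tau_0}_R\subseteq\widetilde V$, and since $b\in\widetilde V$ was arbitrary, $\widetilde V$ is forward invariant.

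The main obstacle is the sup-norm bookkeeping across these two steps: the straightening maps $\xi_1,\xi_2$ do not preserve the sup-norm exactly when one returns from $V^{\alpha,\tau_0}_R$ to $B^{\alpha,\tau_0}$, so $\widetilde V$ sits in a ball of radius $R+C\alpha$ rather than in $B^{\alpha,\tau_0}_R$ itself, and consequently the constant $\max_{|x|\le\|x_b\|}|f(x)|$ entering the derivative bound, hence the right-hand side of \eqref{eq:tau0}, is only correct up to a factor $1+O(\alpha)$. This is exactly why the statement is phrased ``for every $\alpha>0$ small enough'': since $\max_{|x|\le A_0}|f(x)|$ is a fixed constant $\le R$ and $|f(x)|<|x|$ for $|x|\ge A_0$, taking $\alpha$ small makes this inflation negligible and keeps $|u'|\le\tfrac{2R}{\varepsilon}$ (equivalently one may simply fix $R$ a little larger than $\max_{|x|\le A_0}|f(x)|+C\alpha$ from the start). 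Everything else is already available: the monotonicity comparisons with the shifted eigenfunctions $\varphi_0^\tau$ on the intervals where $x_b$ keeps a sign, the steepness arguments \eqref{stepness} and \eqref{stepness2} where it changes sign, and the identification of $\tau_u$, are all contained in Propositions~\ref{prop:estimates1}--\ref{prop:estimates2} and Lemma~\ref{lem:invariance_B}, so no genuinely new mechanism is required.
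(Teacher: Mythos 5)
Your proposal is correct and follows essentially the same route as the paper: sandwich $\xi_1^{-1}(\xi_2^{-1}(V^{\alpha,\tau_0}_R))$ between $B^{\alpha,\tau_0}_R$ and $B^{\alpha,\tau_0}_{R+O(\alpha)}$, invoke Lemma \ref{lem:invariance_B} for the shape conditions, and use the strict inequality $|f(x)|<|x|$ for $|x|\geq A_0$ together with continuity to absorb the $O(\alpha)$ norm inflation and recover $\|\Fcal(u)\|\leq R$ for $\alpha$ small. Your interim assertion that $\|u\|\leq R$ follows directly from Proposition \ref{unif_bound} is imprecise (that proposition only gives $\|u\|\leq R+C\alpha$ since $\|b\|$ may exceed $R$), but you correctly identify and repair exactly this point in your closing paragraph, which is precisely where the paper spends its effort.
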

\begin{proof}
First of all, note that
\begin{align*}
    \xi_2^{-1}(V^{\alpha,\tau_0}_R) \subseteq U^{\alpha,\tau_0}_{R+\alpha},\quad
    \xi_1^{-1}(U^{\alpha,\tau_0}_R) \subseteq B^{\alpha,\tau_0}_R.
\end{align*}
Conversely
\begin{align*}
    \xi_1(B^{\alpha,\tau_0}_R) \subseteq U^{\alpha,\tau_0}_R, \quad
    \xi_2(U^{\alpha,\tau_0}_R)\subseteq V^{\alpha,\tau_0}_R.
\end{align*}

Now, let $u\in \xi_1^{-1}(\xi_2^{-1}(V^{\alpha,\tau_0}_R))$, we need to check that $\Fcal(u)\in \xi_1^{-1}(\xi_2^{-1}(V^{\alpha,\tau_0}_R))$. From the previous relations, it is enough to check that $\Fcal(u)\in B^{\alpha,\tau_0}_R$. As $u\in B^{\alpha,\tau_0}_{R+\alpha}$, Lemma \ref{lem:invariance_B} implies $\Fcal(u)$ verifies Assumption \ref{as:Balpha} with $\tau \geq \tau_0$, so we just need to check $\|\Fcal(u)\|\leq R$. To this, observe that
\[
    \Fcal(u)(t) = \frac{1}{\varepsilon}\int_{z_1(u)+t}^{z_1(u)+t+\varepsilon} f(x_u(s))\dd s,
\]
from where, by Proposition \ref{unif_bound}, for every $t\in[-1-\varepsilon/2,0]$
\begin{align*}
    |\Fcal(u)(t)| &\leq \max_{|x|\leq R+\alpha} |f(x)|.
\end{align*}

If $R<A_0$ then for each $\alpha$ small enough such that $R + \alpha\leq A_0$ we get $|\Fcal(u)(t)|\leq R$ for $t\in[-1-\varepsilon/2,0]$. Furthermore, if $R\geq A_0$ then using \eqref{diag_cond} yields
\[
    \max_{A_0\leq |x|\leq R} |f(x)| < R,
\]
and therefore the continuity of $f$ assures us that for every $\alpha$ small enough it holds
\[
    \max_{A_0\leq |x|\leq R+\alpha} |f(x)| \leq R,
\]
from where
\[
    \max_{|x|\leq R+ \alpha} |f(x)| = \max\left\{ \max_{|x|\leq A_0} |f(x)| , \max_{A_0\leq|x|\leq R+\alpha} |f(x)| \right\} \leq R,
\]
and therefore it follows $|\Fcal(u)(t)|\leq R$ for every $t\in[-1-\varepsilon/2,0]$. This completes the proof.
\end{proof}

\section{Proofs of the main results}\label{sec:proofs}

\subsection{Proof of Theorem \ref{thm:slowly-oscillating-extension}}

Thanks to the Proposition \ref{deriv_x} we can apply the same reasoning used in Remark \ref{rem:boundontau} to deduce that $z_j(b)\in (z_{j-1}(b) + 1 -\varepsilon/2, z_{j-1}(b) + 1 + \varepsilon/2)$ for $j\geq1$, where $z_0(b) := -\tau$; and that $(-1)^{j+1} x_b(t)>0$ for $t\in(z_j(b) , z_{j+1}(b))$. Therefore, two consecutive zeros of the extension $x_b$ are at least distanced by $1-\varepsilon/2$ and it follows that $x_b$ is a slowly oscillating solution of the equation \ref{eq:prob}.\qed

\subsection{Proof of Theorem \ref{thm:existence-solutions}}

Due to Proposition \ref{prop:invariance} we know that for every suitable value $R>0$ the set $\xi_1^{-1}(\xi_2^{-1}(V^{\alpha, \tau_0}_R)) \subseteq B^\alpha$ is invariant under the Poincaré map $\Fcal$. Note that it is necessary, in the first instance, to set a value of $\varepsilon$ such that $\lambda_0 = -f'(0) \frac{2}{\pi\varepsilon} \sin\left( \frac{\pi\varepsilon}{2} \right) >2$; and thus the limiting value $\varepsilon_0$ comes from solving the equation
\[
    -f'(0)\frac{2}{\pi\varepsilon_0} \sin\left( \frac{\pi\varepsilon_0}{2} \right) = 2.
\]
Finally, the existence of the periodic solution comes from the fact that $V^{\alpha,\tau_0}_R$ possesses the fixed-point property: indeed it is a convex set in a Banach space, therefore any compact map on $V^{\alpha,\tau_0}_R$ has a fixed point. \qed

\begin{remark}
It is possible to give sharper estimates on the norm of periodic solutions given by Theorem \ref{thm:existence-solutions}. In fact, noting that the main property we use for the value $A_0$ is given in \eqref{diag_cond}, and to maintain this condition it is enough to have $A_0$ as any number strictly larger than $A := \max\{ |\kappa_1|,\kappa_2 \}$. As a consequence, by means of a limit argument, we deduce the existence of a periodic solution in the set $\xi_1^{-1}(\xi_2^{-1}(V^{\alpha,\tau_0}_R))$, with $R = \max_{|x|\leq A} |f(x)|$.
\end{remark}

\subsection{Proof of Theorem \ref{thm:limit-behavior}}

In this paragraph we are interested in the behavior as $\varepsilon$ goes to 0 in a particular symmetric case. Note that taking $\varepsilon\to 0$ in \eqref{eq:prob} formally yields the following difference equation with continuous nonlinearity $f$: 
\[
    b(t) = f(b(t-1)), \quad t\geq 0.
\]
The dynamics of the solutions to this equation relies completely in the geometric properties of $f$, and its asymptotic behavior can be classified in three different classes: asymptotically constant, relaxation type, and turbulent type solutions; following the classification proposed in \cite{sharkovsky2}. In particular, for the class of functions we are considering it can be proved that all the solutions converge, in a suitable topology, to a square-wave function determined by the points of period two of the function $f$. We refer to  \cite{sharkovsky1} for a complete study and discussion on this equation.

Results concerning to the convergence towards this type of solutions, i.e. of square-wave type, have been founded for a singular perturbation of the previous equation, mainly to
\[
    \varepsilon b'(t) = -b(t) + f(b(t-1)), \quad t\geq 0,
\]
see for example \cite{sharkovsky1,nussbaum_mallet}. Also, results of this type have been found for the case of an odd nonlinearity in the integral equation we considered in this work, as can be seen in \cite{CDM}. Motivated by this, in this section we state a convergence result in a weak-symmetric case in which $f$ is not necessarily an odd function, see Figure \ref{fig:4}.

Let $a\in (f'(0),-1)$ and $\kappa_a^+,\kappa_a^-$ be the intersection points between $f$ and the diagonal with slope $a$. Owing to the conditions on the derivatives at $\pm \kappa_0$, taking $a$ close enough to $-1$, there  holds
\begin{equation}\label{eq:diag_condition2}
    \frac{f(x)}{x} < a, \quad x \in(\kappa_a^-, \kappa_a^+) \setminus\{0\}.
\end{equation}

With this in mind, solving the eigenvalue problem
\begin{equation}\label{eq:linear_a}
    \lambda \varphi(t) = \frac{a}{\varepsilon} \int_{t-1-\varepsilon/2}^{t-1+\varepsilon/2} \varphi(s) \dd s
\end{equation}
yields that $\varphi_k(t) = \sin(k \pi t)$ is an eigenfunction for every $k\in\Z$ associated to the eigenvalue $\lambda_{a,k} = -\frac{2a}{k\pi\varepsilon} \sin\left( \frac{k \pi \varepsilon}{2} \right)$. Observe that $\lambda_{a,k}\to -a$ as $\varepsilon\to 0^+$, so we can assume that $\lambda_{a,k}>1$. Thus, if $\varphi_{k,+} := \max(\varphi_k,0)$ then
\[
    \lambda_{a,k} \varphi_{k,+}(t) \geq \lambda_{a,k} \varphi_k(t) = \frac{a}{\varepsilon} \int_{t-1-\varepsilon/2}^{t-1+\varepsilon/2} \varphi_k(s) \dd s \geq \frac{a}{\varepsilon} \int_{t-1-\varepsilon/2}^{t-1+\varepsilon/2} \varphi_{k,+}(s) \dd s,
\]
from where $\varphi_{k,+}$ is a sub-solution of \eqref{eq:linear_a}. From \eqref{eq:linear_a} it also follows that
\begin{equation}\label{eq:lambda_ak}
    \lambda_{a,k} \varphi_k(t) = (-1)^k\frac{a}{\varepsilon} \int_{t-\varepsilon/2}^{t+\varepsilon/2} \varphi_k(s) \dd s,
\end{equation}
and therefore, by taking $k$ an odd number we obtain
\begin{equation}\label{eq:subsolution}
    \lambda_{a,k} \varphi_k(t) = -\frac{a}{\varepsilon} \int_{t-\varepsilon/2}^{t+\varepsilon/2} \varphi_k(s) \dd s \leq -\frac{a}{\varepsilon} \int_{t-\varepsilon/2}^{t+\varepsilon/2} \varphi_{k,+}(s) \dd s.
\end{equation}

\begin{figure}[t]
    \centering
    \begin{tikzpicture}[scale=0.7]
    \draw[->] (-5,0) -- (5,0) node [anchor=west] {$x$};
    \draw[->] (0,-4) -- (0,4) node [anchor=south] {$y$};
    \draw (-4,4) -- (4,-4) node [anchor=west] {$y=-x$};
    \draw (-5,3.2) .. controls (-4,3.15) and (-3.5,3.1) .. (-3,3) .. controls (-2,2.85) and (-1,2.5) .. (0,0) .. controls (1,-2.5) and (2,-2.85) .. (3,-3) .. controls (3.5,-3.1) and (4,-3.15) .. (5,-3.2) node [anchor=west] {$y=f(x)$};
    \draw[red] (-3.35,4) -- (3.35,-4);
    \draw[dashed] (3,-3) -- (3,0) node [anchor=south] {\small{$\kappa_0$}};
    \draw[dashed] (-3,3) -- (-3,0) node [anchor=north] {\small{$-\kappa_0$}};
    \draw[dashed] (-2.4,2.9) -- (-2.4,0) node[anchor=north west] {\small{$\kappa_a^{-}$}};
    \draw[dashed] (2.4,-2.9) -- (2.4,0) node[anchor=south] {\small{$\kappa_a^+$}};
    \end{tikzpicture}
    \caption{Graph of the function $f\colon \R \to \R$ in a symmetric case.}
    \label{fig:4}
\end{figure}

On the other hand, since the restriction of the solutions $b^\varepsilon$ to $[-1-\varepsilon/2,0]$ belongs to $\B^\alpha$ and they are periodic with minimal period, then we deduce all of them have a period of the form $2+z(\varepsilon)$, where $|z(\varepsilon)|\leq \varepsilon$. Moreover, since the equation
\[
    b^\varepsilon(t) = \frac{1}{\varepsilon} \int_{t-1-\varepsilon/2}^{t-1+\varepsilon/2} f(b^\varepsilon(s)) \dd s
\]
is translation invariant, without loss of generality we assume that  $b^\varepsilon(0)=0$ and $\tau_\varepsilon\in(1-\varepsilon/2, 1+\varepsilon/2)$ is its associated first zero.

\begin{lem}
Let $I\subset (0,1)$ be a closed interval and $a\in(f'(0),-1)$ as before. Then there exists $k_0\in\N$ and $\varepsilon_0>0$, independent of $a$, such that for every $k\geq k_0$ with $k\equiv 1 \mod{4}$ and $\varepsilon\leq \varepsilon_0$ there holds
\begin{equation}
	b^\varepsilon(t) \geq \min\left\{ \kappa_a^+, \frac{|\kappa_a^-|}{\lambda_{a,k}} \right\} =: \mu_{a,k}, \quad t\in I.
\end{equation}
\end{lem}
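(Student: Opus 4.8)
The plan is to run a sub-/super-solution comparison against the shifted sub-solutions $\varphi_{k,+}(\,\cdot\,-\theta)$ just constructed. The key observation is that on the part of its orbit where $b^\varepsilon$ is positive, $b^\varepsilon$ behaves as a \emph{strict} super-solution of the linear equation \eqref{eq:linear_a}: by \eqref{eq:diag_condition2} and the monotonicity of $f$, whenever a comparison function $\xi$ satisfies $0\le\xi(s)\le\kappa_a^+$ and $b^\varepsilon(s)\ge\xi(s)$ on the (negative) delay window, one has $f(b^\varepsilon(s))\le a\,\xi(s)$ — if $b^\varepsilon(s)\le\kappa_a^+$ this is \eqref{eq:diag_condition2} at $\xi(s)$, while if $b^\varepsilon(s)>\kappa_a^+$ it follows from $f(b^\varepsilon(s))\le f(\kappa_a^+)=a\kappa_a^+\le a\,\xi(s)$, using that $f$ is decreasing and that $f(x)\ge ax$ for $x\in[\kappa_a^+,\kappa_0]$. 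Symmetrically, over the next half-period (where $b^\varepsilon<0$) the reversed inequality holds, the possible excursions of $b^\varepsilon$ below $\kappa_a^-$ again being absorbed by the monotonicity of $f$. Since $\varphi_{k,+}$ and all its translates are sub-solutions of \eqref{eq:linear_a} by the argument proving \eqref{eq:subsolution} — this is where $k$ odd is used, to turn the delay shift into a sign change — the two may be compared.

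Concretely, I would fix a closed interval $I\subset(0,1)$ and, with the normalisation $b^\varepsilon(0)=0$, $b^\varepsilon>0$ on $(0,\tau_\varepsilon)$ already in force, prove that for every $\theta$ and every $c\le\mu_{a,k}$ one has $b^\varepsilon(t)\ge c\,\varphi_{k,+}(t-\theta)$ for $t$ in the positive half-period. The two terms in $\mu_{a,k}=\min\{\kappa_a^+,\,|\kappa_a^-|/\lambda_{a,k}\}$ are exactly the range constraints keeping the comparison function inside $(\kappa_a^-,\kappa_a^+)$: amplitude $\le\kappa_a^+$ on the positive half-period, and amplitude $\le|\kappa_a^-|$ after one half-period's amplification by the eigenvalue $\lambda_{a,k}$. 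Indeed, feeding $b^\varepsilon\ge c\,\varphi_k(\,\cdot\,-\theta)$ into the equation and using \eqref{eq:linear_a}–\eqref{eq:lambda_ak} produces $b^\varepsilon\le c\lambda_{a,k}\,\varphi_k(\,\cdot\,-\theta)$ on the next half-period, and then $b^\varepsilon\ge c\lambda_{a,k}^2\,\varphi_k(\,\cdot\,-\theta)$ one half-period later. Since $\lambda_{a,k}\to|a|>1$ as $\varepsilon\to0$, for $\varepsilon$ small this amplification is genuine ($\lambda_{a,k}>1$), so by periodicity of $b^\varepsilon$ a maximal-constant (sweeping) argument forces the largest admissible $c$ to exceed $\mu_{a,k}$. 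Finally, given $t\in I$, I would pick $\theta=t-\tfrac1{2k}$ so that $\varphi_{k,+}(t-\theta)=\sin(\pi/2)=1$; this is legitimate once $k\ge k_0$ is large and $\varepsilon\le\varepsilon_0$ is small enough that the hump $[t-\tfrac1{2k},t+\tfrac1{2k}]$, of width $1/k$, lies inside $(0,\tau_\varepsilon)$, which holds since $\mathrm{dist}(I,\{0,1\})>0$ and $\tau_\varepsilon\to1$. This gives $b^\varepsilon(t)\ge\mu_{a,k}$.

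The hard part is not this mechanism but the bookkeeping needed to make the sweeping rigorous. Since $\varphi_{k,+}$ vanishes on its valleys while $b^\varepsilon$ vanishes only at the two endpoints of each half-period, the comparison $b^\varepsilon\ge c\,\varphi_{k,+}(\,\cdot\,-\theta)$ can only be expected once the translates are aligned so that a valley sits over each zero of $b^\varepsilon$; matching the signs here is precisely what upgrades $k$ odd to $k\equiv1\pmod4$, and it interacts with the fact that the minimal period of $b^\varepsilon$ is $2+z(\varepsilon)$ with $|z(\varepsilon)|\le\varepsilon$ rather than exactly $2$, so that the translate drifts by $-z(\varepsilon)$ at each turn of the loop and the iteration must be closed after finitely many steps (equivalently, carried out over the whole family of translates simultaneously). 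One must also \emph{start} the induction: this is where the membership of $b^\varepsilon|_{[-1-\varepsilon/2,0]}$ in $\B^\alpha$, together with the slope estimate of Proposition \ref{deriv_x}, supplies an initial comparison $b^\varepsilon\ge c_0\,\varphi_{k,+}(\,\cdot\,-\theta)$ with some (possibly tiny) $c_0>0$ from which the sweeping proceeds. I expect these alignment-and-period issues to absorb most of the argument, the comparison estimates themselves being a direct consequence of \eqref{eq:diag_condition2}, the monotonicity of $f$, and \eqref{eq:subsolution}.
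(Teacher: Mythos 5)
Your proposal follows essentially the same route as the paper: comparison of $b^\varepsilon$ with translates of the truncated eigenfunctions $\varphi_{k,+}$, the range constraints that produce the two terms of $\mu_{a,k}$, the two-fold iteration of the equation over one (almost-)period giving amplification by $\lambda_{a,k}^2>1$, and the membership of $b^\varepsilon|_{[-1-\varepsilon/2,0]}$ in $\B^\alpha$ to seed the comparison. All the ingredients are correctly identified.

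The one place where you diverge — and where you defer the work as ``bookkeeping'' — is exactly where the paper's proof has its specific device, and your first proposed closure would not survive as stated. The pointwise iteration $c\mapsto c\lambda_{a,k}^2$ on a \emph{fixed} translate fails near the endpoints of the hump: after one period the conclusion reads $b^\varepsilon(t)\ge c\lambda_{a,k}^2\varphi_k^T(t+z(\varepsilon))$, and the drifted profile is negative just inside one end of the original hump, so it does not dominate $c'\varphi_k^T(t)$ there for any $c'>c$; hence no improved constant for the same translate. Your alternative (carrying the whole family of translates and shrinking the admissible window by $|z(\varepsilon)|\le\varepsilon$ at each of the boundedly many steps) can be made to work, but note that the paper avoids iteration entirely: it defines $\mu^*_{T,\varepsilon}$ as a supremum, runs the loop \emph{once}, uses a touching point $t_0$ and a Taylor expansion $\varphi_k^T(t_0+z(\varepsilon))=\varphi_k^T(t_0)+O(\varepsilon)$ to absorb the drift, and obtains $b^\varepsilon(t_0)\le C\varepsilon/(\lambda_{a,k}^2-1)$ as in \eqref{eq:bound-b-epsilon}; this is then contradicted by the $\varepsilon$-uniform positive lower bound $b^\varepsilon\ge\gamma_{\tau_\varepsilon}$ on the slightly enlarged interval $J$, coming from the $\B^\alpha$ barrier. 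So the barrier is not merely the seed $c_0$ of an induction, as in your sketch, but the quantitative floor that closes the contradiction in a single step; if you pursue your iterative variant you must still verify that the number of steps, hence the total drift $n\,z(\varepsilon)$, is bounded uniformly in $\varepsilon$, which again rests on that same uniform floor. A last minor point: since \eqref{eq:subsolution} is translation-invariant, only $k$ odd is needed for the sub-solution property; the condition $k\equiv 1\pmod 4$ enters because the paper's specific shift $\varphi_k^T=\varphi_k(\cdot+1/2-T)$ must place a \emph{positive} hump of height $1$ at $T$, i.e.\ $\sin(k\pi/2)=1$, rather than through any sign-matching along the delay.
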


\begin{proof}
Let $k_0$ and $\varepsilon_0$ large and small enough, respectively, such that $0<\min I -1/2k_0 < \max I + 1/2k_0<1-\varepsilon_0/2$. In particular, this implies that $\max I + 1/2k_0 < \tau_\varepsilon$ for every $\varepsilon\leq \varepsilon_0$. Now, take $T\in I$ and $k\geq k_0$ with $k \equiv 1 \mod{4}$ and set
\[
    \mu_{T,\varepsilon}^* = \sup \left\{ \mu\geq 0 \colon b^\varepsilon(t) \geq \mu\varphi_k^T(t), \quad t\in\left[ T- \frac{1}{2k},T+\frac{1}{2k} \right] \right\},
\]
where $\varphi_k^T(t) = \varphi_k(t+1/2-T)$. This value is well defined since $b^\varepsilon$ is bounded below by the barrier of eigenfunctions $\gamma_{\tau_\varepsilon}$ defined in Assumption \ref{as:Balpha}. Then, for $t\in\left[ T- \frac{1}{2k},T+\frac{1}{2k} \right]$ there holds 
\begin{align*}
    b^{\varepsilon}(t+2+z(\varepsilon)) &= \frac{1}{\varepsilon} \int_{t+z(\varepsilon)-\varepsilon/2}^{t+z(\varepsilon)+\varepsilon/2} f(b^\varepsilon(s+1)) \dd s \\
    &= \frac{1}{\varepsilon} \int_{t+ z(\varepsilon)-\varepsilon/2}^{t+ z(\varepsilon)+\varepsilon/2} f\left( \frac{1}{\varepsilon} \int_{s-\varepsilon/2}^{s+\varepsilon/2} f(b^\varepsilon(r)) \dd r \right) \dd s.
\end{align*}

We shall prove that there exists $\varepsilon_0>0$ such that for every $\varepsilon\leq \varepsilon_0$ we have $\mu_{T,\varepsilon}^* \geq \mu_{a,k}$. If we assume by contradiction that $\mu_{T,\varepsilon}^* < \min \{ \kappa_a^+, |\kappa_a^-|/\lambda_{a,k} \}$ for some $\varepsilon$ as small as we want, then owing to \eqref{eq:diag_condition2} we get $f(\mu_{T,\varepsilon}^*) \leq a \mu_{T,\varepsilon}^*$ and $f(-\lambda_a \mu_{T,\varepsilon}^*) \geq -a \lambda_a \mu_{T,\varepsilon}^*$, which jointly with \eqref{eq:subsolution} imply, for $\varepsilon$ small,
\begin{align*}
    \frac{1}{\varepsilon} \int_{s-\varepsilon/2}^{s+\varepsilon/2} f(b^\varepsilon(r)) \dd r &\leq \frac{1}{\varepsilon} \int_{s-\varepsilon/2}^{s+\varepsilon/2} f(\mu_{T,\varepsilon}^* \varphi_{k,+}^T(r)) \dd r \\
    &\leq \frac{\mu_{T,\varepsilon}^* a}{\varepsilon} \int_{s-\varepsilon/2}^{s+\varepsilon/2} \varphi_{k,+}^T(r) \dd r \\
    &\leq -\lambda_{a,k} \mu_{T,\varepsilon}^* \varphi_k^T(s).
\end{align*}
Consequently, applying \eqref{eq:lambda_ak} we obtain
\begin{align*}
    b^\varepsilon(t) &\geq \frac{1}{\varepsilon} \int_{t+z(\varepsilon)-\varepsilon/2}^{t+z(\varepsilon)+\varepsilon/2} f(-\lambda_{a,k} \mu_{T,\varepsilon}^* \varphi_k^T(s)) \dd s \\
    &\geq -\frac{a\lambda_{a,k} \mu_{T,\varepsilon}^*}{\varepsilon} \int_{t+z(\varepsilon)-\varepsilon/2}^{t+z(\varepsilon)+\varepsilon/2} \varphi_k^T(s) \dd s \\
    &= \lambda_{a,k}^2 \mu_{T,\varepsilon}^* \varphi_k^T(t+z(\varepsilon)).
\end{align*}

Now, by the definition of $\mu_{T,\varepsilon}^*$ there exists $t_0\in \left[ T- \frac{1}{2k},T+\frac{1}{2k} \right]$ such that $b^\varepsilon(t_0) = \mu_{T,\varepsilon}^* \varphi_k^T(t_0)$. Thus, using a Taylor expansion it follows
\begin{align*}
    b^\varepsilon(t_0) & \geq \lambda_{a,k}^2 \mu_{T,\varepsilon}^* \varphi_k^T (t_0 + z(\varepsilon)) \\
    & = \lambda_{a,k}^2 \mu_{T,\varepsilon}^* [ \varphi_k^T(t_0) + O(\varepsilon) ] \\
    & = \lambda_{a,k}^2 b^\varepsilon(t_0) + O(\varepsilon) \\
    &\geq \lambda_{a,k}^2 b^\varepsilon (t_0) - C\varepsilon,
\end{align*}
for a fixed constant $C>0$. Then,
\begin{equation}\label{eq:bound-b-epsilon}
    b^\varepsilon (t_0) \leq \frac{C \varepsilon}{\lambda_{a,k}^2 - 1}.
\end{equation}

Furthermore, as $b^\varepsilon\in \B^\alpha$ we know that $b^\varepsilon(t)\geq \gamma_{\tau_\varepsilon}(t)$ for all $t\in[0,\tau_\varepsilon]$, and minimizing the right-hand side of this inequality on the interval $J:=\left[ \min I - \frac{1}{2k}, \max I + \frac{1}{2k} \right]$ yields
\begin{align*}
    b^\varepsilon(t) &\geq \min\left\{ \gamma_{\tau_\varepsilon}\left( \min I - \frac{1}{2k} \right), \gamma_{\tau_\varepsilon}\left( \max I + \frac{1}{2k} \right) \right\} \\
    &= \alpha \min \left\{ \sin\left( \pi\left( \min I - \frac{1}{2k} \right)  \right) , \sin\left( \pi \left( \tau_\varepsilon -\max I -\frac{1}{2k} \right) \right) \right\},
\end{align*}
for every $t\in J$. Next, since $\tau_\varepsilon\in(1-\varepsilon/2,1+\varepsilon/2)$ the following bound holds
\[
    b^\varepsilon(t) \geq \alpha \min\left\{ \sin\left( \pi \left( \min I - \frac{1}{2k} \right) \right), \sin\left( \pi \left( 1 - \max I -\frac{1}{2k} - \frac{\varepsilon}{2} \right) \right) \right\}, \quad \forall t\in J.
\]
Observe this inequality shows that $b^\varepsilon(t)$ keeps positive in a uniform way with respect to $\varepsilon$, and this is a contradiction with \eqref{eq:bound-b-epsilon} for $\varepsilon$ small enough.

As a result, there exists $\varepsilon_0>0$ small enough such that for all $\varepsilon\leq \varepsilon_0$ there holds $\mu_{T,\varepsilon}^* \geq \mu_{a,k}$ and therefore
\[
    b^\varepsilon(t) \geq \min\left\{ \kappa_a^+, \frac{|\kappa_a^-|}{\lambda_{a,k}} \right\} \varphi_k^T(t), \quad t\in \left[T- \frac{1}{2k}, T+\frac{1}{2k} \right],
\]
from where, evaluating at $t=T$ and since $T$ is an arbitrary element of the interval $I$ the result follows.
\end{proof}

\begin{proof}[Proof of Theorem \ref{thm:limit-behavior}]
Using the hypothesis over the boundedness of the family $\{ b^\varepsilon \}_\varepsilon$ and the previous Lemma, we get
\[
     \kappa_0 \geq b^\varepsilon(t) \geq \min\left\{ \kappa_a^+, \frac{|\kappa_a^-|}{\lambda_{a,k}} \right\}, \quad t\in I,
\]
and thus
\[
    \max_{t\in I} |b^\varepsilon(t) - \kappa_0| \leq \left| \kappa_0 - \min\left\{ \kappa_a^+, \frac{|\kappa_a^-|}{\lambda_{a,k}} \right\} \right|,
\]
from where it is clear that, taking $\varepsilon\to 0^+$ and $a\to -1^{-}$,
\[
    \limsup_{\varepsilon\to 0^+} \max_{t\in I} |b^\varepsilon(t) - \kappa_0| = 0.
\]
We have proved therefore that $b^\varepsilon\to \kappa_0$ uniformly on $I$. In a similar fashion is possible to prove that $b^\varepsilon \to -\kappa_0$ uniformly on every compact subset of $(1,2)$ and then the first statement of the theorem follows.

The assertion concerned to the convergence in the $L^1_{loc}$-norm is proved in the following way. Observe that for every $n\in \N$ we have
\begin{align*}
    \int_{-2n}^{2n} |b^\varepsilon(t) - b^*(t)| \dd t &= \sum_{k=-n}^{n-1} \int_{2k}^{2(k+1)} |b^\varepsilon(t) - b^*(t)| \dd t \\
    &= \sum_{k=-n}^{n-1} \int_0^2 |b^\varepsilon(t+2k) - b^*(t+2k)| \dd t \\
    &= \sum_{k=-n}^{n-1} \int_0^2 |b^\varepsilon(t - kz(\varepsilon)) - b^*(t)| \dd t \\
    &= \sum_{k=-n}^{n-1} \int_{-kz(\varepsilon)}^{2-kz(\varepsilon)} |b^\varepsilon(t) - b^*(t + kz(\varepsilon))| \dd t \\
    &\leq \sum_{k=-n}^{n-1} \int_{-nz(\varepsilon)}^{2+nz(\varepsilon)} |b^\varepsilon(t) - b^*(t+kz(\varepsilon))| \dd t,
\end{align*}
and the result follows by a simple application of the dominated convergence theorem since $b^\varepsilon\to b^*$ and $b^*(\cdot + kz(\varepsilon))\to b^*$ pointwise almost everywhere as $\varepsilon\to 0$.
\end{proof}

\section{The Gurtin-MacCamy population model}\label{sec:gurtin}

Next we show how our results can shed some light on the limit behavior of the solutions to the Gurtin-MacCamy population model with a nonlinear birth function:
\begin{equation}\label{GM}
    \begin{cases}
        \ds (\partial_t + \partial_a)u(t,a) = -\mu u(t,a), & t>0,~ a>0 \\
        \ds u(t,0) = f\left( \int_0^\infty \gamma(a)u(t,a)\dd a \right), & t>0, \quad u(0,\cdot)=u_0\in L^1_+(\R_+),
    \end{cases}
\end{equation}
where $f$ is a unimodal function with $f(0)=f(+\infty)=0$ and for which the equation $f(x)=x$ has a unique positive solution that  we denote $\kappa$. Also, we assume that the kernel $\gamma(a)$ is normalized by
\[
    \int_0^\infty \gamma(a)e^{-\mu a}\dd a = 1.
\]

The method of characteristics yields the following solution to \eqref{GM}
\begin{equation}\label{eq:sol_GM}
    u(t,a) = \begin{cases}
        e^{-\mu t} u_0(a-t) & a\geq t,\\
        e^{-\mu a} b(t-a) & t\geq a,
    \end{cases}
\end{equation}
where $b\colon \R_+ \to \R_+$ is a solution for the equation
\begin{equation}\label{sol_b}
    b(t) = f\left( \int_t^{\infty} \gamma(a)e^{-\mu t} u_0(a-t)\dd a + \int_0^t \gamma(a)e^{-\mu a} b(t-a)\dd a \right), \quad t>0.
\end{equation}

In \cite{MMa}, the authors consider the model \eqref{GM} for the particular case of the Nicholson's (or Ricker's) nonlinearity, that is $f(x)=\alpha xe^{-x}$, and proved that for every $\alpha^\star>e^2$ there exists a kernel $\gamma(a)$ such that a Hopf bifurcation occurs at $\alpha=\alpha^*\in(e^2,\alpha^\star)$.

Due to \eqref{eq:sol_GM}, we can note that the asymptotic behavior of the solutions is determined by $b(t)$. Motivated by this, in \cite{HT} the authors consider the same model for a general class of unimodal functions satisfying the conditions stated above, and by means of a low-dimensional dynamical system approach they found out some conditions over the nonlinearity $f$ and the kernel $\gamma$ which implies that the equilibrium solution of the system, $\overline{u}(a)=\kappa e^{-\mu a}$, is globally attracting.

Let $(t_n)_{n\in \N}$ be a sequence of numbers such that $t_n\to +\infty$ and set $b_n(t) = b(t + t_n)$. We can pass to the limit by taking an appropriate subsequence which yields that the $\omega$-limit solutions for \eqref{sol_b} verify
\begin{equation}\label{eq:omegalimit}
    b(t) = f\left( \int_0^\infty \gamma(a)e^{-\mu a} b(t-a)\dd a \right), \quad t\in\R
\end{equation}
which is equivalent to the equation
\[
    B(t) = \int_0^\infty \gamma(a)e^{-\mu a} f(B(t-a))\dd a
\]
by taking $B(t)= \int_0^\infty \gamma(a)e^{-\mu a} b(t-a)\dd a$ and $b(t)=f(B(t))$. In addition, making the following choices
\begin{align}
    \gamma(a)e^{\mu a} &= \begin{cases}
        \frac{1}{\varepsilon},  & a\in[1-\varepsilon/2,1+\varepsilon/2], \\
        0 & \text{otherwise},
    \end{cases} \label{eq:delay_kernel}\\
    \zeta(t) &= B(t)+\kappa, \notag \\
    F(t) &= f(t+\kappa) - \kappa, \notag
\end{align}
then the last equation becomes
\[
    \zeta(t) = \frac{1}{\varepsilon}\int_{1-\varepsilon/2}^{1+\varepsilon/2} F(\zeta(t-a))\dd a,
\]
and the function $F$ satisfies the conditions needed in Theorem \ref{thm:existence-solutions} if we impose certain conditions over $f$. More precisely:

\begin{thm}\label{main_th2}
Let $f$ be a unimodal $C^1$-function with $f(0)=f(+\infty)=0$, $f(x)=x$ has a unique positive solution $\kappa$ and $f(x)>x$ for every $x\in(0,\kappa)$. In addition, let suppose $f'(\kappa)<-2$ and $f(\|f\|)> x_*$ where $x_*$ is the unique value less that $\kappa$ such that $f(x_*)=\kappa$. Then, the Gurtin-MacCamy population model \eqref{GM} with the delayed kernel \eqref{eq:delay_kernel} admits an asymptotically periodic solution. (Recall, $\|\cdot\|$ denotes the sup norm)
\end{thm}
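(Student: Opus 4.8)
The plan is to use the reduction of the $\omega$-limit dynamics of \eqref{sol_b} to \eqref{eq:prob}, apply Theorem~\ref{thm:existence-solutions} to the corresponding shifted nonlinearity, and then lift the periodic orbit so obtained back to \eqref{GM}. As explained above, with the kernel \eqref{eq:delay_kernel} the $\omega$-limit equation \eqref{eq:omegalimit} is equivalent, through $B(t)=\int_0^\infty\gamma(a)e^{-\mu a}b(t-a)\,\dd a$ and the substitution $\zeta=B-\kappa$, to equation \eqref{eq:prob} for $\zeta$ with nonlinearity $F(x)=f(x+\kappa)-\kappa$. Hence it suffices to produce a bounded, strictly positive, periodic solution $b^*$ of \eqref{eq:omegalimit}: then $u(t,a):=e^{-\mu a}b^*(t-a)$ will be a periodic solution of \eqref{GM}.

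First I would isolate the window in which $F$ is controlled by the hypotheses. Since $f'(\kappa)<-2<0$, the interior maximum $x_m$ of $f$ lies strictly below $\kappa$, so $f$ is strictly decreasing on $[\kappa,\infty)$ and $\|f\|=f(x_m)>f(\kappa)=\kappa$; consequently $f(\|f\|)\in(0,\kappa)$, and from $f>\mathrm{id}$ on $(0,\kappa)$ one gets $f(f(\|f\|))>f(\|f\|)$, whence, by unimodality, $I^*:=[f(\|f\|),\|f\|]$ is forward invariant for the one-dimensional map $x\mapsto f(x)$. Translating, $J^*:=[\,f(\|f\|)-\kappa,\ \|f\|-\kappa\,]$ is forward invariant for the $F$-dynamics, and on $J^*$ the map $F$ already has the negative-feedback sign: for $x>0$, $F(x)=f(x+\kappa)-\kappa<0$ because $f<\kappa$ on $(\kappa,\infty)$; and for $x\in[f(\|f\|)-\kappa,0)$ we have $x+\kappa\in[f(\|f\|),\kappa)\subset(x_*,\kappa)$, on which $f>\kappa$, so $F(x)>0$ — this is exactly where $f(\|f\|)>x_*$ is used. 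I would then extend $F$ to $\widetilde F\colon\R\to\R$ by freezing it to the left of $J^*$, say $\widetilde F(x)=F\big(\max\{x,\,f(\|f\|)-\kappa\}\big)$; this $\widetilde F$ is continuous, coincides with $F$ on $J^*$, has $\widetilde F'(0)=f'(\kappa)<-2$, satisfies $x\widetilde F(x)<0$, and (using $f(+\infty)=0$ and that $\widetilde F$ is eventually a positive constant as $x\to-\infty$) satisfies the remaining limit conditions of Assumption~\ref{as:f}.

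With $\widetilde F$ admissible, for $\varepsilon$ small enough Theorem~\ref{thm:existence-solutions} together with the remark following it yields a periodic, slowly oscillating solution $\zeta^\varepsilon$ of \eqref{eq:prob} (with nonlinearity $\widetilde F$) with $\|\zeta^\varepsilon\|\le R$, where $R=\max_{|x|\le A}|\widetilde F(x)|$ and $A=\max\{|\kappa_1|,\kappa_2\}$ are the constants attached to $\widetilde F$. The step that needs care is to check that $R$ is small enough that $\zeta^\varepsilon$ stays inside $J^*$, where $\widetilde F\equiv F$; I would do this by locating $\kappa_1,\kappa_2$ (the first zeros of $x\mapsto\widetilde F(x)+x$) via the intersections of $f$ with the line $y\mapsto 2\kappa-y$ and estimating $|\widetilde F|$ on $[-A,A]$ in terms of $\kappa$, $\|f\|$ and $f(\|f\|)$, again invoking $f'(\kappa)<-2$ and $f(\|f\|)>x_*$. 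Granting this, $\zeta^\varepsilon$ solves \eqref{eq:prob} with the genuine nonlinearity $F$, so $B^\varepsilon:=\zeta^\varepsilon+\kappa$ ranges in $I^*\subset(0,\infty)$ and solves $B(t)=\tfrac1\varepsilon\int_{1-\varepsilon/2}^{1+\varepsilon/2}f(B(t-a))\,\dd a$, hence $b^\varepsilon:=f(B^\varepsilon)$ is a bounded, strictly positive (bounded below by $f(\|f\|)>0$), periodic solution of \eqref{eq:omegalimit}. Finally I would set $u_0(a):=e^{-\mu a}b^\varepsilon(-a)\in L^1_+(\R_+)$; inserting this history into \eqref{sol_b} and using uniqueness of the Volterra solution, an induction over time-steps of length $1-\varepsilon/2$ shows the resulting birth rate equals $b^\varepsilon$ on $[0,\infty)$, and then \eqref{eq:sol_GM} gives $u(t,a)=e^{-\mu a}b^\varepsilon(t-a)$, which is periodic in $t$, hence in particular an asymptotically periodic solution of \eqref{GM}.

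The reduction and the lifting are essentially bookkeeping; the substantive point is the quantitative estimate in the penultimate step — keeping the orbit produced by Theorem~\ref{thm:existence-solutions} inside the window $J^*$ on which the truncation $\widetilde F$ is inactive. That is precisely where the two numerical hypotheses $f'(\kappa)<-2$ and $f(\|f\|)>x_*$ are consumed: the former forces the maximum of $f$ to sit before $\kappa$ (so that $I^*$ is invariant and $F$ is decreasing at $0$ with slope below $-2$), while the latter guarantees that $F$ keeps the correct sign all the way down to the bottom $f(\|f\|)-\kappa$ of the window; if either failed, the constructed orbit could leave the range on which $F$ inherits the negative-feedback structure, and the passage through \eqref{eq:omegalimit} would break down.
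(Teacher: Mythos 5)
Your overall strategy is the same as the paper's: pass to the $\omega$-limit equation \eqref{eq:omegalimit}, reduce it to \eqref{eq:prob} through $B(t)=\int_0^\infty\gamma(a)e^{-\mu a}b(t-a)\,\dd a$ and $\zeta=B-\kappa$, truncate the shifted nonlinearity below the relevant window so that Assumption \ref{as:f} holds, apply Theorem \ref{thm:existence-solutions}, and argue that the orbit so obtained never visits the region where the truncation is active. Your verification that $\widetilde F$ satisfies Assumption \ref{as:f} — in particular that the sign condition on $[f(\|f\|)-\kappa,0)$ is exactly where $f(\|f\|)>x_*$ is consumed — is correct and in fact more explicit than the paper, which truncates at $x_\star-\kappa$ for a fixed $x_\star\in(x_*,f(\|f\|))$ (i.e.\ slightly below your cutoff; the difference is immaterial). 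The lifting back to \eqref{GM} at the end is routine and fine.

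The genuine gap is precisely the step you flag as needing care and then grant: that the periodic orbit $\zeta^\varepsilon$ of the $\widetilde F$-equation stays in $J^*=[f(\|f\|)-\kappa,\|f\|-\kappa]$. Your proposed route — show $[-R,R]\subseteq J^*$ with $R=\max_{|x|\le A}|\widetilde F(x)|$, $A=\max\{|\kappa_1|,\kappa_2\}$ — is not carried out, and it is not clear it can be: one always has $R\ge\max\{|\kappa_1|,\kappa_2\}$, and nothing in the hypotheses $f'(\kappa)<-2$ and $f(\|f\|)>x_*$ prevents $|\kappa_1|$ from exceeding $\kappa-f(\|f\|)$ (if $f(y)>2\kappa-y$ on all of $(f(\|f\|),\kappa)$, the first negative zero of $\widetilde F(x)+x$ already lies at or below $f(\|f\|)-\kappa$, so $-R\le f(\|f\|)-\kappa$ and the inclusion fails even though the orbit itself may be fine). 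The paper closes this step by a bound on \emph{solutions} rather than on the ball $[-R,R]$: it invokes the a priori estimates $f(\|f\|)\le b(t)\le\|f\|$ for bounded solutions of the renewal equation with unimodal nonlinearity (citing Theorem 1 of Herrera--Trofimchuk and Proposition 5.12 of Smith--Thieme), so that ``the set of solutions remains unchanged'' by the truncation. Concretely, for your $\widetilde F$ this is a two-fold iteration of the equation itself along the periodic orbit: one application gives $-\kappa\le\zeta^\varepsilon(t)\le\sup\widetilde F\le\|f\|-\kappa$ for all $t$; a second application then gives $\zeta^\varepsilon(t)\ge\inf_{x\in[-\kappa,\|f\|-\kappa]}\widetilde F(x)=\min\{f(f(\|f\|)),f(\|f\|)\}-\kappa=f(\|f\|)-\kappa$, since $f(f(\|f\|))>\kappa>f(\|f\|)$. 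Substituting this argument for your unproved estimate on $R$ completes the proof; as written, the containment — which you correctly identify as the substantive point — is asserted but not established.
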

\begin{proof}
As the asymptotic behavior of the solutions to \eqref{GM} relies on the solutions to \eqref{eq:omegalimit} with $f(\|f\|)\leq b(t)\leq \|f\|$ (see \cite[Theorem 1]{HT} and \cite[Proposition 5.12]{SmithThieme}), if we define
\[
    F(t) = \begin{cases}
        f(t+\kappa)-\kappa, & t\geq x_\star-\kappa,\\
        f(x_*) , & t\leq x_\star-\kappa,
    \end{cases}
\]
for any fixed $x_\star\in(x_*, f(\|f\|))$, then $F$ verifies the hypothesis of Theorem \ref{thm:existence-solutions} and the set of solutions remains unchanged. Thus, the application of this result yields the existence of a periodic orbit for \eqref{eq:omegalimit} with the kernel given by \eqref{eq:delay_kernel}.
\end{proof}

Thus, Theorem \ref{main_th2} shows that there are periodic orbits for the Gurtin-MacCamy model. Although this does not mean that all the solutions for this case have a periodic long-term behavior, since these questions rely on the stability of such periodic solution, it show us the quite variety and complexity of the dynamics this model encompasses. Actually, is worth mentioning that for $|f'(\kappa)|\gg 1$ we suspect the model will show a chaotic behavior.

\section{Numerical simulations}\label{sec:simulations}

In this section we provide numerical simulations done for explicit functions $f$. This lets us visualize the results stated before in a mathematical way, and also observe new phenomena which can be the subject of further studies around this type of problems.

All the numerical simulations were carried out by using the Simpson rule to approximate the integral. Also, we consider constant initial data, unlike the functions considered in the proof of the existence of periodic solutions, for simplicity.

\begin{figure}[t!]
    \centering
    \begin{subfigure}{0.4\textwidth}
        \centering
        \includegraphics[scale=0.45]{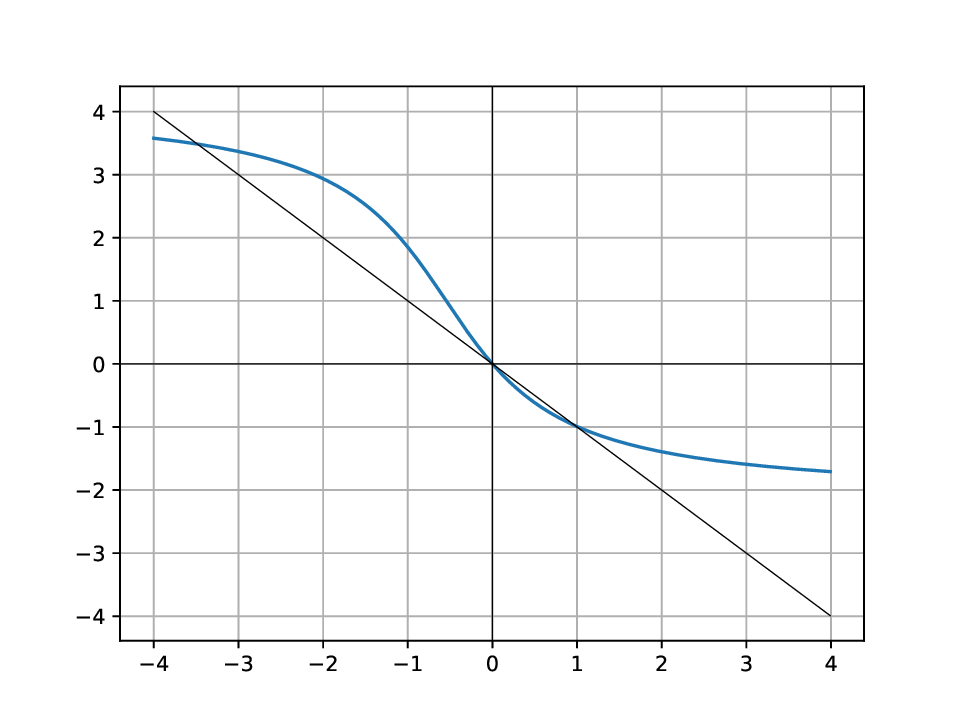}
        \caption{Non-symmetric but monotone case}
        \label{Fig:f_1}
    \end{subfigure}
    ~
    \begin{subfigure}{0.4\textwidth}
        \centering
        \includegraphics[scale=0.45]{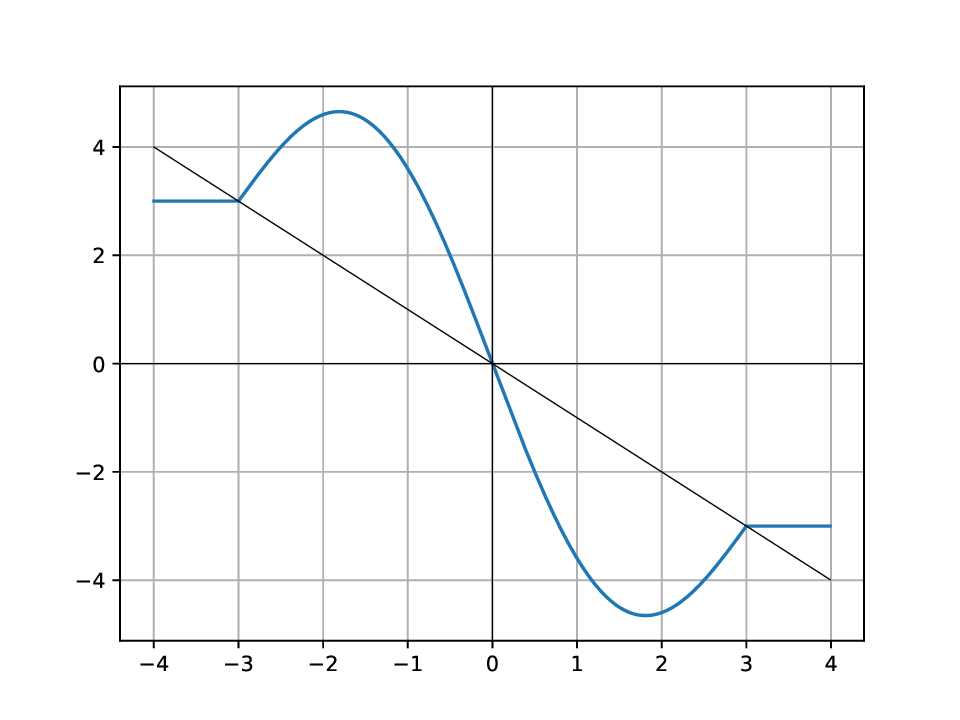}
        \caption{Symmetric but non-monotone case}
        \label{Fig:f_2}
    \end{subfigure}\\
    \begin{subfigure}{0.4\textwidth}
        \centering
        \includegraphics[scale=0.45]{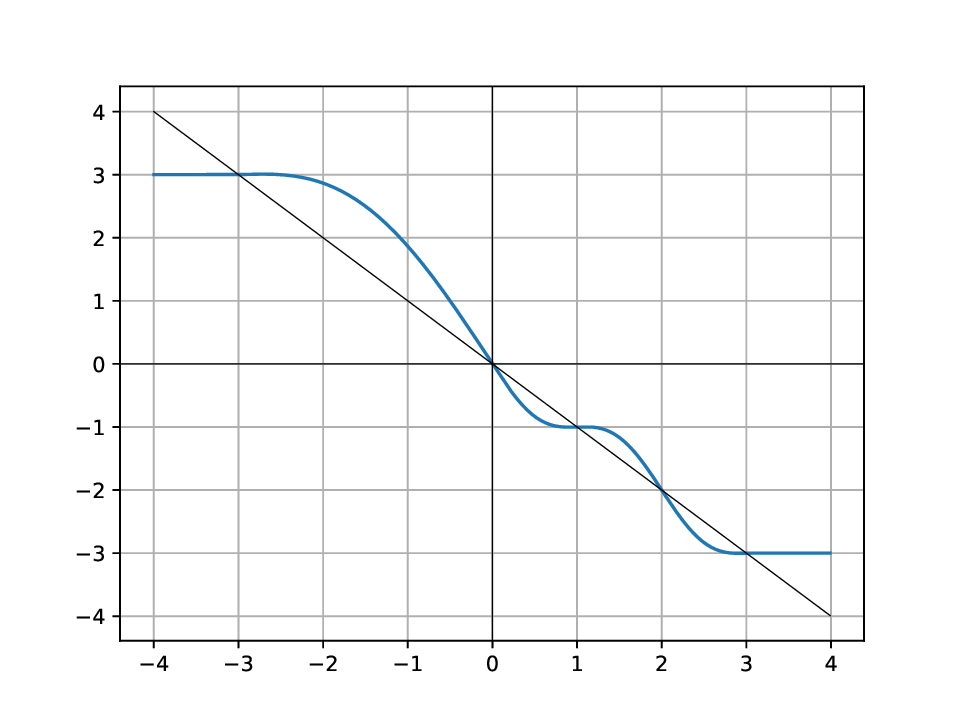}
        \caption{Case with several intersections with the diagonal}
        \label{Fig:f_3}
    \end{subfigure}
    \caption{Different cases for the nonlinearity $f\colon \R \to \R$ under consideration.}
    \label{fig:nonlinearities}
    
\end{figure}

\subsection{A non-symmetric but monotone nonlinearity}

Let $f(x) = -2 \tan^{-1}(x + \tan(1/2)) + 1$, see Figure \ref{Fig:f_1}. In this case we are dealing with the case which motivated this work, dropping out the symmetry condition used originally in \cite{CDM}, but keeping the monotone one. The simulations ran show the appearance of the periodic orbit, and, in accordance with the study of the difference equation given in \cite{sharkovsky2}, its extreme values are determined by the points of period two of the nonlinearity, see Figure \ref{fig:Simulations_1}. Also, as $\varepsilon$ decreases we can see the convergence to the square-wave solution.

\begin{figure}[h!]
    \centering
    \begin{subfigure}{0.4\textwidth}
    \centering
    \includegraphics[scale=0.45]{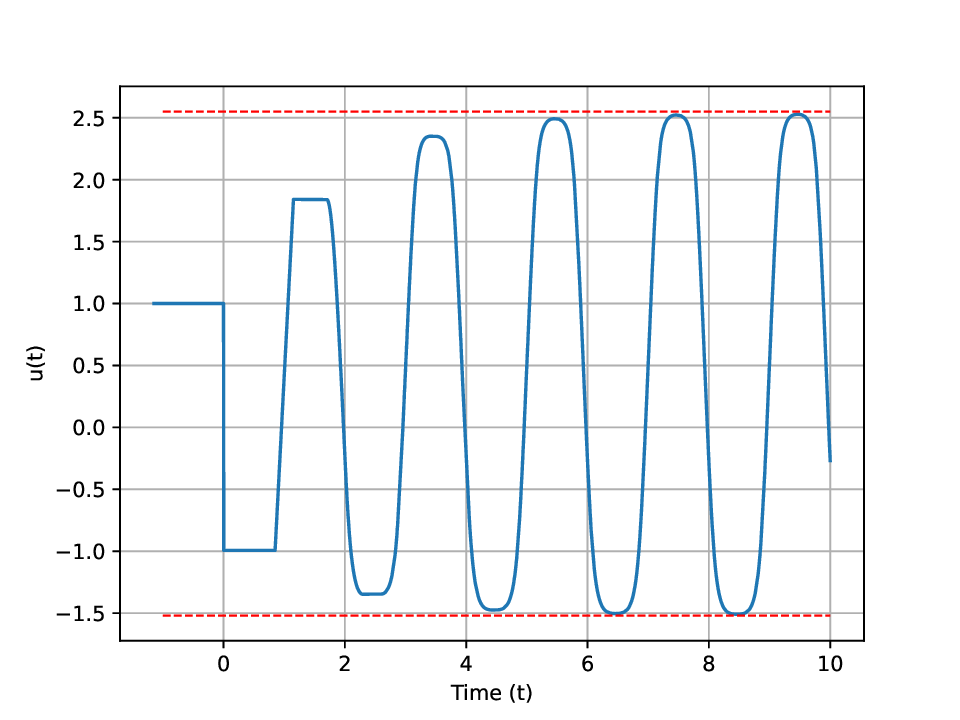}
    \caption{$\varepsilon=0.3$}
    \end{subfigure}
    ~
    \begin{subfigure}{0.4\textwidth}
    \includegraphics[scale=0.45]{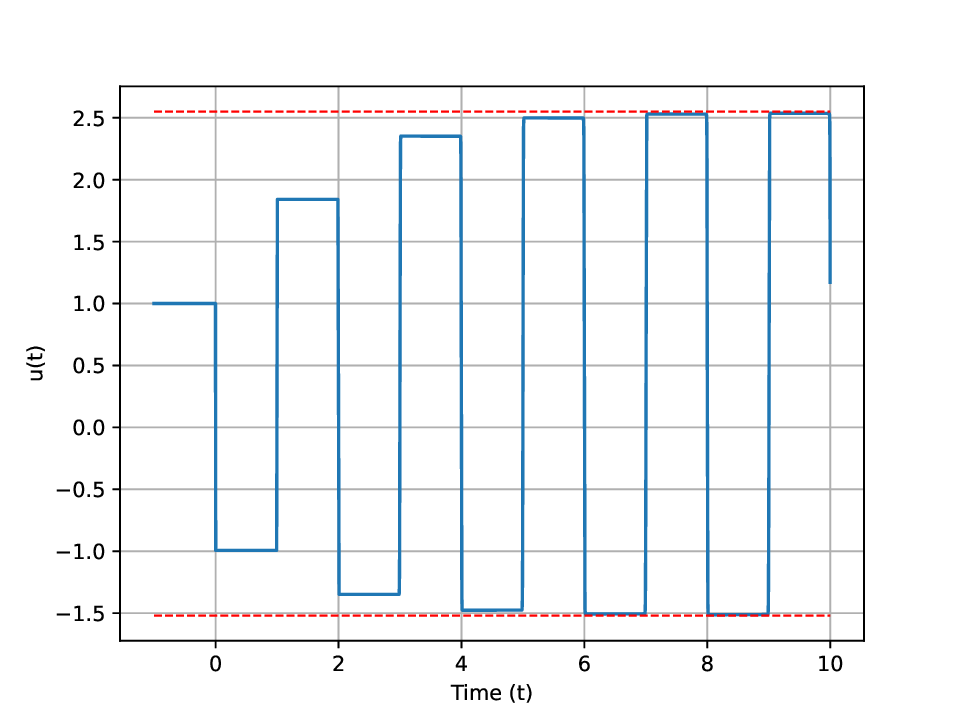}
    \caption{$\varepsilon=0.01$}
    \end{subfigure}
    \caption{Simulations (blue) for the case depicted in Figure \ref{Fig:f_1}, taking the constant function $u_0(t)=1$ as initial data. The solutions to the equation $f^2(x)=x$ are plotted in red.}
    \label{fig:Simulations_1}
\end{figure}

\subsection{A symmetric but non-monotone case}

Let
\[
    f(x) = \begin{cases}
        3, & x\leq -3, \\
        -x - 3 \sin\left( \frac{\pi x}{3} \right), & -3 \leq x \leq 3, \\
        3, & x\geq 3,
    \end{cases}
\]
see Figure \ref{Fig:f_2}. In this case, the function $f$ is non monotone which implies that $\max_{|x|\leq 3}|f(x)|> 3$. This particular example shows that, even though the square-wave solution is still determined by the period-two points of the function $f$, the convergence is not as regular as the one for the previous case. As we can see in Figure \ref{fig:Simulations_2}, a sort of Gibbs phenomenon is presented in the convergence towards the square-wave solution. This behavior was already observed by Mallet-Paret and Nussbaum for the singularly perturbed version of the difference equation, and they notice that it appears also in a non-monotone case. For more results and comments in this direction see \cite{nussbaum_mallet}.

\begin{figure}[h!]
    \centering
    \begin{subfigure}{0.4\textwidth}
    \centering
    \includegraphics[scale=0.45]{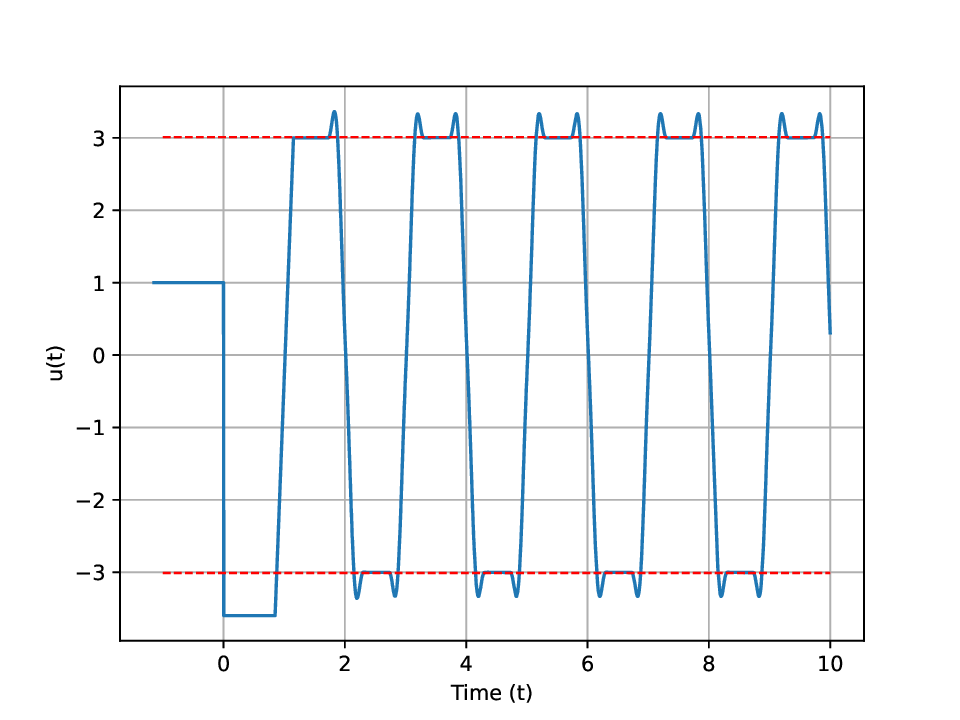}
    \caption{$\varepsilon=0.3$}
    \end{subfigure}
    ~
    \begin{subfigure}{0.4\textwidth}
    \centering
    \includegraphics[scale=0.45]{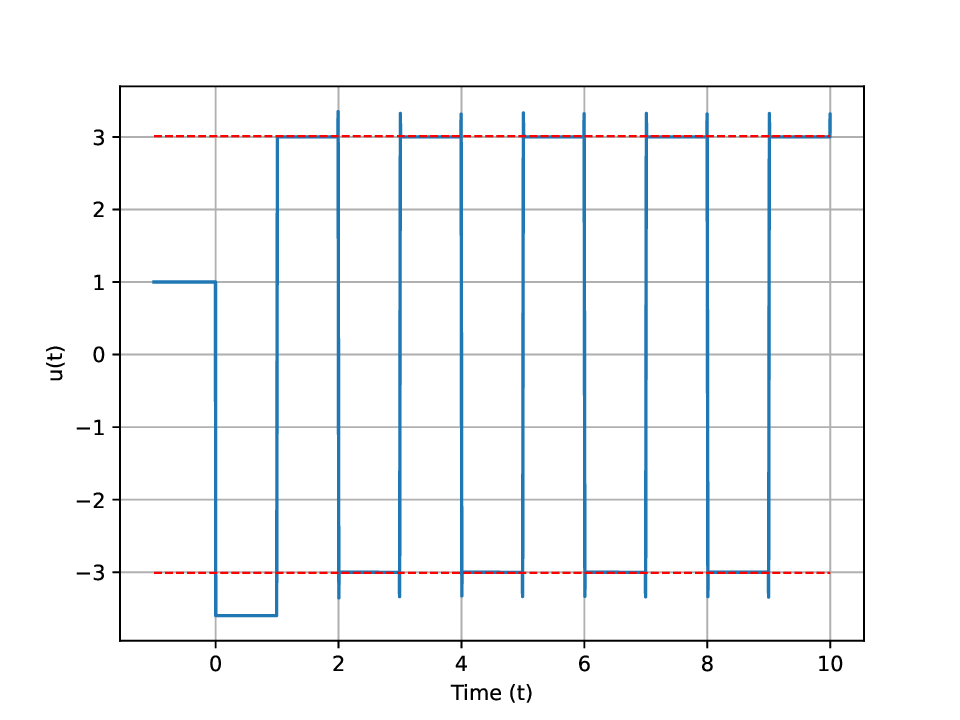}
    \caption{$\varepsilon=0.01$}
    \end{subfigure}
    \caption{Simulations (blue) for the case depicted in Figure \ref{Fig:f_2}, taking the constant function $u_0(t)=1$ as initial data. The solutions to the equation $f^2(x)=x$ are plotted in red.}
    \label{fig:Simulations_2}
\end{figure}

\subsection{A case with several intersections with the diagonal}

Let
\[
    f(x) = \begin{cases}
        3, & x\leq -3, \\
        -x- \sin\left( \frac{\pi x}{3} \right), & -3\leq x \leq 0, \\
        -x - \frac{1}{3} \sin\left( \pi x \right), & 0 \leq x \leq 3, \\
        3 & x\geq 3,
    \end{cases}
\]
see Figure \ref{Fig:f_3}. In this case we consider a non-symmetric function $f$ with several intersections with the diagonal. An a priori sight of the problem may led us to think these intersections will affect the behavior of the periodic orbit which arises. Nevertheless, again, this behavior is mainly determined by the period-two points this functions possesses, see Figure \ref{fig:simulations_3}. In this case, we recover a behavior similar to the one observed in the first example, and this is supported by the observation that this function $f$ has exactly two nontrivial solutions to the equation $f^2(x)=x$, see Figure \ref{fig:f_3-2}.

\begin{figure}[h!]
    \centering
    \begin{subfigure}{0.4\textwidth}
    \centering
    \includegraphics[scale=0.45]{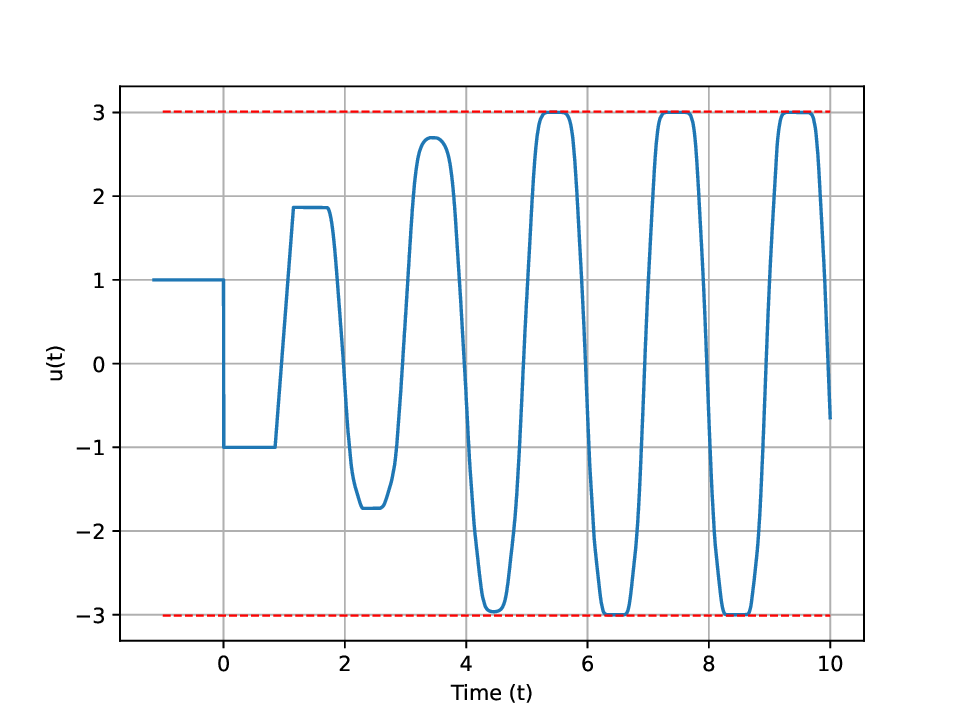}
    \caption{$\varepsilon=0.3$}
    \end{subfigure}
    ~
    \begin{subfigure}{0.4\textwidth}
    \centering
    \includegraphics[scale=0.45]{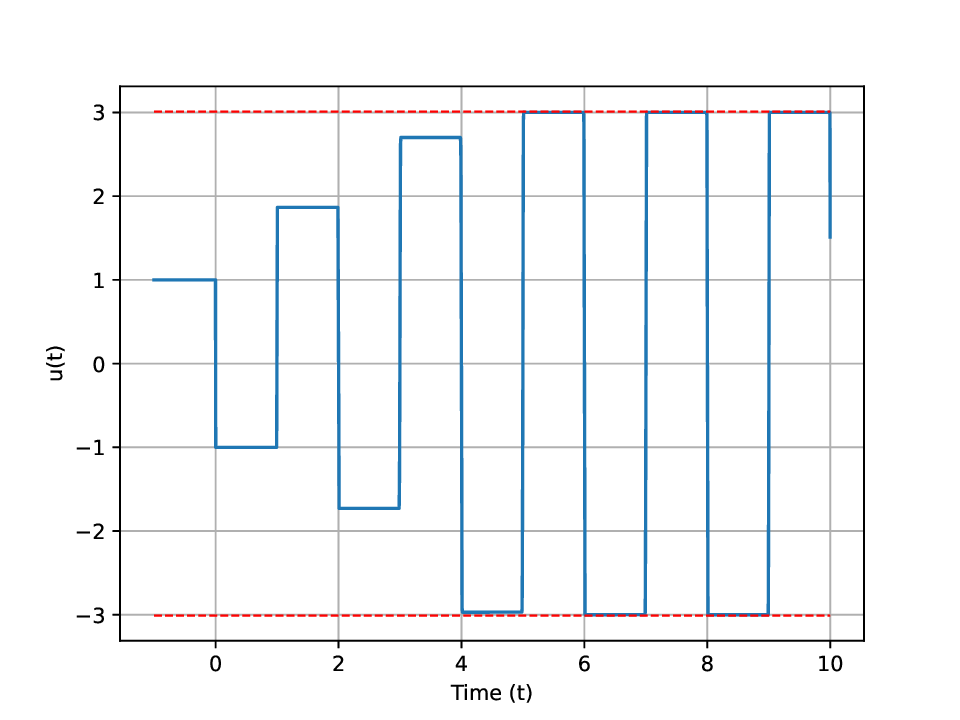}
    \caption{$\varepsilon=0.01$}
    \end{subfigure}
    \caption{Simulations (blue) for the case depicted in Figure \ref{Fig:f_3}, taking the constant function $u_0(t)=1$ as initial data. The solutions to the equation $f^2(x)=x$ are plotted in red.}
    \label{fig:simulations_3}
\end{figure}

\begin{figure}[h!]
    \centering
    \includegraphics[width=0.5\linewidth]{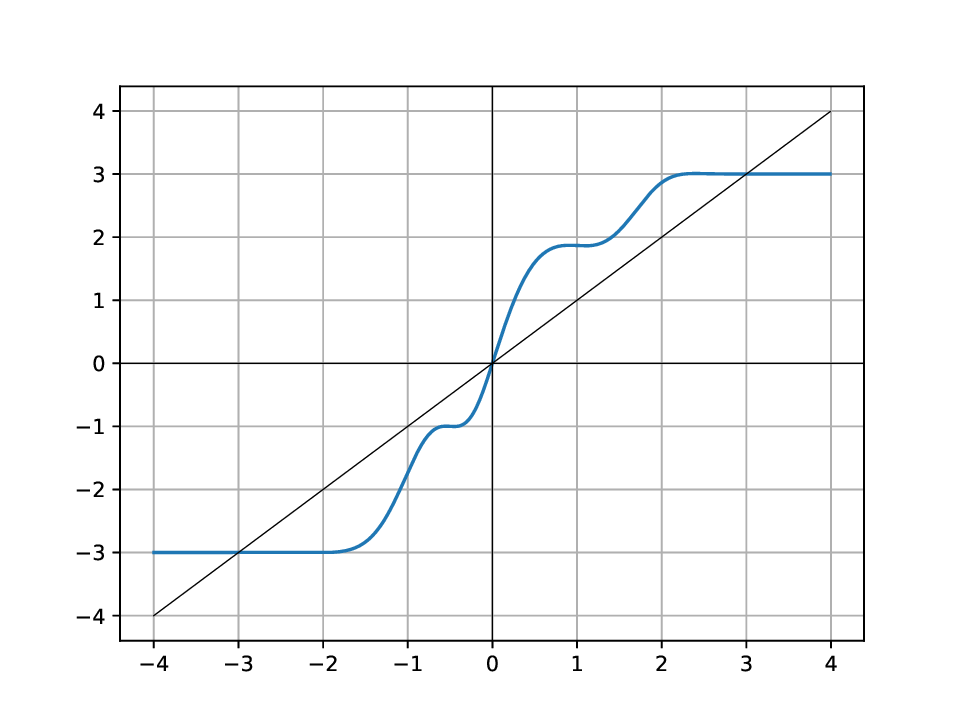}
    \caption{Graph of the function $f^2(x)$, with $f(x)$ as in Figure \ref{Fig:f_3}, compared with the diagonal $y=x$.}
    \label{fig:f_3-2}
\end{figure}

\bibliography{biblio.bib}
\bibliographystyle{abbrv}

\end{document}